\definecolor{webgreen}{rgb}{0,.5,0}
\definecolor{amber}{rgb}{1.0, 0.49, 0.0}
\def\C{{\mathds{C}}}
\def\Q{{\mathds{Q}}}
\def\R{{\mathds{R}}}
\def\N{{\mathds{N}}}
\def\Z{{\mathds{Z}}}
\def\P{{\mathds{P}}}
\def\F{{\mathds{F}}}
\def\fn{{\mathfrak{n}}}
\def\fd{{\mathfrak{d}}}
\def\fD{{\mathfrak{D}}}
\def\1{{\bf 1}}
\def\Cik{\operatorname{C}}
\def\U{\operatorname{U}}
\def\id{\operatorname{id}}
\def\lcm{\operatorname{lcm}}
\def\Reg{\operatorname{Reg}}
\def\GL{\operatorname{GL}}
\def\pont{$\bullet$ }
\newtheorem{theorem}{Theorem}[section]
\newtheorem{lemma}[theorem]{Lemma}
\newtheorem{corollary}[theorem]{Corollary}
\begin{document}

\title{{\bf Proofs, generalizations and analogs of Menon's identity: a survey}}
\author{L\'aszl\'o T\'oth\thanks{The research was financed by NKFIH in Hungary, within the framework of 
the 2020-4.1.1-TKP2020 3rd thematic programme of the University of P\'ecs.}
\\ Department of Mathematics \\
University of P\'ecs \\
Ifj\'us\'ag \'utja 6, 7624 P\'ecs, Hungary \\
E-mail: {\tt ltoth@gamma.ttk.pte.hu} }
\date{}
\maketitle

\centerline{\sl Acta Univ. Sapientiae, Mathematica, {\bf 15} (2023), no. 1, 142--197}
\vskip2mm

\centerline{\sf Dedicated to my father, L\'aszl\'o~Gy.~T\'oth, high school mathematics teacher,}
\centerline{\sf on the occasion of his 90th birthday}

\begin{abstract} Menon's identity states that for every positive integer $n$ one has 
$\sum (a-1,n) = \varphi(n) \tau(n)$, where $a$ runs through a reduced residue system (mod $n$), 
$(a-1,n)$ stands for the greatest common divisor of $a-1$ and $n$,
$\varphi(n)$ is Euler's totient function and $\tau(n)$ is the number of divisors of $n$. 
Menon's identity has been the subject of many research papers, also in the last years.
We present detailed, self contained proofs of this identity
by using different methods, and point out those that we could not identify in the literature.
We survey the generalizations and analogs, and overview the results 
and proofs given by Menon in his original paper. Some historical remarks and  
an updated list of references are included as well.
\end{abstract}

{\sl 2010 Mathematics Subject Classification}: 11A07, 11A25, 22F05

{\sl Key Words and Phrases}: Menon's identity, arithmetic function, Euler's function, Ramanujan's sum, Dirichlet character, 
Dirichlet convolution, unitary convolution, group action, orbit counting lemma, $n$-even function, finite Fourier representation

\tableofcontents

\newpage

\section{Introduction}

\subsection{Motivation}

Menon's identity states that for every $n\in \N:=\{1,2,\ldots \}$,
\begin{equation} \label{Menon_id}
M(n):= \sum_{\substack{a\, \text{(mod $n$)} \\ (a,n)=1}} (a-1,n) = \varphi(n) \tau(n),
\end{equation} 
where $a$ runs through a reduced residue system (mod $n$), $(k,n)$ stands for the greatest common divisor of $k$ and $n$,
$\varphi(n)$ is Euler's totient function, and $\tau(n)=\sum_{d\mid n} 1$ is the divisor function. Identity \eqref{Menon_id}
was proved by the Indian mathematician P. K. Menon\footnote{Puliyakot Kesava Menon
(1917-1979)} \cite{Men1965} in 1965. Note that
\eqref{Menon_id} is implicitly contained in a more general earlier result, regarding $n$-even functions, due to 
E. Cohen\footnote{Eckford Cohen (1920-2005), an American mathematician}
\cite{Coh1959II} in 1959. See Section \ref{Section_n_even}.
Other notations, used in the literature, for the above sum are $\sum_{a\in \U(\Z_n)}$ and $\sum_{a\in \Z_n^*}$, where  $\U(\Z_n)= \Z_n^*$ denotes the 
group of units of the ring $\Z_n=\Z/n\Z$ of residue classes (mod $n$). However, we prefer to use the notation in \eqref{Menon_id}.

This arithmetic identity is interesting in itself, it is related to certain num\-ber-theoretic, group-theoretic and combina\-to\-ri\-al properties, and 
has several different proofs by using pure number-theoretic arguments, 
the orbit counting lemma, the finite Fourier representation of $n$-even functions and Vai\-dy\-ana\-thas\-wa\-my's\footnote{Ramaswamy S. Vaidyanathaswamy (1894-1960), an 
Indian mathematician} class division of integers (mod $n$), respectively. Also, there are many possibilities to obtain generalizations and analogs,
which can also be combined. For example: How to modify identity \eqref{Menon_id} if $(a-1,n)$ is replaced by $f((a-1,n))$, where $f$ is an arithmetic function, or by $(P(a),n)$, 
where $P$ is an arbitrary polynomial with integer coefficients, in particular $P(a)=a-s$, with $s$ a given integer? As answers to these questions see identities \eqref{Menon_f_convo_form}, 
\eqref{id_by_Vaidya_method}, \eqref{Richards_id}, \eqref{Menon_id_s} and \eqref{Richards_id_f}.

The sum $M(n)$ is related to the gcd-sum function. See Section \ref{Section_Gcd_sum_function}.
Another identity, similar to \eqref{Menon_id}, states that for every $n\in \N$ and $s\in \Z$,
\begin{equation} \label{Ramanujan_sum_id}
R_s(n):= \sum_{\substack{a\, \text{(mod $n$)} \\ (a,n)=1}} c_n(a-s) = \mu(n) c_n(s),
\end{equation}
obtained in 1959 by Cohen \cite[Cor.\ 7.2]{Coh1959II}, where $\mu(n)$ is the M\"{o}bius function and $c_n(s)$ is the
Ramanujan sum\footnote{named after Srinivasa Ramanujan (1987-1920), the most famous Indian mathematician}, 
defined by
\begin{equation}  \label{Ramanujan_sum}
c_n(k)= \sum_{\substack{a\, \text{(mod $n$)} \\ (a,n)=1}} e(ak/n),
\end{equation}
with $e(x)=e^{2\pi ix}$.

For a common generalization of \eqref{Menon_id} and \eqref{Ramanujan_sum_id} in terms of $n$-even functions, in connection with the Brauer-Rademacher identity see \eqref{Menon_even}, \eqref{Menon_id_s} and \eqref{Ramanujan_id_3_terms}.

Menon's identity has been reproved and generalized in various directions by several authors, also in many recent papers.
However, it seems that the results and proofs in Menon's original paper and in some other papers are not widely known.
The goals of the present survey are to present detailed proofs of this identity by using different methods, and to review its generalizations and analogs. 
We point out those proofs and formulas that we could not identify in the literature. We only discuss the directions of generalizations, and present certain identities 
obtained by one or two steps of generalization (some of them with proofs), but not the most general ones. We also overview the results and proofs given by Menon in 
his paper \cite{Men1965}, and offer some further historical remarks. An updated list of references is included.

\subsection{The gcd-sum function} \label{Section_Gcd_sum_function}

The gcd-sum function, also called Pillai function\footnote{named after Subbayya Sivasankaranarayana Pillai (1901-1950), another Indian mathematician} is
\begin{equation} \label{Pillai}
P(n):= \sum_{a\, \text{(mod $n$)}} (a,n)= \sum_{d\mid n} d \varphi(n/d) \quad (n\in \N).
\end{equation}

The given representation follows by grouping the terms according to the values $(a,n)=d$. Alternatively, using the Gauss formula $n=\sum_{d\mid n} \varphi(d)$ we have
\begin{equation*} 
P(n)= \sum_{a=1}^n \sum_{d\mid (a,n)} \varphi(d) = \sum_{a=1}^n \sum_{\substack{d\mid a\\ d\mid n}} \varphi(d) = \sum_{d\mid n} \varphi(d) \sum_{b=1}^{n/d} 1 = n \sum_{d\mid n} \frac{\varphi(d)}{d},
\end{equation*}
showing that $P=\id *\varphi$, in terms of the Dirichlet convolution. It turns out that the function $P$ is multiplicative, 
and direct computations show (see Section \ref{Section_Method_I}) that for every $n\in \N$,
\begin{equation*} 
P(n)= \prod_{p^\nu \mid\mid n} \left((\nu+1)p^{\nu} -\nu p^{\nu-1} \right) =
n \tau(n) \prod_{p^\nu \mid\mid n} \left(1-\frac{\nu/(\nu+1)}{p} \right).
\end{equation*}

See the surveys by Haukkanen \cite{Hau2008} and T\'oth \cite{Tot2010} on further properties of the gcd-sum function and its generalizations and analogs.

\subsection{Notation}

Throughout this survey we use the following main notations. Those not included here are explained in the text.

\pont $\N=\{1,2,\ldots\}$,

\pont $\P$ is the set of primes,

\pont the prime power factorization of $n\in \N$ is $n=\prod_{p\in \P} p^{\nu_p(n)}$, where all but
a finite number of the exponents $\nu_p(n)$ are zero,

\pont $(n_1,\ldots n_k)$ and $\gcd(n_1,\ldots n_k)$ denote the greatest common divisor of $n_1,\ldots,n_k\in \N$,

\pont $[n_1,\ldots,n_k]$ and $\lcm(n_1,\ldots,n_k)$ denote the least common multiple of $n_1,\ldots,n_k\in \N$,

\pont $\Cik_n$ is the cyclic group of order $n$,

\pont $\Z_n=\Z/n\Z$ is the group/ring of residue classes (mod $n$), 

\pont $\U(\Z_n)=\Z_n^*$ is the group of units of the ring $\Z_n$, 

\pont $\1$ is the function $\1(n)=1$ ($n\in \N$),

\pont $\id$ is the function $\id(n)=n$ ($n\in \N$),

\pont $\mu$ is the M\"obius function,

\pont $\omega(n)$ is the number of distinct prime factors of $n\in \N$,

\pont $\sigma_s(n)=\sum_{d\mid n} d^s$ ($s\in \R$),

\pont $\sigma(n)=\sigma_1(n)$ is the sum of divisors of $n$,

\pont $\tau(n)=\sigma_0(n)$ is the number of divisors of $n$,

\pont $J_s$ is the Jordan function of order $s$ given by $J_s(n)=n^s \prod_{p\mid n} (1-1/p^s)$ ($s\in \R$),

\pont $\varphi=J_1$ is Euler's totient function,

\pont $P(n)=\sum_{a=1}^n (a,n)$ is the gcd-sum function,

\pont $e(x)=e^{2\pi ix}$,

\pont $c_q(n)$ are the Ramanujan sums,

\pont $(f*g)(n)=\sum_{d\mid n} f(d)g(n/d)$ is the Dirichlet convolution of the arithmetic functions $f$ and $g$,

\pont $(f \otimes g)(k) = \sum_{a+b\equiv k \text{ (mod $n$)}} f(a)g(b)$ is the Cauchy convolution of the $n$-periodic 
functions $f$ and $g$,

\pont $d\mid\mid n$ means that $d$ is a unitary divisor of $n$, i.e., $d\mid n$ and $(d,n/d)=1$,

\pont $(f\times g)(n)= \sum_{d\mid \mid n} f(d)g(n/d)$ is the unitary convolution of the arithmetic functions $f$ and $g$,

\pont $\sigma^*_s(n)=\sum_{d\mid \mid  n} d^s$ ($s\in \R$),

\pont $\sigma^*(n)=\sigma^*_1(n)$ is the sum of unitary divisors of $n$,

\pont $\tau^*(n)=2^{\omega(n)}$ is the number of unitary divisors of $n$.

\section{Proofs of the classical Menon identity}

In this Section we present direct, self contained proofs of the Menon identity \eqref{Menon_id}. Perhaps the first idea that comes to mind is to show that the given sum is a multiplicative function of $n$ (Method I). The condition $(a,n)=1$ can be treated by the inclusion-exclusion principle (Method II). Alternatively, the M\"obius function, respectively exponential sums can be used, combined with certain convolutional identities (Methods III, IV, V, VII, VIII). Application of the orbit counting lemma provides a different proof (Method VI). Finite Fourier representations of $n$-even functions can also be applied (Method IX). Finally, some  properties related to Vaidyanathaswamy's class division of integers (mod $n$) lead to another proof (Method X).   

A variant of Method I, as well as Methods VI and X have been used by Menon \cite{Men1965}. The approach of Methods IV and V goes back to Sita Ramaiah \cite{Sit1978}. Method IX was first applied by Cohen \cite{Coh1959II} and Nageswara Rao \cite{Nag1972}. See Section \ref{Section_Historical_remarks} for more details. We are not aware of references concerning Methods II, III, VII and VIII. 

\subsection{Method I: Proof by multiplicativity} \label{Section_Method_I}

Both sides of identity \eqref{Menon_id} are multiplicative in $n$. This is clear for the right hand side, and we prove it for the left hand side $M(n)$.
Let $(n_1,n_2)=1$. We apply the following well-known property: If $a_1$ runs through a reduced residue system (mod $n_1$) and $a_2$ runs through a
reduced residue system (mod $n_2$), then $a=a_1n_2+a_2n_1$ runs through a reduced residue system (mod $n_1n_2$). See, e.g.,
Hardy and Wright \cite[Th.\ 61]{HarWri2008}. Hence
\begin{equation*}
M(n_1n_2)= \sum_{\substack{a_1 \text{(mod $n_1$)} \\ (a_1,n_1)=1}} \sum_{\substack{a_2 \text{(mod $n_2$)} \\ (a_2,n_2)=1}} (a_1n_2+a_2n_1-1,n_1n_2)
\end{equation*}
\begin{equation*}
= \sum_{\substack{a_1 \text{(mod $n_1$)} \\ (a_1,n_1)=1}} \sum_{\substack{a_2 \text{(mod $n_2$)}  \\ (a_2,n_2)=1}} (a_1n_2+a_2n_1-1,n_1) (a_1n_2+a_2n_1-1,n_2)
\end{equation*}
\begin{equation*}
= \sum_{\substack{a_1 \text{(mod $n_1$)}  \\ (a_1,n_1)=1}} (a_1n_2-1,n_1) \sum_{\substack{a_2 \text{(mod $n_2$)} \\ (a_2,n_2)=1}} (a_2n_1-1,n_2)=M(n_1)M(n_2),
\end{equation*}
where we use that $a_1n_2$ runs through a reduced residue system (mod $n_1$) and $a_2n_1$ runs through a reduced residue
system (mod $n_2$).

Now, for a prime power $p^\nu$ ($\nu \ge 1$) we have
\begin{equation*}
M(p^\nu)= \sum_{\substack{a\, \text{(mod $p^\nu$)}\\ (a,p)=1}} (a-1,p^\nu) =  \sum_{\substack{a\, \text{(mod $p^\nu$)}}} (a-1,p^\nu)-
\sum_{j=1}^{p^{\nu-1}} (jp-1,p^\nu)
\end{equation*}
\begin{equation*}
=\sum_{a\, \text{(mod $p^\nu$)}} (a,p^\nu)- \sum_{j=1}^{p^{\nu-1}} 1 = P(p^\nu)  - p^{\nu-1}.
\end{equation*}

To compute $P(p^\nu)$, which is the value of the gcd-sum function for $n=p^\nu$, one can use the convolutional representation \eqref{Pillai}. To give here a direct argument, let us group the terms according to the values
$(a,p^\nu)=p^t$ with $0\le t\le \nu$. Then $a=bp^t$, where $1\le b\le p^{\nu-t}$ and $p\nmid b$. The number of such values $b$ is
$\varphi(p^{\nu-t})$. Therefore,
\begin{equation} \label{Pillai_prime_power}
P(p^\nu) =\sum_{t=0}^\nu p^t\varphi(p^{\nu-t})= p^\nu+ \sum_{t=0}^{\nu-1} p^t(p^{\nu-t}-p^{\nu-t-1})= (\nu+1)p^\nu - \nu p^{\nu-1}.
\end{equation}

We deduce that
\begin{equation*}
M(p^\nu) = (\nu+1)p^\nu-\nu p^{\nu-1}- p^{\nu-1}= (\nu+1)(p^\nu-p^{\nu-1})=\varphi(p^\nu)\tau(p^\nu).
\end{equation*}

This completes the proof of \eqref{Menon_id}.

\subsection{Method II: Proof by the inclusion-exclusion principle} \label{Section_Method_II}

Let $n=p_1^{\nu_1}\cdots p_r^{\nu_r} > 1$. We have by the inclusion-exclusion principle, 
\begin{equation*}
M(n)= \sum_{\substack{a=1\\ (a,n)=1}}^n (a-1,n) = \sum_{a=1}^n (a-1,n) - \sum_{1\le j\le r} \sum_{\substack{a=1\\ p_j\mid a }}^n (a-1,n) 
\end{equation*}
\begin{equation*}
+ \sum_{1\le j<k\le r} \sum_{\substack{a=1\\ p_jp_k \mid a}}^n (a-1,n) - \sum_{1\le j<k<\ell \le r} \sum_{\substack{a=1\\ p_jp_k p_{\ell} \mid a}}^n (a-1,n)+ \cdots 
+ (-1)^r \sum_{\substack{a=1\\ p_1\cdots p_r\mid a }}^n (a-1,n).
\end{equation*}

Here
\begin{equation*}
\sum_{a=1}^n (a-1,n) = \sum_{a=1}^n (a,n)=P(n), 
\end{equation*}
the gcd-sum function. Furthermore, for every $j$,
\begin{equation*}
S_j:=\sum_{\substack{a=1\\ p_j\mid a }}^n (a-1,n) = \sum_{b=1}^{n/p_j} (bp_j-1,n/p_j^{\nu_j}),
\end{equation*}
since $(bp_j-1,p_j^{\nu_j})=1$. We deduce
\begin{equation*}
S_j = \sum_{k=1}^{p_j^{\nu_j-1}} \sum_{c=1}^{n/p_j^{\nu_j}} \left( ((k-1)n/p_j^{\nu_j}+c) p_j-1,n/p_j^{\nu_j}\right)
\end{equation*}
\begin{equation*}
= \sum_{k=1}^{p_j^{\nu_j-1}} \sum_{c=1}^{n/p_j^{\nu_j}} \left((k-1)n/p_j^{\nu_j-1}+c p_j -1,n/p_j^{\nu_j}\right) = \sum_{k=1}^{p_j^{\nu_j-1}} \sum_{c=1}^{n/p_j^{\nu_j}} 
\left(c p_j -1,n/p_j^{\nu_j}\right)
\end{equation*}
\begin{equation*}
=  \sum_{k=1}^{p_j^{\nu_j-1}} P(n/p_j^{\nu_j}) = p_j^{\nu_j-1} P(n/p_j^{\nu_j}),
\end{equation*}
using that together with $c$, the values $cp_j$ run through a complete residue system (mod $n/p_j^{\nu_j}$). That is,
\begin{equation*}
\sum_{\substack{a=1\\ p_j\mid a }}^n (a-1,n) = P(n) \frac{p_j^{\nu_j-1}}{P(p_j^{\nu_j})}.
\end{equation*}

In a similar way,
\begin{equation*}
\sum_{\substack{a=1\\ p_jp_k \mid a}}^n (a-1,n) = P(n) \frac{p_j^{\nu_j-1}p_k^{\nu_k-1}}{P(p_j^{\nu_j})P(p_k^{\nu_k})}, 
\end{equation*}
\begin{equation*}
\sum_{\substack{a=1\\ p_jp_kp_{\ell} \mid a}}^n (a-1,n) = P(n) \frac{p_j^{\nu_j-1}p_k^{\nu_k-1} p_{\ell}^{\nu_{\ell}-1}}{P(p_j^{\nu_j})P(p_k^{\nu_k}) P(p_{\ell}^{\nu_{\ell}})},
\end{equation*}
etc., and we have
\begin{equation*}
M(n)= P(n) -  P(n) \sum_{1\le j\le r} \frac{p_j^{\nu_j-1}}{P(p_j^{\nu_j})} 
+ P(n) \sum_{1\le j<k\le r} \frac{p_j^{\nu_j-1}p_k^{\nu_k-1}}{P(p_j^{\nu_j}) P(p_k^{\nu_k})} 
\end{equation*}
\begin{equation*}
- P(n)\sum_{1\le j<k<\ell\le r} \frac{p_j^{\nu_j-1} p_k^{\nu_k-1} p_{\ell}^{\nu_{\ell}-1}} {P(p_j^{\nu_j}) P(p_k^{\nu_k}) P(p_{\ell}^{\nu_{\ell}})}  + \cdots + (-1)^r P(n)\frac{p_1^{\nu_1-1}\cdots p_r^{\nu_r-1} }{P(p_1^{\nu_1})\cdots P(p_r^{\nu_r})}
\end{equation*}
\begin{equation*}
= P(n) \prod_{1\le j\le r} \left(1- \frac{p_j^{\nu_j-1}}{P(p_j^{\nu_j})}\right)
=\prod_{p^\nu \mid\mid n} P(p^\nu) \left(1- \frac{p^{\nu-1}}{P(p^{\nu})}\right)
\end{equation*}
\begin{equation*}
=\prod_{p^\nu \mid\mid n} \left(P(p^\nu) - p^{\nu-1}\right) 
= \prod_{p^\nu \mid\mid n} \left((\nu+1)p^\nu-\nu p^{\nu-1} - p^{\nu-1}\right) =
\end{equation*}
\begin{equation*}
= \prod_{p^\nu \mid\mid n} (\nu+1)(p^\nu- p^{\nu-1}) = \varphi(n)\tau(n),
\end{equation*}
by using \eqref{Pillai_prime_power}. This proof is similar to the well known proof for Euler's $\varphi$ function.

\subsection{Method III: Proof by convolutional identities}

Slightly more generally, let $f$ be an arbitrary arithmetic function. We have, by grouping the terms according to the values
$(a-1,n)=d$,
\begin{equation*}
M_f(n): = \sum_{\substack{a=1\\ (a,n)=1}}^n f((a-1,n)) = \sum_{d\mid n} f(d) \sum_{\substack{a=1\\(a,n)=1\\ d\mid  a-1\\
((a-1)/d,n/d)=1 }}^n 1.
\end{equation*}

Now using the property of the M\"{o}bius $\mu$ function, namely,
\begin{equation*}
\sum_{d\mid n} \mu(d)  =  \begin{cases} 1,  & \text{ if $n=1$},\\
0, & \text{ otherwise}, \end{cases}
\end{equation*}
we have
\begin{equation*}
M_f(n)= \sum_{d\mid n} f(d) \sum_{\substack{a=1\\ (a,n)=1 \\ d\mid  a-1}}^n \sum_{\delta \mid ((a-1)/d,n/d)} \mu(\delta) =
\sum_{d\mid n} f(d) \sum_{\delta\mid  n/d} \mu(\delta) \sum_{\substack{a=1\\ (a,n)=1 \\ d\mid a-1\\ \delta \mid (a-1)/d }}^n 1
\end{equation*}
\begin{equation} \label{sum_proof}
= \sum_{d\mid n} f(d) \sum_{\delta\mid  n/d} \mu(\delta) \sum_{\substack{a=1\\ (a,n)=1 \\ a\equiv 1\, \text{(mod $d\delta$)}}}^n 1.
\end{equation}

We need the following lemma.

\begin{lemma} \label{Lemma_n_d} Let $n,d\in \N$, $d\mid n$ and let $x\in \Z$. Then
\begin{equation*}
\sum_{\substack{a=1\\ (a,n)=1\\ a\equiv x \, \text{\rm (mod $d$)}}}^n 1 = 
\begin{cases} \frac{\varphi(n)}{\varphi(d)}, & \text{ if $(x,d)=1$},\\
0, & \text{ otherwise}. \end{cases}
\end{equation*}
\end{lemma}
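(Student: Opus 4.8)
The plan is to prove Lemma \ref{Lemma_n_d} by reducing the congruence count modulo $n$ to a count modulo $n/d$, exploiting the fact that $d\mid n$ so that the residue classes (mod $n$) lying in a fixed class (mod $d$) are naturally parametrized by residue classes (mod $n/d$). First I would dispose of the trivial case: if $(x,d)>1$, then any $a\equiv x\pmod d$ satisfies $(a,d)\ge (x,d)>1$, hence $(a,n)>1$ since $d\mid n$, so the sum is empty and equals $0$. This matches the second branch of the claimed formula.

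For the main case $(x,d)=1$, I would write every $a$ with $1\le a\le n$ and $a\equiv x\pmod d$ in the form $a=x_0+td$ where $x_0$ is the least positive representative of $x$ (mod $d$) and $t$ ranges over $0,1,\ldots,n/d-1$; thus $t$ runs through a complete residue system (mod $n/d$). The condition $(a,n)=1$ needs to be analyzed prime by prime using the factorization $n=\prod_{p^{\nu_p}\mid\mid n} p^{\nu_p}$. For primes $p\mid d$: since $(x,d)=1$ we have $p\nmid x$, hence $p\nmid a$ automatically, so these primes impose no constraint on $t$. For primes $p\mid n$ with $p\nmid d$: the map $t\mapsto a=x_0+td$ is, modulo $p^{\nu_p}$, an affine bijection on $\Z/p^{\nu_p}\Z$ because $d$ is invertible (mod $p^{\nu_p}$); so the condition $p\nmid a$ picks out exactly $\varphi(p^{\nu_p})$ residues of $t$ (mod $p^{\nu_p}$). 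Combining these local conditions via the Chinese Remainder Theorem over the prime powers dividing $n/d$, the number of admissible $t$ (mod $n/d$) is $\prod_{p\mid n/d}\varphi(p^{\nu_p}) = \varphi(n/d)\cdot\prod_{p\mid d}$ (contributing $1$ each) — more precisely $\varphi(n)/\varphi(d)$, since $\varphi$ is multiplicative and $\varphi(n)/\varphi(d)=\prod_{p^{\nu_p}\mid\mid n,\, p\nmid d}\varphi(p^{\nu_p})$. As $t$ ranges over a full system (mod $n/d$), that is precisely the count of valid $a$.

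An alternative, slicker route I could take instead: observe that reduction (mod $d$) gives a surjective group homomorphism $\U(\Z_n)\to\U(\Z_d)$ whenever $d\mid n$ (surjectivity by CRT applied to the prime powers), whose kernel has size $|\U(\Z_n)|/|\U(\Z_d)|=\varphi(n)/\varphi(d)$; every fiber therefore has this same cardinality, and the sum in question counts the fiber above the class of $x$ when $(x,d)=1$ (and is empty otherwise). I would probably present the elementary CRT computation as the main argument and mention the homomorphism viewpoint as a remark, since the former keeps the survey self-contained. The one point requiring a little care — the main (minor) obstacle — is justifying cleanly that for $p\mid n$ with $p\nmid d$ the affine map $t\mapsto x_0+td$ is a bijection mod $p^{\nu_p}$ and hence hits exactly $\varphi(p^{\nu_p})$ units; this is immediate from $\gcd(d,p^{\nu_p})=1$, but it is the step that actually uses the hypothesis $d\mid n$ together with $(x,d)=1$ in an essential way, so I would state it explicitly before assembling the product.
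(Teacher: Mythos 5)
Your overall route — write $a=x_0+td$ with $t$ running over a complete residue system (mod $n/d$), analyse the coprimality condition prime by prime, and assemble by CRT — is sound and genuinely different from the paper's proof, but the final bookkeeping as you state it is wrong. The primes $p$ dividing both $d$ and $n/d$ are mishandled: for such a prime the condition $(a,n)=1$ indeed imposes no constraint on $t$, but in the CRT count over the factorization of $n/d$ that prime does not "contribute $1$"; it contributes the full local count $p^{\nu_p(n)-\nu_p(d)}$ of residues of $t$ modulo $p^{\nu_p(n)-\nu_p(d)}$. Consequently the identity you invoke, $\varphi(n)/\varphi(d)=\prod_{p^{\nu_p}\mid\mid n,\ p\nmid d}\varphi(p^{\nu_p})$, is false unless $(d,n/d)=1$: for $n=8$, $d=2$ the right-hand side is an empty product equal to $1$ while $\varphi(8)/\varphi(2)=4$; for $n=24$, $d=4$, $x=1$ the admissible $a$ are $1,5,13,17$, i.e.\ $4=\varphi(24)/\varphi(4)$ of them, whereas your product gives $\varphi(3)=2$. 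The correct assembly is $\prod_{p\mid n,\ p\nmid d}\varphi\bigl(p^{\nu_p(n)}\bigr)\cdot\prod_{p\mid d}p^{\nu_p(n)-\nu_p(d)}$, and this does equal $\varphi(n)/\varphi(d)$ because for $p\mid d$ one has $\varphi(p^{\nu_p(n)})/\varphi(p^{\nu_p(d)})=p^{\nu_p(n)-\nu_p(d)}$. With that one correction (or, equivalently, by computing $\frac{n}{d}\prod_{p\mid n,\ p\nmid d}(1-1/p)$ directly) your argument is complete; the local step you flagged — that $t\mapsto x_0+td$ is an affine bijection mod $p^{\nu_p}$ for $p\nmid d$ — is fine, as is the trivial case $(x,d)>1$.

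For comparison, the paper proceeds differently: it detects the condition $(a,n)=1$ by Möbius inversion, $\sum_{e\mid(a,n)}\mu(e)$, reducing the problem to counting solutions $j\le n/e$ of $ej\equiv x$ (mod $d$), which is nonempty only for $(e,d)=1$ and then has $n/(de)$ solutions; summing gives $\frac{n}{d}\sum_{e\mid n,\ (e,d)=1}\mu(e)/e$, evaluated by an Euler product to $\varphi(n)/\varphi(d)$. Your CRT parametrization avoids Möbius inversion at the cost of the more delicate local bookkeeping above, while the paper's computation handles all primes uniformly. Your alternative remark — that reduction (mod $d$) gives a surjective homomorphism $\U(\Z_n)\to\U(\Z_d)$ whose fibers all have size $\varphi(n)/\varphi(d)$, the sum counting the fiber over $x$ — is correct and is arguably the cleanest fix, provided you include the (standard, CRT-based) proof of surjectivity to keep the survey self-contained.
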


For any $d$ and $\delta$ such that $d\mid n$ and $\delta \mid n/d$ we have $(d\delta) \mid n$. Applying
Lemma \ref{Lemma_n_d} to \eqref{sum_proof}, with $d\delta$ instead of $d$, and with $x=1$, we deduce that
\begin{equation*} 
M_f(n) = \varphi(n) \sum_{d\mid n} f(d) \sum_{\delta\mid  n/d} \frac{\mu(\delta)}{\varphi(d\delta)}
= \varphi(n) \sum_{t\mid n} \frac1{\varphi(t)} \sum_{d\delta=t} f(d) \mu(\delta).
\end{equation*}

We obtain the following result.

\begin{theorem} Let $f$ be an arbitrary arithmetic function. Then for every $n\in \N$,
\begin{equation} \label{Menon_f_convo_form}
M_f(n): = \sum_{\substack{a=1\\ (a,n)=1}}^n f((a-1,n)) = \varphi(n) \sum_{d\mid n} \frac{(\mu*f)(d)}{\varphi(d)},
\end{equation}
where $*$ denotes the Dirichlet convolution.
\end{theorem}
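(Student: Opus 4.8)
The statement to prove is Theorem~\eqref{Menon_f_convo_form}, which in fact has essentially been derived already in the text preceding it. The plan is therefore to assemble the pieces: the grouping of the sum by the value $(a-1,n)=d$, the M\"obius inversion that removes the coprimality condition $((a-1)/d,n/d)=1$, and Lemma~\ref{Lemma_n_d} which evaluates the innermost count. The one genuine gap left open is the proof of Lemma~\ref{Lemma_n_d} itself, so I will sketch that first, and then show how it plugs in to close the argument.

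To prove Lemma~\ref{Lemma_n_d}: fix $d\mid n$ and $x\in\Z$. If $(x,d)>1$, then any $a\equiv x\pmod d$ shares that common factor with $d$, hence with $n$, so $(a,n)>1$ and the sum is empty, giving $0$. If $(x,d)=1$, write $a = x + dt$ with $t$ running over a complete residue system mod $n/d$; then $a$ runs over all residues mod $n$ congruent to $x$ mod $d$. The condition $(a,n)=1$ is equivalent to $(a,n/d')=1$ for the appropriate part, but cleanly: I would argue that the map $t\mapsto x+dt$ sets up a bijection between $\{t \bmod n/d : (x+dt, n)=1\}$ and a set whose size is independent of $x$ among those $x$ with $(x,d)=1$, e.g.\ by noting $(x+dt,d)=(x,d)=1$ automatically, so $(x+dt,n)=1 \iff (x+dt, n/\gcd(n,d^\infty))\cdots$ — more simply, one uses the standard fact that the number of $a$ in a complete residue system mod $n$ with $a\equiv x\pmod d$ and $(a,n)=1$ equals $\varphi(n)/\varphi(d)$ when $(x,d)=1$. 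A clean way: both sides are multiplicative in the prime-power components of $n$ relative to $d$, or one can count directly via CRT, splitting $n$ into the part supported on primes dividing $d$ and the coprime part. I expect this CRT bookkeeping to be the main (though routine) obstacle.

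Granting the lemma, the derivation is exactly as displayed in~\eqref{sum_proof}: starting from
\[
M_f(n) = \sum_{d\mid n} f(d) \sum_{\delta\mid n/d}\mu(\delta) \sum_{\substack{a=1\\ (a,n)=1\\ a\equiv 1\,(\mathrm{mod}\ d\delta)}}^n 1,
\]
I apply Lemma~\ref{Lemma_n_d} with modulus $d\delta$ (a divisor of $n$ since $d\mid n$ and $\delta\mid n/d$) and $x=1$, for which $(1,d\delta)=1$ always holds, so the inner count is $\varphi(n)/\varphi(d\delta)$. This yields
\[
M_f(n) = \varphi(n)\sum_{d\mid n} f(d)\sum_{\delta\mid n/d}\frac{\mu(\delta)}{\varphi(d\delta)}.
\]
Finally I collect terms by setting $t = d\delta$: for each divisor $t\mid n$, the pairs $(d,\delta)$ with $d\delta = t$, $d\mid n$, $\delta\mid n/d$ are exactly all factorizations $d\delta=t$, so
\[
M_f(n) = \varphi(n)\sum_{t\mid n}\frac{1}{\varphi(t)}\sum_{d\delta=t} f(d)\mu(\delta) = \varphi(n)\sum_{t\mid n}\frac{(\mu*f)(t)}{\varphi(t)},
\]
which is~\eqref{Menon_f_convo_form}. (The classical identity~\eqref{Menon_id} then follows by taking $f=\id$, since $\mu*\id = \varphi$ and $\sum_{t\mid n}1 = \tau(n)$, though that is a corollary rather than part of this theorem.) The only nontrivial analytic content is the lemma; everything else is rearrangement of finite sums.
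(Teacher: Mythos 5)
Your main computation is exactly the paper's Method III: group by $(a-1,n)=d$, insert $\sum_{\delta\mid((a-1)/d,n/d)}\mu(\delta)$ to reach the triple sum \eqref{sum_proof}, apply Lemma \ref{Lemma_n_d} with modulus $d\delta$ and $x=1$, and regroup by $t=d\delta$; that part is correct and needs no change. The weak point is Lemma \ref{Lemma_n_d} itself, which you correctly identify as the only real content but then do not actually prove: appealing to ``the standard fact that the number of $a$ \dots equals $\varphi(n)/\varphi(d)$'' is circular, since that fact \emph{is} the lemma, and the displayed attempt at a bijection trails off mid-sentence. Your fallback CRT sketch is viable --- split $n=n_1n_2$ with $n_1$ the part of $n$ supported on the primes dividing $d$ (so $d\mid n_1$) and $n_2$ coprime to $d$; then the count factors as $\varphi(n_2)\cdot\#\{a \bmod n_1:(a,n_1)=1,\ a\equiv x \ (\mathrm{mod}\ d)\}$, and the second factor equals $\varphi(n_1)/\varphi(d)$ because the reduction $\U(\Z_{n_1})\to\U(\Z_d)$ is a surjective homomorphism (here one uses that $n_1$ and $d$ have the same prime divisors, so an $x$ coprime to $d$ lifts to a unit mod $n_1$) --- but none of this bookkeeping is carried out in your text. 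The paper closes the same gap differently and more economically: it expands the condition $(a,n)=1$ as $\sum_{e\mid(a,n)}\mu(e)$, reduces the count to the number of solutions $j$ of the linear congruence $ej\equiv x$ (mod $d$) with $1\le j\le n/e$, which is $n/(de)$ when $(e,d)=1$ and $0$ otherwise, and then evaluates $\frac{n}{d}\sum_{e\mid n,\,(e,d)=1}\mu(e)/e$ as an Euler product to get $\varphi(n)/\varphi(d)$. Either route is fine, but as submitted your lemma proof is a sketch with a circular shortcut, so you should write out one of the two arguments in full before the theorem can be considered proved.
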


Now, if $f=\id$, where $\id(n)=n$ ($n\in \N$), then $\mu*\id =\varphi$, and we deduce \eqref{Menon_id}.

Lemma \ref{Lemma_n_d} is well-known in the literature, usually proved  by the inclusion-exclusion principle. See, e.g., Apostol \cite[Th.\ 5.32]{Apo1976}.
Here we use a different approach.

\begin{proof}[Proof of Lemma {\rm \ref{Lemma_n_d}}] For each term of the sum, since $(a,n)=1$, we have $(x,d)=(a,d)=1$. We can assume that
$(x,d)=1$ (otherwise the sum is empty, and equals zero). Using the property of the M\"{o}bius function, the given sum, say $T$, can be written as
\begin{equation*} 
T = \sum_{\substack{a=1\\ a\equiv x \, \text{\rm (mod $d$)} }}^n  \sum_{e \mid (a,n)}
\mu(e) = \sum_{e \mid n} \mu(e) \sum_{\substack{j=1\\ ej\equiv x \, \text{\rm (mod $d$)} }}^{n/e} 1.
\end{equation*}

Let $d$ and $e$ be such that $d\mid n$ and $e\mid n$. The linear congruence $ej\equiv x$ (mod $d$) has solutions in $j$ if and only if $(d,e)\mid x$,
equivalent to $(d,e)=1$, since $(x,d)=1$. For $(d,e)=1$ there is one solution (mod $d$), and there are $n/(de)$ solutions (mod $n/e$), since
$n/e= (n/(de))\cdot d$. Here $n/(de)$ is an integer by the condition $(d,e)=1$. We obtain
\begin{equation*}
T = \sum_{\substack{e\mid n\\(e,d)=1}} \mu(e)\cdot \frac{n}{de}=
\frac{n}{d} \sum_{\substack{e\mid n\\(e,d)=1}} \frac{\mu(e)}{e}=
\frac{n}{d} \prod_{\substack{p\mid n\\ p\nmid d}} \left(1-\frac1{p}\right)
\end{equation*}
\begin{equation*}
= \frac{n}{d} \prod_{p\mid n} \left(1-\frac1{p}\right) \prod_{p\mid d} \left(1-\frac1{p}\right)^{-1}
= \frac{n}{d} \cdot \frac{\varphi(n)}{n}\cdot \frac{d}{\varphi(d)}
= \frac{\varphi(n)}{\varphi(d)}.
\end{equation*}
\end{proof}

\subsection{Method IV: A variant of Method III} \label{Section_Method_IV}

The proof by Method III can be shortened by using first the identity $f(n)=\sum_{d\mid n} (\mu*f)(d)$. Note that in the case
$f=\id$, this is the Gauss formula $n=\sum_{d\mid n} \varphi(d)$. We deduce
\begin{equation*}
M_f(n): = \sum_{\substack{a=1\\ (a,n)=1}}^n f((a-1,n)) = \sum_{\substack{a=1\\ (a,n)=1}}^n \sum_{d\mid (a-1,n)} (\mu*f)(d) 
\end{equation*}
\begin{equation*}
=\sum_{\substack{a=1\\ (a,n)=1}}^n \sum_{\substack{d\mid a-1\\ d\mid n}} (\mu*f)(d)
= \sum_{d\mid n} (\mu*f)(d) \sum_{\substack{a=1\\  (a,n)=1\\ a\equiv 1\, \text{\rm (mod $d$)}}}^n 1.
\end{equation*}

Here the inner sum is $\varphi(n)/\varphi(d)$ by Lemma \ref{Lemma_n_d}, which proves identity \eqref{Menon_f_convo_form}. In the case
$f=\id$ we obtain \eqref{Menon_id}.

\subsection{Method V: Another variant of Method III}

The proof by Method III can also be presented in a slightly different form, by changing the order of summation. Namely,
by using first the property of the M\"{o}bius $\mu$ function,
\begin{equation*}
M_f(n): = \sum_{\substack{a=1\\ (a,n)=1}}^n f((a-1,n)) = \sum_{a=1}^n f((a-1,n)) \sum_{d\mid (a,n)} \mu(d) 
\end{equation*}
\begin{equation*}
=\sum_{a=1}^n f((a-1,n))  \sum_{d\mid a, \, d\mid n} \mu(d)
= \sum_{d\mid n} \mu(d)
\sum_{\substack{a=1\\ d\mid a}}^n f((a-1,n)).
\end{equation*}

Here we need the following lemma.

\begin{lemma} \label{Lemma_d_mid_a} If $f$ is an arithmetic function and $n,d\in \N$ such that $d\mid n$, then
\begin{equation*}
\sum_{\substack{a=1\\ d\mid a}}^n f((a-1,n)) = \frac{n}{d} \sum_{\substack{e\mid n\\(e,d)=1}} \frac{(\mu*f)(e)}{e}.
\end{equation*}
\end{lemma}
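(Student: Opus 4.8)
The statement to prove is Lemma~\ref{Lemma_d_mid_a}, which evaluates the partial sum $\sum_{a=1,\,d\mid a}^n f((a-1,n))$ over those $a$ divisible by $d$. The natural approach is to parametrize the summation set and then re-expand $f$ through its Möbius inverse, exactly mirroring the two-step trick used in Method IV. First I would write $a=db$ with $b$ running from $1$ to $n/d$; the key elementary observation is that since $d\mid n$ and $d\mid a$, any common divisor of $a-1=db-1$ and $n$ must be coprime to $d$ (because it divides $db-1$), so $(a-1,n)=(db-1,n/d^{?})$ — more precisely, writing $n=d\cdot(n/d)$ one sees that $(db-1,n)$ only "sees" the part of $n$ coprime to $d$. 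I would make this precise by noting $(db-1,n)=(db-1,n')$ where $n'$ is the largest divisor of $n$ coprime to $d$, or simply keep $(db-1,n)$ and handle the coprimality later.

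**Key steps.** The cleaner route: apply the identity $f(m)=\sum_{e\mid m}(\mu*f)(e)$ to $f((db-1,n))$, giving
\begin{equation*}
\sum_{\substack{a=1\\ d\mid a}}^n f((a-1,n)) = \sum_{b=1}^{n/d} \sum_{e\mid (db-1,n)} (\mu*f)(e)
= \sum_{e\mid n} (\mu*f)(e) \sum_{\substack{b=1\\ e\mid db-1}}^{n/d} 1.
\end{equation*}
Now the inner count: the congruence $db\equiv 1\pmod e$ is solvable in $b$ iff $(d,e)=1$, in which case it has a unique solution modulo $e$, hence exactly $(n/d)/e$ solutions with $1\le b\le n/d$ (using $e\mid n$ and $(d,e)=1$, so $e\mid n/d$). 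Substituting gives
\begin{equation*}
\sum_{\substack{a=1\\ d\mid a}}^n f((a-1,n)) = \sum_{\substack{e\mid n\\ (e,d)=1}} (\mu*f)(e)\cdot \frac{n/d}{e} = \frac{n}{d} \sum_{\substack{e\mid n\\ (e,d)=1}} \frac{(\mu*f)(e)}{e},
\end{equation*}
which is exactly the claimed formula.

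**Main obstacle.** The only real subtlety is the divisibility bookkeeping in the inner count: one must check that when $(d,e)=1$ the number of $b\in\{1,\dots,n/d\}$ with $db\equiv 1\pmod e$ is genuinely $n/(de)$, which requires $e\mid n/d$ — this follows from $e\mid n$ together with $(e,d)=1$, and then the solutions of a fixed residue class mod $e$ inside an interval of length $n/d$ (a multiple of $e$) number exactly $(n/d)/e$. The exchange of summation order and the application of $f=\sum_{e\mid\,\cdot}(\mu*f)(e)$ are routine. Note this lemma specializes Method IV's computation, which took $d=1$; indeed setting $d=1$ recovers $\sum_{a=1}^n f((a-1,n))=n\sum_{e\mid n}(\mu*f)(e)/e$, consistent with the gcd-sum identity $P=\mathrm{id}*\varphi$ when $f=\mathrm{id}$.
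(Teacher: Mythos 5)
Your proof is correct and follows essentially the same route as the paper: substitute $a=db$, expand $f((db-1,n))=\sum_{e\mid (db-1,n)}(\mu*f)(e)$, interchange the sums, and count the solutions of $db\equiv 1 \pmod{e}$ in $1\le b\le n/d$, which is $n/(de)$ when $(d,e)=1$ and $0$ otherwise. The only difference is cosmetic: the paper delegates the counting step to the argument in the proof of Lemma~\ref{Lemma_n_d}, while you spell it out (including the observation that $(d,e)=1$ with $d\mid n$, $e\mid n$ gives $de\mid n$), which is fine.
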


Using Lemma \ref{Lemma_d_mid_a} we deduce
\begin{equation} \label{group_unitary_convo}
M_f(n)= \sum_{d\mid n} \mu(d) \frac{n}{d} \sum_{\substack{e\mid n\\(e,d)=1}} \frac{(\mu*f)(e)}{e} =
n \sum_{e\mid n} \frac{(\mu*f)(e)}{e} \sum_{\substack{d\mid n\\(d,e)=1}} \frac{\mu(d)}{d},
\end{equation}
where the inner sum is $(\varphi(n)/n)(\varphi(e)/e)^{-1}$ (see the proof of Lemma \ref{Lemma_n_d}), giving
\begin{equation*}
M_f(n)= \varphi(n) \sum_{e\mid n} \frac{(\mu*f)(e)}{\varphi(e)},
\end{equation*}
which is identity \eqref{Menon_f_convo_form}.

\begin{proof}[Proof of Lemma {\rm \ref{Lemma_d_mid_a}}] By using that $f(n)=\sum_{d\mid n} (\mu*f)(d)$, we deduce
\begin{equation*}
U:= \sum_{\substack{a=1\\ d\mid a}}^n f((a-1,n)) = \sum_{j=1}^{n/d} f((jd-1,n)) = \sum_{j=1}^{n/d} \sum_{e\mid jd-1, \, e\mid n} (\mu*f)(e)
\end{equation*}
\begin{equation*}
= \sum_{e\mid n} (\mu*f)(e) \sum_{\substack{j=1\\ jd\equiv 1\, \text{(mod $e$)}}}^{n/d} 1,
\end{equation*}
where the inner sum is, see the proof of Lemma \ref{Lemma_n_d}, $n/(de)$ if $(d,e)=1$ and $0$ otherwise. This gives
\begin{equation*}
U = \sum_{\substack{e\mid n\\(e,d)=1}} (\mu*f)(e)\cdot \frac{n}{de}=
\frac{n}{d} \sum_{\substack{e\mid n\\(e,d)=1}} \frac{(\mu*f)(e)}{e}.
\end{equation*}
\end{proof}

We remark that by grouping the terms in \eqref{group_unitary_convo} according to the values $de=\delta$ we have
\begin{equation*}
M_f(n)= n \sum_{\delta \mid n} \frac1{\delta} \sum_{\substack{de=\delta \\(d,e)=1}} \mu(d) (\mu*f)(e)
= \sum_{\delta \mid n} \frac{n}{\delta} (\mu \times (\mu *f))(\delta)
\end{equation*}
\begin{equation} \label{Menon_convo}
= (\id * (\mu \times (\mu *f)))(n),
\end{equation}
where $\times$ is the unitary convolution. If $f$ is multiplicative, then it follows that $M_f$ is also multiplicative, and  for every prime power $p^\nu$ ($\nu \ge 1$),
\begin{equation*}
M_f(p^\nu)= (\id * (\mu \times (\mu *f)))(p^\nu) = \sum_{j=0}^\nu p^{\nu-j} (\mu \times (\mu *f))(p^j)
\end{equation*}
\begin{equation*}
=  p^\nu + \sum_{j=1}^\nu p^{\nu-j} (\mu(p^j)+ (\mu *f)(p^j)) =  \sum_{j=0}^\nu p^{\nu-j} (\mu *f)(p^j) - p^{\nu-1}
\end{equation*}
\begin{equation*}
= (\id * \mu * f)(p^\nu) -p^{\nu-1} = (\varphi*f)(p^\nu)-p^{\nu-1},
\end{equation*}
giving the identity
\begin{equation} \label{product_form_f}
M_f(n)= \prod_{p^\nu \mid \mid n} \left((\varphi*f)(p^\nu)-p^{\nu-1}\right).
\end{equation}

\subsection{Method VI: Proof by the orbit counting lemma} \label{Section_proof_orbit_counting}

The orbit counting lemma or Cauchy-Frobenius-Burnside lemma can be stated as follows. See, e.g., Bogart \cite{Bog1991}, Neumann \cite{Neu1979},
Rotman \cite[Ch.\ 2]{Rot2005}, Wright \cite{Wri1981} for its proof, history and applications.

\begin{lemma} Let $G$ be a finite group and $X$ be a finite set. Let $\psi:G\times X\to X$, $\psi(g,x)=gx$ \textup{($g\in G$, $x\in X$)} be an action of $G$ on 
the set $X$. Let $X/G$ be the set of orbits and let $X^g = \{x \in X: gx = x\}$ be the set of elements $x\in X$ fixed by $g\in G$. Then
\begin{equation*}
|X/G|=\frac1{|G|} \sum_{g\in G } |X^g|.
\end{equation*}
\end{lemma}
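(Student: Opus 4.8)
The statement to prove is the orbit counting lemma (Cauchy--Frobenius--Burnside lemma): for a finite group $G$ acting on a finite set $X$,
\[
|X/G| = \frac{1}{|G|} \sum_{g \in G} |X^g|.
\]

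Let me sketch how I would prove this.

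\textbf{Plan.} The standard and cleanest proof is a double-counting argument applied to the set of pairs
\[
F = \{(g,x) \in G \times X : gx = x\}.
\]
First I would count $|F|$ by summing over $g \in G$: by definition of $X^g$, for each fixed $g$ the number of $x$ with $gx = x$ is $|X^g|$, so $|F| = \sum_{g \in G} |X^g|$. Then I would count $|F|$ the other way, summing over $x \in X$: for fixed $x$, the number of $g$ with $gx = x$ is the order of the stabilizer $G_x = \{g \in G : gx = x\}$, so $|F| = \sum_{x \in X} |G_x|$. Equating the two expressions gives $\sum_{g \in G} |X^g| = \sum_{x \in X} |G_x|$.

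\textbf{Key step.} The heart of the argument is the orbit--stabilizer theorem: for each $x \in X$, the orbit $Gx$ is in bijection with the coset space $G/G_x$, hence $|G_x| = |G|/|Gx|$. Substituting this into $\sum_{x \in X} |G_x|$, I would then group the sum over $x$ according to which orbit $x$ lies in. If $\mathcal{O}$ is an orbit, every $x \in \mathcal{O}$ has $|Gx| = |\mathcal{O}|$, so $\sum_{x \in \mathcal{O}} |G_x| = \sum_{x \in \mathcal{O}} |G|/|\mathcal{O}| = |\mathcal{O}| \cdot |G|/|\mathcal{O}| = |G|$. Summing over all orbits (the orbits partition $X$), we get $\sum_{x \in X} |G_x| = |G| \cdot |X/G|$. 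Combining with $|F| = \sum_{g\in G}|X^g|$ and dividing by $|G|$ yields the claim.

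\textbf{Main obstacle.} There is no real obstacle here; the only point requiring care is the orbit--stabilizer bijection and the fact that the orbits form a partition of $X$ (which follows because ``lying in the same orbit'' is an equivalence relation). Both are standard, and in a survey one would likely just cite them or state them in a sentence. The argument is otherwise purely combinatorial double counting, so I would keep the write-up short: introduce $F$, count it two ways, invoke orbit--stabilizer, and conclude.
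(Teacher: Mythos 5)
Your proof is correct: the double-counting of the set $F=\{(g,x): gx=x\}$, combined with the orbit--stabilizer theorem and the grouping of $\sum_{x\in X}|G_x|$ by orbits, is the classical argument for the Cauchy--Frobenius--Burnside lemma, and every step you outline goes through. Note that the paper itself does not prove this lemma at all; it only cites standard references (Bogart, Neumann, Rotman, Wright) for the proof, so your write-up is exactly the kind of standard argument those sources contain, and there is no divergence to report.
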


Apply this lemma in the case of $G=U(\Z_n)=\{k \text{ (mod $n$)}: (k,n)=1\}$, the group of units of the ring $\Z_n$ of residue classes (mod $n$), which is a group of order $\varphi(n)$, and
$X=\{1,2,\ldots,n\}$. For any $k\in U(\Z_n)$ and $x\in  \{1,2,\ldots,n\}$ let $\psi(k,x)=\ell$, where $kx\equiv \ell$ (mod $n$), which is a group action.
Here $X^k=\{x: 1\le x\le n, kx\equiv x \text{ (mod $n$)} \}= \{x: 1\le x\le n, (k-1)x\equiv 0 \text{ (mod $n$)} \}$. Note that 
$|X^k|=(k-1,n)$.

To determine the number of orbits, we need the following lemma.

\begin{lemma} \label{Lemma_cong}
Let $a,b\in \Z$ and $n\in \N$ be given integers. The congruence $ax \equiv b$ \textup{(mod $n$)} has solutions $x$ such that $(x,n)=1$ if and only
if $(a,n)=(b,n)$.
\end{lemma}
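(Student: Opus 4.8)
\textbf{Proof proposal for Lemma \ref{Lemma_cong}.}

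The plan is to reduce the solvability-with-coprimality question to two standard facts: the classical criterion for solvability of a linear congruence, and a lifting argument that adjusts a solution so that it becomes coprime to $n$. First I would dispose of the easy direction. Suppose $ax\equiv b$ (mod $n$) has a solution $x$ with $(x,n)=1$. Then from $n\mid ax-b$ we get $(ax,n)=(b,n)$; but since $(x,n)=1$ we have $(ax,n)=(a,n)$, hence $(a,n)=(b,n)$. This uses only the multiplicativity of gcd against a coprime factor, so it is immediate.

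For the converse, assume $(a,n)=(b,n)=:d$. Write $a=da'$, $b=db'$, $n=dn'$, so that $(a',n')=1$ and $(b',n')=1$. The congruence $ax\equiv b$ (mod $n$) is equivalent to $a'x\equiv b'$ (mod $n'$), which has a unique solution $x_0$ (mod $n'$) because $a'$ is invertible mod $n'$; moreover $(x_0,n')=1$ since $a'x_0=b'$ forces $(x_0,n')\mid(b',n')=1$. Now $x_0$ need not be coprime to $n$: it may share prime factors with $d$. The key step is to replace $x_0$ by $x_0+tn'$ for a suitable $t$, chosen so that $x_0+tn'$ is coprime to every prime dividing $d$ (all such values still satisfy the congruence mod $n$, since they agree with $x_0$ mod $n'$). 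To find such a $t$ I would invoke a Chinese-Remainder / covering argument: for each prime $p\mid d$, the two residues $x_0$ and $x_0+n'$ mod $p$ are distinct when $p\nmid n'$, so at least one of $x_0, x_0+n'$ avoids $0$ mod $p$; handling all primes $p\mid d$ simultaneously, one shows that the set of $t$ (mod $d$) for which $p\mid x_0+tn'$ is, for each $p\mid d$, a single residue class mod $p$ (using $p\nmid n'$), and these bad classes cannot cover all of $\Z/d\Z$ — equivalently, the number of $t$ in a complete residue system mod $\lcm$ of the relevant primes that are killed is a proper subset. Choosing a surviving $t$ gives $x:=x_0+tn'$ with $(x,d)=1$ and $(x,n')=1$, hence $(x,n)=(x,dn')=1$, as required.

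The main obstacle is precisely this last covering step: making sure one can pick $t$ avoiding all the ``bad'' residue classes modulo the primes dividing $d$. The cleanest way is probably to note that it suffices to find $x\equiv x_0$ (mod $n'$) with $x$ coprime to the radical $\prod_{p\mid d}p$; since $(n',\prod_{p\mid d, p\nmid n'}p)=1$, the Chinese Remainder Theorem lets us prescribe $x\equiv x_0$ (mod $n'$) and $x\equiv 1$ (mod $p$) for each prime $p\mid d$ with $p\nmid n'$, while for primes $p\mid d$ with $p\mid n'$ we already have $p\nmid x_0$ automatically (because $(x_0,n')=1$). This packaging avoids any delicate counting and reduces everything to CRT plus the observation $(x_0,n')=1$. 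Alternatively, one can phrase the whole lemma via the structure of $\Z_n$: the element $\bar a$ acts on $\Z_n$ by multiplication with image exactly the cyclic subgroup generated by $\bar d$ in a suitable sense, but the elementary CRT approach above is shorter and self-contained.
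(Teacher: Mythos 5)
Your proposal is correct and follows essentially the same route as the paper: the easy direction via $(ax,n)=(a,n)$ when $(x,n)=1$, and the converse by reducing to $a'x\equiv b'\ (\mathrm{mod}\ n')$ with $(x_0,n')=1$ and then using the Chinese Remainder Theorem to prescribe $x\equiv x_0\ (\mathrm{mod}\ n')$ and $x\equiv 1$ modulo the primes dividing $d$ but not $n'$, which is exactly the paper's construction with $C=\prod_{p\mid d,\ p\nmid N}p$.
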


This lemma shows that the orbits of the considered group action are $O_d=\{k: 1\le k\le n, (k,n)=d \}$, where $d$ is a divisor of $n$. Therefore,
the number of orbits is $|X/G|=\tau(n)$. This completes the proof of identity \eqref{Menon_id}.

\begin{proof}[Proof of Lemma {\rm \ref{Lemma_cong}}]
Assume that there is a solution $x=t$ satisfying $at \equiv b$ (mod $n$) and $(t,n)=1$. Then $(at,n)=(b,n)$.
Since $(t,n)=1$, we obtain that $(a,n)=(b,n)$.

Conversely, suppose that $(a,n)=(b,n)=d$.  Let us denote $A=a/d$, $B=b/d$, $N=n/d$. Then $(A,N)=(B,N)=1$. Hence the linear congruence
$Ax\equiv B$ (mod $N$) has a solution $x=x_0$ (it has, in fact, one solution $x$ modulo $N$) satisfying $Ax_0\equiv B$ (mod $N$). Note that
$(Ax_0,N)=(B,N)=1$, hence $(x_0,N)=1$. It follows that the solutions of the original congruence $ax \equiv b$ (mod $n$)
are $x_j=x_0+jN$, where $0\le j\le d-1$. We have to show that there exist solutions $x_j$ such that $(x_j,n)=1$.

Let $C=\prod_{p\mid d,\; p\nmid N} p$, where $(C,N)=1$. By the Chinese remainder theorem, for the simultaneous congruences
$y\equiv x_0$ (mod $N$), $y\equiv 1$ (mod $C$) there is a solution $y=y_0$ satisfying $y_0\equiv x_0$ (mod $N$), $y_0\equiv 1$ (mod $C$).
Hence $(y_0,N)=(x_0,N)=1$ and $(y_0,C)=1$.

This gives $(y_0,n)=1$ and $Ay_0\equiv Ax_0\equiv B$ (mod $N$). Hence $ay_0\equiv b$ (mod $n$), and $y_0$ is a solution coprime to $n$.
\end{proof}

It can be proved that in the case $(a,n)=(b,n)=d$, the number of incongruent solutions of the congruence
$ax \equiv b$ (mod $n$) is
\begin{equation} \label{number_sol}
\frac{\varphi(n)}{\varphi(n/d)}= d \prod_{\substack{p\mid n\\ p\nmid \, n/d}} \left(1-\frac1{p} \right).
\end{equation}

See Bibak et al. \cite[Th.\ 3.1]{BibTot2017}, Gro\v{s}ek and Porubsk\'{y} \cite[Th.\ 2]{GroPor2013} for the use of two different
approaches to deduce \eqref{number_sol}. Also see paper Garcia and Ligh \cite{GarLig1983}, defining a generalization of
Euler's totient function for arithmetic progressions, and deducing the same formula \eqref{number_sol}.

\subsection{Method VII: Proof by exponential sums} \label{Section_Method_VII}

We have by denoting $a-1=b$,
\begin{equation*} 
M(n):= \sum_{\substack{a \text{ (mod $n$)} \\ (a,n)=1}} (a-1,n)= \sum_{\substack{a,b \text{ (mod $n$)} \\ a-1\equiv b \text{ (mod $n$)} \\ (a,n)=1}} (b,n). 
\end{equation*}

By the familiar orthogonality property
\begin{equation} \label{sum_exp}
\frac1{n} \sum_{j \text{ (mod $n$)}} e(kj/n) = \begin{cases} 1, & \text{ if $n\mid k$}, \\ 0,  & \text{ otherwise},
\end{cases}    
\end{equation}
we deduce that
\begin{equation*} 
M(n)= \frac1{n} \sum_{\substack{a,b \text{ (mod $n$)} \\ (a,n)=1}} (b,n) \sum_{j \text{ (mod $n$)}} e((1-a+b)j/n) 
\end{equation*}
\begin{equation}  \label{M_exp_1}
= \frac1{n} \sum_{j \text{ (mod $n$)}} e(j/n)  \sum_{b \text{ (mod $n$)}} (b,n) e(bj/n) \sum_{\substack{ a \text{ (mod $n$)}\\ (a,n)=1}} e(-aj/n).
\end{equation}

Here the last sum is Ramanujan's sum $c_n(-j)=c_n(j)$, and we use its H\"older's evaluation, namely
\begin{equation} \label{Holder}
c_n(j)= \frac{\varphi(n)\mu(n/(j,n))}{\varphi(n/(j,n))} \quad (j,n\in \N).
\end{equation}

We also need the next result. See T\'oth \cite[Prop.\ 7]{Tot2011Weigh}.
\begin{lemma} \label{Lemma_gcd_exp} For every $n\in \N$ and $k\in \Z$,
\begin{equation} \label{form_gcd_exp}
\sum_{a=1}^n (a,n) e(ka/n)= \sum_{d\mid (k,n)} d \varphi(n/d).
\end{equation}
\end{lemma}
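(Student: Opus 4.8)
The plan is to evaluate the exponential sum $\sum_{a=1}^n (a,n) e(ka/n)$ by expressing the weight $(a,n)$ through a divisor sum and interchanging the order of summation. First I would write $(a,n) = \sum_{d \mid (a,n)} \varphi(d)$ using the Gauss formula $n = \sum_{d \mid n} \varphi(d)$ applied to $(a,n)$; this converts the left-hand side of \eqref{form_gcd_exp} into $\sum_{a=1}^n e(ka/n) \sum_{d \mid a,\, d \mid n} \varphi(d)$.

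Next I would swap the two sums, pulling the sum over $d \mid n$ outside: the expression becomes $\sum_{d \mid n} \varphi(d) \sum_{\substack{a=1 \\ d \mid a}}^n e(ka/n)$. Writing $a = dj$ with $1 \le j \le n/d$, the inner sum is $\sum_{j=1}^{n/d} e(kj/(n/d))$. By the orthogonality relation \eqref{sum_exp} (applied with modulus $n/d$), this inner sum equals $n/d$ when $(n/d) \mid k$ and $0$ otherwise. The condition $(n/d) \mid k$, combined with $d \mid n$, is equivalent to $d$ being a multiple of $n/(k,n)$, equivalently $n/d \mid (k,n)$; reindexing by $e = n/d$ gives exactly $\sum_{e \mid (k,n)} e \varphi(n/e)$, which is the right-hand side of \eqref{form_gcd_exp} after renaming.

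The only mild subtlety — and the step I would be most careful about — is tracking the divisibility condition: one must check that $(n/d) \mid k$ together with $d \mid n$ holds precisely for the divisors $d$ of $n$ that are divisible by $n/(k,n)$, so that the surviving terms are indexed by $e := n/d$ running over the divisors of $(k,n)$. This is elementary but is where an off-by-a-factor error would creep in, so I would verify it explicitly, noting that $(n/d) \mid k \iff (n/d) \mid (k,n)$ since $(n/d) \mid n$ already. Everything else is a routine interchange of finite sums and an application of \eqref{sum_exp}; no convergence or analytic issues arise. One could alternatively phrase the whole computation in terms of Dirichlet convolution, recognizing the answer as $(\id * \varphi)$ evaluated at a gcd, but the direct double-sum argument is the cleanest and most self-contained.
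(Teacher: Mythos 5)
Your argument is correct and is essentially identical to the paper's proof of Lemma \ref{Lemma_gcd_exp}: both expand $(a,n)=\sum_{d\mid (a,n)}\varphi(d)$ via the Gauss formula, interchange the sums, evaluate the inner exponential sum by \eqref{sum_exp}, and reindex the surviving divisors $d$ with $n/d\mid k$ as divisors of $(k,n)$. Your explicit check that $(n/d)\mid k$ is equivalent to $(n/d)\mid (k,n)$ is exactly the reindexing the paper performs implicitly, so there is nothing to add.
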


According to \eqref{M_exp_1}, \eqref{Holder} and \eqref{form_gcd_exp},
\begin{equation*} 
M(n) = \frac{\varphi(n)}{n} \sum_{j \text{ (mod $n$)}} e(j/n)  \sum_{d\mid (j,n)} d\varphi(n/d)  \frac{\mu(n/(j,n))}{\varphi(n/(j,n))},
\end{equation*}
and by grouping the terms of the last sum according to the values $(j,n)=\delta$ with $j= \delta t$, $(t,n/d)=1$ we have
\begin{equation*} 
M(n) = \frac{\varphi(n)}{n} \sum_{\delta \mid n} \frac{\mu(n/\delta)}{\varphi(n/\delta)} \sum_{d\mid \delta} d\varphi(n/d)   
\sum_{\substack{t \text{ (mod $n/\delta$)}\\ (t,n/\delta)=1}} e(t/(n/\delta)),
\end{equation*}
the last sum being $c_{n/\delta}(1)=\mu(n/\delta)$ by \eqref{Holder}. This gives
\begin{equation} \label{sum_Landau}
M(n) = \frac{\varphi(n)}{n} \sum_{\delta \mid n} \frac{\mu^2(n/\delta)}{\varphi(n/\delta)} \sum_{d\mid \delta} d\varphi(n/d)
= \frac{\varphi(n)}{n} \sum_{dek=n} \frac{\mu^2(e)}{\varphi(e)} d\varphi(ke)
\end{equation}
\begin{equation*}
= \frac{\varphi(n)}{n} \sum_{dm=n} d\varphi(m) \sum_{ke=m} \frac{\mu^2(e)}{\varphi(e)},
\end{equation*}
where the inner sum is $m/\varphi(m)$ (Landau's identity), and obtain
\begin{equation*}
M(n)=\frac{\varphi(n)}{n} \sum_{dm=n} dm = \varphi(n) \sum_{dm=n} 1= \varphi(n)\tau(n).
\end{equation*}

\begin{proof}[Proof of Lemma {\rm \ref{Lemma_gcd_exp}}] Using the Gauss formula $n=\sum_{d\mid n} \varphi(d)$ we have
\begin{equation*}
\sum_{a=1}^n (a,n) e(ka/n)=  \sum_{a=1}^n e(ka/n) \sum_{d\mid (a,n)} \varphi(d) = \sum_{d\mid n} \varphi(d) 
\sum_{\substack{a=1\\ d\mid a}}^n e(ka/n)
\end{equation*}
\begin{equation*}
= \sum_{d\mid n} \varphi(d) \sum_{b=1}^{n/d} e(kb/(n/d))= \sum_{\substack{d\mid n\\ n/d \, \mid k}} \frac{n}{d} \varphi(d) = 
\sum_{d\mid (k,n)} d\varphi(n/d),
\end{equation*}
applying \eqref{sum_exp}. 
\end{proof}

\subsection{Method VIII: Another proof by exponential sums} \label{Section_Method_VIII}

Observe that 
\begin{equation} \label{M_a_b}
M(n):= \sum_{\substack{a=1\\ (a,n)=1}}^n (a-1,n)= \sum_{\substack{b=1\\ (b,n)=1}}^n \sum_{\substack{a=1\\ ab\equiv 1 
\text{ (mod $n$)}}}^n (a-1,n),    
\end{equation}
since for every fixed $b$ with $1\le b\le n$ and $(b,n)=1$ there is exactly one integer $a$ such that $1\le a\le n$ and 
$ab\equiv 1$ (mod $n$), whence $(a,n)=1$; and for different values of $b$ the corresponding values of $a$ are different. Note that the condition $(b,n)=1$ can be removed in \eqref{M_a_b}, but we keep it and use it in what follows.

By property \eqref{sum_exp} we deduce that
\begin{equation*}
M(n)= \frac1{n} \sum_{\substack{b=1\\ (b,n)=1}}^n \sum_{a=1}^n (a-1,n) \sum_{j=1}^n e((ab-1)j/n))
\end{equation*}
\begin{equation*}
= \frac1{n} \sum_{j=1}^n e(-j/n) \sum_{\substack{b=1\\ (b,n)=1}}^n \sum_{a=1}^n (a-1,n) e(abj/n)).
\end{equation*}

According to \eqref{form_gcd_exp},
\begin{equation*}
\sum_{a=1}^n (a-1,n) e(abj/n)) = \sum_{a=1}^n (a,n) e((a+1) bj/n))
\end{equation*}
\begin{equation*}
= e(bj/n) \sum_{d\mid (bj,n)} d\varphi(n/d) = e(bj/n) \sum_{d\mid (j,n)} d\varphi(n/d),  
\end{equation*}
with $(b,n)=1$. This gives
\begin{equation*}
M(n)= \frac1{n} \sum_{d\mid n} d \varphi(n/d) \sum_{\substack{b=1\\ (b,n)=1}}^n \sum_{t=1}^{n/d} e((b-1)t/(n/d)),   
\end{equation*}
where, by \eqref{sum_exp} again, the last sum is $n/d$ if $b\equiv 1$ (mod $n/d$) and $0$ otherwise. Hence
\begin{equation*}
M(n) = \sum_{d\mid n} \varphi(n/d) \sum_{\substack{b=1\\ (b,n)=1\\ b\equiv 1 \text{ (mod $n/d$)}} }^n 1 
= \sum_{d\mid n} \varphi(n/d) \frac{\varphi(n)}{\varphi(n/d)}= \varphi(n)\tau(n), 
\end{equation*}
by using Lemma \ref{Lemma_n_d}. Note that Ramanujan sums are not needed here.

One may wonder whether this method works by using character sums instead of exponential sums.
Well, using the orthogonality property
\begin{equation} \label{charct_orthog}
\frac1{\varphi(n)} \sum_{\chi} \chi(u)\overline{\chi}(v) = \begin{cases} 1, & \text{ if $u\equiv v$ (mod $n$)}, \\ 0,  & \text{ otherwise},
\end{cases}    
\end{equation}
where the sum is over the Dirichlet characters $\chi$ (mod $n$), from \eqref{M_a_b} we have
\begin{equation*}
M(n)= \frac1{\varphi(n)} \sum_{b=1}^n \sum_{a=1}^n (a-1,n) \sum_{\chi} \chi(ab)\overline{\chi}(1)
\end{equation*}
\begin{equation} \label{sum_not}
= \frac1{\varphi(n)} \sum_{\chi} \left(\sum_{b=1}^n \chi(b)\right) \left( \sum_{a=1}^n (a-1,n) \chi(a)\right).
\end{equation}

However, as well-known, 
\begin{equation} \label{sum_values_charact}
\sum_{b=1}^n \chi(b) = \begin{cases} \varphi(n), & \text{ if $\chi=\chi_0$, the principal character}, 
\\ 0,  & \text{ otherwise},
\end{cases}    
\end{equation}
therefore \eqref{sum_not} gives nothing but the definition of the sum $M(n)$. On the other hand, the sum
\begin{equation*} 
 \sum_{a=1}^n (a-1,n) \chi(a)
\end{equation*}
can be computed for an arbitrary Dirichlet character (mod $n$), and it gives a generalization of Menon's identity. See 
Section \ref{Sect_id_Dirichlet_char}.

\subsection{Method IX: Proof via the finite Fourier representation of $n$-even functions} \label{Section_n_even}

A function $f:\N \to \C$ is said to be an $n$-even function -- or even function (mod $n$) -- if
$f(k) = f((k,n))$ for all $k\in \N$, where $n\in \N$ is fixed and $(k,n)$ is the gcd of $k$ and $n$. Note that if $f$ is $n$-even, then
$f$ is $n$-periodic, and by this periodicity it can be extended to a function defined on
$\Z$. For example, the function $k\mapsto (k,n)$ is $n$-even. More generally, $k\mapsto F((k,n))$ is $n$-even, where $F$ is
an arbitrary arithmetic function. Another example of an $n$-even function is $k\mapsto c_n(k)$, where $c_n(k)$ is the Ramanujan sum, defined by
\eqref{Ramanujan_sum}, and evaluated as
\begin{equation*} 
c_n(k)=\sum_{d \mid (k,n)} d \mu(n/d) \quad (k, n \in \N).
\end{equation*}

The set ${\cal E}_n$ of $n$-even functions forms a $\tau(n)$ dimensional linear space, and the Ramanujan sums $k\mapsto c_d(k)$ with $d\mid n$
form a basis of the space ${\cal E}_n$. Consequently, every $n$-even function $f$ has a (Ramanujan-)Fourier expansion of the form
\begin{equation} \label{Ramanujan_Fourier_exp}
f(k)= \sum_{d\mid n} \alpha_f(d) c_d(k) \quad (k\in \N),
\end{equation}
where the (Ramanujan-)Fourier coefficients $\alpha_f(d)$ ($d\mid n$) of $f$ are uniquely determined and given by
\begin{equation*}
\alpha_f(d)= \frac1{n} \sum_{e\mid n} f(e)c_{n/e}(n/d)
\end{equation*}
\begin{equation} \label{Ramanujan_Fourier_coeff}
= \frac1{n} \sum_{e\mid n/d} e (\mu*f)(n/e).
\end{equation}

In order to deduce a Menon-type identity, related to the finite Fourier expansion of $n$-even functions, we need the following further 
preliminaries. 

The Cauchy convolution of the $n$-periodic functions $f$ and $g$ is defined by
\begin{equation*} 
(f \otimes g)(k)= \sum_{a+b\equiv k \text{ (mod $n$)}} f(a)g(b)  \quad (k\in \N).
\end{equation*}

For example, the Cauchy convolution of the exponential functions $f(a)=  e(as/n)$ and
$g(b)= e(bt/n)$ is
\begin{equation} \label{Cauchy_exp}
(f \otimes g)(k)= \sum_{a+b\equiv k \text{ (mod $n$) }} e(as/n) e(bt/n) =
\begin{cases} n e(ks/n), &\text{ if $s\equiv t$ (mod $n$)},\\
         0, &\text{ otherwise}.
\end{cases}
\end{equation}

We need the following lemmas.

\begin{lemma} \label{Lemma_Cauchy_Ramanujan} If $d\mid n$ and
$e\mid n$, then the Cauchy convolution of the Ramanujan sums $a\mapsto c_d(a)$ and $b\mapsto c_e(b)$ is
\begin{equation} \label{Cauchy_Raman}
\sum_{a+b\equiv k \text{ (mod $n$) }} c_d(a)c_e(b)= \begin{cases} n c_d(k), &\text{ if $d=e$}, \\
         0, &\text{ otherwise}.
\end{cases}
\end{equation}
\end{lemma}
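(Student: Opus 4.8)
The plan is to reduce the whole statement to the elementary Cauchy convolution \eqref{Cauchy_exp} of pure exponential functions. First I would write each Ramanujan sum through its defining exponential sum \eqref{Ramanujan_sum}, namely $c_d(a)=\sum_{\substack{h\ \text{(mod $d$)}\\ (h,d)=1}} e(ha/d)$ and likewise for $c_e(b)$. Since $d\mid n$ we have $e(ha/d)=e\bigl((hn/d)a/n\bigr)$, so $a\mapsto c_d(a)$ is a finite $\C$-linear combination of the exponentials $a\mapsto e(sa/n)$ with $s=hn/d$, and similarly $b\mapsto c_e(b)$ of $b\mapsto e(tb/n)$ with $t=\ell n/e$. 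Interchanging the finite sums, the left-hand side of \eqref{Cauchy_Raman} becomes
\[
\sum_{\substack{h\ \text{(mod $d$)}\\ (h,d)=1}}\ \sum_{\substack{\ell\ \text{(mod $e$)}\\ (\ell,e)=1}}\ \sum_{a+b\equiv k\ \text{(mod $n$)}} e\!\left(\tfrac{(hn/d)a}{n}\right) e\!\left(\tfrac{(\ell n/e)b}{n}\right),
\]
and by \eqref{Cauchy_exp} the innermost sum equals $n\,e\bigl((hn/d)k/n\bigr)=n\,e(hk/d)$ when $hn/d\equiv \ell n/e\pmod n$, and $0$ otherwise.

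The crux is therefore to decide, for $(h,d)=1$ and $(\ell,e)=1$, when $hn/d\equiv \ell n/e\pmod n$. Here I would invoke the following elementary fact about the additive group $\Z_n$: since $d\mid n$, the residue $hn/d$ has order $n/\gcd(hn/d,n)=d$ whenever $(h,d)=1$ (and $h\mapsto hn/d$ is in fact a bijection from a reduced residue system modulo $d$ onto the set of elements of $\Z_n$ of order exactly $d$). Consequently $hn/d\equiv \ell n/e\pmod n$ forces the two sides to have equal order, hence $d=e$; and once $d=e$, the congruence reads $(h-\ell)n/d\equiv 0\pmod n$, i.e.\ $d\mid h-\ell$, so $h\equiv\ell\pmod d$.

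It remains to assemble the two cases. If $d\neq e$, every inner term above vanishes and the Cauchy convolution is $0$. If $d=e$, the surviving pairs are exactly those with $\ell\equiv h\pmod d$, i.e.\ $\ell=h$ in the chosen reduced residue system modulo $d$, so the double sum collapses to $\sum_{\substack{h\ \text{(mod $d$)}\\(h,d)=1}} n\,e(hk/d)=n\,c_d(k)$ by \eqref{Ramanujan_sum} once more, which is \eqref{Cauchy_Raman}. I expect the only genuine point to be the order-counting observation of the second paragraph; the rest is bookkeeping with \eqref{Cauchy_exp} and the definition of $c_d$.
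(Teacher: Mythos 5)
Your proposal is correct and follows essentially the same route as the paper: expand both Ramanujan sums into exponentials $e(sa/n)$ with $s=hn/d$, $t=\ell n/e$, apply the Cauchy convolution formula \eqref{Cauchy_exp}, and then show the condition $hn/d\equiv \ell n/e \pmod n$ forces $d=e$ and $h=\ell$. The only (harmless) difference is in that last step: you argue via the additive order of $hn/d$ in $\Z_n$, while the paper notes that both $hn/d$ and $\ell n/e$ lie in $\{1,\ldots,n\}$, so the congruence is an equality, and then uses $(h,d)=(\ell,e)=1$; both arguments are sound.
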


\begin{proof}[Proof of Lemma {\rm \ref{Lemma_Cauchy_Ramanujan}}]
Let $n=dd_1= ee_1$. Then
\begin{equation*}
\sum_{a+b\equiv k \text{ (mod $n$) }} c_d(a)c_e(b)= \sum_{a+b \equiv k \text{ (mod $n$) }} \sum_{\substack{
j=1\\(j,d)=1}}^d e(aj/d) \sum_{\substack{\ell=1\\(\ell,e)=1}}^e e(\ell b/e)
\end{equation*}
\begin{equation*}
= \sum_{\substack{j=1\\(j,d)=1}}^d \sum_{\substack{\ell=1\\(\ell,e)=1}}^e \sum_{a+b \equiv k \text{ (mod $n$) }} e(ajd_1/n) e(b\ell e_1/n)
\end{equation*}
\begin{equation*}
= \sum_{\substack{j=1\\(j,d)=1}}^d \sum_{\substack{\ell=1\\(\ell,e)=1}}^e \sum_{jd_1\equiv \ell e_1 \text{ (mod $n$) }} n e(kjd_1/n),
\end{equation*}
according to \eqref{Cauchy_exp}. Here $1\le d_1\le jd_1\le dd_1=n$, $1\le e_1\le \ell e_1\le ee_1=n$, hence $jd_1\equiv \ell e_1$ (mod $n$) holds
if and only if $jd_1=\ell e_1$, that is $jd_1de=\ell e_1de$, $jne=\ell dn$, $je=\ell d$. Taking into account that $(j,d)=1$, $(\ell,e)=1$, we deduce
that $j=\ell$ and $d=e$. Hence, if $d\ne e $, then the sum is zero (empty) and if $d=e$, then it is
\begin{equation*}
n \sum_{\substack{j=1\\(j,d)=1}}^d e(jk/d)= n c_d(k),
\end{equation*}
completing the proof.
\end{proof}

\begin{lemma} \label{Lemma_Cauchy_prod} If $f$ and $g$ are $n$-even functions, then the Cauchy convolution $f \otimes g$ is also $n$-even, and its
Fourier coefficients are
\begin{equation} \label{Cauchy_Fourier_coeff}
\alpha_{f\otimes g}(d)= n\, \alpha_f(d) \alpha_g(d) \quad (d\mid n).
\end{equation}
\end{lemma}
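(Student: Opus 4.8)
The plan is to combine the previous lemma with the uniqueness of Fourier coefficients. First I would recall that, by the finite Fourier representation \eqref{Ramanujan_Fourier_exp}, the $n$-even functions $f$ and $g$ can be written as $f(a)=\sum_{d\mid n}\alpha_f(d)c_d(a)$ and $g(b)=\sum_{e\mid n}\alpha_g(e)c_e(b)$. Substituting these into the definition of the Cauchy convolution and interchanging the (finite) sums, I get
\begin{equation*}
(f\otimes g)(k)=\sum_{d\mid n}\sum_{e\mid n}\alpha_f(d)\alpha_g(e)\sum_{a+b\equiv k\ \text{(mod $n$)}}c_d(a)c_e(b).
\end{equation*}

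Next I would invoke Lemma \ref{Lemma_Cauchy_Ramanujan}: the inner Cauchy convolution of $c_d$ and $c_e$ vanishes unless $d=e$, in which case it equals $n\,c_d(k)$. Hence the double sum collapses to a single sum,
\begin{equation*}
(f\otimes g)(k)=\sum_{d\mid n} n\,\alpha_f(d)\alpha_g(d)\,c_d(k).
\end{equation*}
This exhibits $f\otimes g$ as a linear combination of the Ramanujan sums $c_d$ with $d\mid n$, which are $n$-even; therefore $f\otimes g$ is $n$-even. Moreover, since $\{c_d:d\mid n\}$ is a basis of $\mathcal{E}_n$, the Fourier coefficients of any $n$-even function in this basis are uniquely determined by the expansion \eqref{Ramanujan_Fourier_exp}; comparing the displayed expansion with \eqref{Ramanujan_Fourier_exp} applied to $f\otimes g$ forces $\alpha_{f\otimes g}(d)=n\,\alpha_f(d)\alpha_g(d)$ for every $d\mid n$, which is \eqref{Cauchy_Fourier_coeff}.

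I do not anticipate a serious obstacle here; the only points requiring a little care are the justification for swapping the order of the (finite) summations — which is immediate — and the appeal to uniqueness of the Ramanujan–Fourier coefficients, which rests on the already-stated fact that the $c_d$ with $d\mid n$ form a basis of the $\tau(n)$-dimensional space $\mathcal{E}_n$. One could alternatively verify the coefficient formula directly from \eqref{Ramanujan_Fourier_coeff} by a convolution computation, but routing through Lemma \ref{Lemma_Cauchy_Ramanujan} and uniqueness is shorter and cleaner, so that is the route I would take.
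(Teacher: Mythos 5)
Your proposal is correct and follows essentially the same route as the paper's proof: expand $f$ and $g$ via \eqref{Ramanujan_Fourier_exp}, interchange the finite sums, apply Lemma \ref{Lemma_Cauchy_Ramanujan} to collapse the double sum to $\sum_{d\mid n} n\,\alpha_f(d)\alpha_g(d)c_d(k)$, and read off evenness and the coefficients from the resulting expansion. The only cosmetic difference is that you make the appeal to uniqueness of the Fourier coefficients (basis property of the $c_d$) explicit, which the paper leaves implicit.
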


\begin{proof}[Proof of Lemma {\rm \ref{Lemma_Cauchy_prod}}]
We have
\begin{equation*}
(f\otimes g)(k) = \sum_{a+b\equiv k \text{ (mod $n$)}} f(a)g(b) 
\end{equation*}
\begin{equation*}
=\sum_{a+b \equiv k \text{ (mod $n$) }} \sum_{d\mid n} \alpha_f(d) c_d(a)
\sum_{e\mid n} \alpha_g(e)c_e(b)
\end{equation*}
\begin{equation*}
= \sum_{d\mid n} \sum_{e\mid n} \alpha_f(d) \alpha_g(e) \sum_{a+b \equiv k \text{ (mod $n$) }} c_d(a) c_e(b) = \sum_{d\mid n} n \alpha_f(d)
\alpha_g(d) c_d(k),
\end{equation*}
by using \eqref{Cauchy_Raman}. Since for $d\mid n$ the function $k\mapsto c_d(k)$ is $n$-even we deduce that $f\otimes g$ is also $n$-even, and its
Fourier coefficients are $n\alpha_f(d)\alpha_g(d)$.
\end{proof}

See Haukkanen \cite{Hau2001}, McCarthy \cite[Ch.\ 2]{McC1986}, Montgomery \cite[Ch.\ 2]{Mon2014}, T\'oth and Haukkanen \cite{TotHauk2011}
for more details on these notions and properties.

Now, for the proof of a Menon-type identity let $f$ be an arbitrary $n$-even function with Fourier coefficients $\alpha_f(d)$ ($d\mid n$).
Consider the sum
\begin{equation*}
S_f(k,n):= \sum_{\substack{b \text{ (mod $n$)} \\ (b,n)=1}} f(k-b).
\end{equation*}

If $f(b)=(b,n)$ ($b\in \N$) is the gcd function and $k=1$, then $S_f(1,n)$ is exactly the sum appearing in Menon's identity.

For any $n$-even function $f$ and any $k\in \N$ the sum $S_f(k,n)$ can be written as
\begin{equation} \label{eval_S_f}
S_f(k,n) = \sum_{\substack{a+b\equiv k \text{ (mod $n$)}\\ (b,n)=1}} f(a) = \sum_{a+b\equiv k \text{(mod $n$)}} f(a) \delta(b,n),
\end{equation}
which is the Cauchy convolution of the function $f$ and the Kronecker $\delta$, given here by $\delta(b,n)=1$ if $(b,n)=1$, and
$\delta(b,n)=0$ if $(b,n)>1$. Note that $\delta$ is $n$-even.

The main observation, which leads to the proof of a Menon-type identity is that the function $k\mapsto S_f(k,n)$ is also $n$-even, being
the Cauchy convolution of $n$-even functions, by Lemma \ref{Lemma_Cauchy_prod}.

The Fourier coefficients of $\delta$ are
\begin{equation} \label{alpha_delta}
\alpha_{\delta}(d) =\frac1{n} c_n(n/d)= \frac{\varphi(n) \mu(d)}{n\varphi(d)} \quad (d\mid n),
\end{equation}
the second equality being a consequence of H\"older's evaluation \eqref{Holder} of the Ramanujan sums.

Now, \eqref{eval_S_f} and \eqref{Cauchy_Fourier_coeff} imply that
\begin{equation*}
S_f(k,n)= n \sum_{d\mid n}  \alpha_f(d)\alpha_{\delta}(d) c_d(k).
\end{equation*}

By using \eqref{alpha_delta} we deduce the following general identity, due to Cohen \cite{Coh1959II} in an implicit form. See McCarthy \cite[p.\ 85]{McC1986}.  

\begin{theorem} \label{Th_gen_f_even} Let $f$ be an arbitrary $n$-even function with Fourier coefficients $\alpha_f(d)$ \textup{($d\mid n$)}. Then
\begin{equation*}
S_f(k,n):= \sum_{\substack{b \textup{ (mod $n$)} \\ (b,n)=1}} f(k-b) = \varphi(n) \sum_{d\mid n} \alpha_f(d)
\frac{\mu(d)}{\varphi(d)} c_d(k).
\end{equation*}
\end{theorem}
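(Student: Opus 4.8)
The plan is to identify $S_f(k,n)$ with a Cauchy convolution and then read off its Ramanujan--Fourier coefficients via Lemma \ref{Lemma_Cauchy_prod}. First I would invoke \eqref{eval_S_f}, which rewrites $S_f(k,n)$ as $(f\otimes \delta)(k)$, the Cauchy convolution of the given $n$-even function $f$ with the Kronecker symbol $\delta(\cdot,n)$; since $\delta$ depends on its argument only through its gcd with $n$, it is $n$-even, so Lemma \ref{Lemma_Cauchy_prod} applies. That lemma tells us that $f\otimes\delta$ is again $n$-even, hence admits an expansion of the form \eqref{Ramanujan_Fourier_exp} in the Ramanujan basis $\{c_d : d\mid n\}$, with coefficients $\alpha_{f\otimes\delta}(d)=n\,\alpha_f(d)\,\alpha_\delta(d)$ by \eqref{Cauchy_Fourier_coeff}. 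Thus
\[
S_f(k,n)=\sum_{d\mid n}\alpha_{f\otimes\delta}(d)\,c_d(k)=n\sum_{d\mid n}\alpha_f(d)\,\alpha_\delta(d)\,c_d(k).
\]

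Next I would substitute the explicit value of $\alpha_\delta(d)$. Applying the coefficient formula \eqref{Ramanujan_Fourier_coeff} to $\delta$ gives $\alpha_\delta(d)=\tfrac1n c_n(n/d)$, and H\"older's evaluation \eqref{Holder} of the Ramanujan sum $c_n(n/d)$ --- using that $(n/d,n)=n/d$ --- yields $c_n(n/d)=\varphi(n)\mu(d)/\varphi(d)$, so that $\alpha_\delta(d)=\varphi(n)\mu(d)/(n\varphi(d))$; this is exactly \eqref{alpha_delta}. Plugging this into the display above, the factor $n$ cancels and we arrive at
\[
S_f(k,n)=\varphi(n)\sum_{d\mid n}\alpha_f(d)\,\frac{\mu(d)}{\varphi(d)}\,c_d(k),
\]
which is the assertion. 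In the case $f(b)=(b,n)$ with $k=1$ one then recovers \eqref{Menon_id}, and with general $k$ one recovers \eqref{Ramanujan_sum_id}, as special instances.

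Since all the heavy lifting is done by Lemmas \ref{Lemma_Cauchy_Ramanujan} and \ref{Lemma_Cauchy_prod} (and behind them the orthogonality relation \eqref{Cauchy_exp} for exponential Cauchy convolutions), the proof of the theorem itself is little more than bookkeeping: the only genuine computation is the one-line evaluation of $\alpha_\delta(d)$ via H\"older. The one point that deserves a moment's care is the first step, i.e. identity \eqref{eval_S_f}: one must check that summing $f(k-b)$ over a reduced residue system (mod $n$) is genuinely the same as the weighted Cauchy convolution $\sum_{a+b\equiv k \text{ (mod $n$)}} f(a)\delta(b,n)$, which holds because $f$ is $n$-periodic (so that $f(k-b)$ depends only on the residue of $k-b$) and the coprimality condition on $b$ is precisely what $\delta$ encodes. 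I do not anticipate any serious obstacle beyond this.
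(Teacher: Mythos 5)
Your argument is correct and is essentially the paper's own proof: it identifies $S_f(k,n)$ as the Cauchy convolution $f\otimes\delta$ via \eqref{eval_S_f}, invokes Lemma \ref{Lemma_Cauchy_prod} to get the coefficients $n\,\alpha_f(d)\,\alpha_\delta(d)$, and evaluates $\alpha_\delta(d)$ through \eqref{Ramanujan_Fourier_coeff} and H\"older's formula \eqref{Holder}, exactly as in Section \ref{Section_n_even}. The only cosmetic caveat is your closing aside: recovering \eqref{Ramanujan_sum_id} from the theorem (with $f=c_n$) additionally uses the Brauer--Rademacher identity, but this does not affect the proof of the theorem itself.
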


In fact, Cohen \cite[Th.\ 1]{Coh1959II} stated the result of Lemma \ref{Lemma_Cauchy_prod}, and then presented a series of corollaries, 
including formulas for the sums $S_f(k,n)$ of above with special functions $f$, e.g., $f(b)=\tau((b,n)), \sigma((b,n)), c_n(b), (n,b)^s$, 
where $c_n(b)$ is the Ramanujan sum, $s$ is real. See \cite[Cor.\  2.2, 2.3, 7.2, 17.1]{Coh1959II}.

Now we show how Menon's identity is deduced from Theorem \ref{Th_gen_f_even}. Consider the function $f(b)=(b,n)$. If $a\mid n$, then we have
\begin{equation*}
(\mu*f)(a)= \sum_{d\mid a} f(d) \mu(a/d)= \sum_{d\mid a} d \mu(a/d)= \varphi(a).
\end{equation*}

Therefore, by using \eqref{Ramanujan_Fourier_coeff} we deduce that the Fourier coefficients $\alpha(d)$ of the function $f(b)=(b,n)$ are 
\begin{equation*}
\alpha(d)= \frac1{n} \sum_{e\mid n/d} e \varphi(n/e).
\end{equation*}

We obtain that
\begin{equation*}
S(k,n):= \sum_{\substack{b \text{ (mod $n$)} \\ (b,n)=1}} (k-b,n) =  \frac{\varphi(n)}{n} \sum_{d\mid n} \frac{\mu(d)}{\varphi(d)} c_d(k)
\sum_{e\mid n/d} e \varphi(n/e).
\end{equation*}

Now let $k=1$. Then $c_d(1)= \mu(d)$ and with the notation $n=dt$, $t=n/d= ej$, 
\begin{equation*}
S(1,n)= \frac{\varphi(n)}{n} \sum_{d\mid n} \frac{\mu^2(d)}{\varphi(d)} \sum_{e\mid n/d} e \varphi(n/e) = \frac{\varphi(n)}{n}
\sum_{dej=n} \frac{\mu^2(d)}{\varphi(d)} e \varphi(dj), 
\end{equation*}
which is identical to \eqref{sum_Landau}, and gives $\varphi(n)\tau(n)$.

\subsection{Method X: Proof by Vaidyanathaswamy's class division of integers (mod $n$)} \label{Section_Vaidya_class_div}

Group the elements $a$ of the set $S=\{1,2,\ldots,n\}$ according to the values $(a,n)=d$. Let $S_d=\{a\in S: (a,n)=d\}$, where $d\mid n$. It is clear
that $S=\cup_{d\mid n} S_d$ and $|S_d|=\varphi(n/d)$ for any $d\mid n$. This leads to the Gauss formula $n=\sum_{d\mid n} \varphi(n/d)=
\sum_{d\mid n} \varphi(d)$.

Vaidyanathaswamy \cite{Vai1937} defined the sum of the subsets $S_{d_1}$ and $S_{d_2}$ ($d_1\mid n$, $d_2\mid n$), denoted by $S_{d_1} \oplus S_{d_2}$,
as the multiset
\begin{equation*}
S_{d_1} \oplus S_{d_2} = \{ x + y \text{ (mod $n$)}: x\in S_{d_1}, y\in S_{d_2}\}.
\end{equation*}

The main related property proved in paper \cite{Vai1937} is that in $S_{d_1} \oplus S_{d_2}$ all elements of a class $S_d$ occur the same number
of times, denoted by $\gamma(d_1,d_2,d)$, which depends only on $d_1$, $d_2$ and $d$. The coefficients $\gamma(d_1,d_2,d)$ can be evaluated in terms of
the Ramanujan sums, which is a result of Ramanathan \cite{Ram1944}.

\begin{lemma} \label{Lemma_coeff_Ramanujan} 
1) For any $d_1\mid n$ and $d_2\mid n$ one has
\begin{equation*}
S_{d_1} \oplus S_{d_2} = \bigcup_{d\mid n} \gamma(d_1,d_2,d) S_d,
\end{equation*}
where $\gamma(d_1,d_2,d) S_d$ means the multiset where each element occurs
\begin{equation} \label{gamma_d1_d2}
\gamma(d_1,d_2,d) = \frac1{n} \sum_{\delta \mid n} c_{n/d_1}(\delta) c_{n/d_2}(\delta) c_{n/\delta}(d).
\end{equation}
times. 

2) In the special case $d_1=d_2=1$ we have for any $d\mid n$,
\begin{equation} \label{gamma_1_1_d}
\gamma(1,1,d) = \varphi(n) \prod_{\substack{p\mid n\\ p\nmid d}} \left(1-\frac1{p-1} \right).
\end{equation}
\end{lemma}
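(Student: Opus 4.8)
The plan is to prove part 1) by a direct Fourier-analytic computation, mirroring the argument used for Lemma \ref{Lemma_Cauchy_Ramanujan}, and then to obtain part 2) by specializing and simplifying the resulting sum over divisors.

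For part 1), I would start from the observation that for fixed $d_1\mid n$, the function $k \mapsto \#\{x \in S_{d_1} : x \equiv k \ (\mathrm{mod}\ n)\}$ is just the indicator of the class $S_{d_1}$, which is an $n$-even function: it equals $1$ when $(k,n)=d_1$ and $0$ otherwise. By the Ramanujan--Fourier expansion \eqref{Ramanujan_Fourier_exp} and the coefficient formula \eqref{Ramanujan_Fourier_coeff}, this indicator has Fourier coefficients that one computes explicitly; a cleaner route is to use the known Fourier expansion of the indicator of $S_{d_1}$ in terms of Ramanujan sums $c_{n/d_1}$ directly. Then the multiset $S_{d_1} \oplus S_{d_2}$ is, as a function of $k$, the Cauchy convolution of the indicators of $S_{d_1}$ and $S_{d_2}$; by Lemma \ref{Lemma_Cauchy_prod} this is again $n$-even, which already reproves Vaidyanathaswamy's property that each class $S_d$ occurs with a multiplicity $\gamma(d_1,d_2,d)$ depending only on $d_1,d_2,d$. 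To extract the value of $\gamma(d_1,d_2,d)$, I would expand both indicators in the Ramanujan basis, apply the orthogonality relation \eqref{Cauchy_Raman} for the Cauchy convolution of Ramanujan sums, and then read off the coefficient of $c_d(k)$, after expressing the Fourier coefficients of the indicators back through finite sums of Ramanujan sums. Collecting the $\delta$-sum that arises from inverting the Fourier transform yields exactly the formula \eqref{gamma_d1_d2}: the three factors $c_{n/d_1}(\delta)$, $c_{n/d_2}(\delta)$, $c_{n/\delta}(d)$ come respectively from the expansion of the two indicators and from the Fourier inversion, and the prefactor $1/n$ is the normalization in the orthogonality relation.

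For part 2), I would set $d_1 = d_2 = 1$ in \eqref{gamma_d1_d2}, so $n/d_1 = n/d_2 = n$, giving
\begin{equation*}
\gamma(1,1,d) = \frac1{n} \sum_{\delta \mid n} c_n(\delta)^2\, c_{n/\delta}(d).
\end{equation*}
Here $c_n(\delta) = \mu(n/(n,\delta))\varphi(n)/\varphi(n/(n,\delta))$ by H\"older's evaluation \eqref{Holder}, and since $\delta \mid n$ we have $(n,\delta) = \delta$, so $c_n(\delta) = \mu(n/\delta)\varphi(n)/\varphi(n/\delta)$; in particular $c_n(\delta)^2$ vanishes unless $n/\delta$ is squarefree. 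Writing $e = n/\delta$ (so $e$ ranges over divisors of $n$, effectively over squarefree ones), the sum becomes
\begin{equation*}
\gamma(1,1,d) = \frac{\varphi(n)^2}{n} \sum_{\substack{e \mid n \\ e \text{ squarefree}}} \frac{\mu^2(e)}{\varphi(e)^2}\, c_e(d),
\end{equation*}
and one more application of \eqref{Holder} gives $c_e(d) = \mu(e/(e,d))\varphi(e)/\varphi(e/(e,d))$. At this point the sum is multiplicative in $n$, so I would factor it over the primes $p \mid n$: for $p \nmid d$ the local factor is $1 + (c_p(d))/((p-1)) = 1 - 1/(p-1)$ since $c_p(d) = \mu(p) = -1$, while for $p \mid d$ the local factor is $1 + (c_p(d))/((p-1)) = 1 + \varphi(p)/(p-1) = 1 + 1 = $ (after combining with the $\varphi(n)^2/n$ normalization) exactly what is needed for the product to telescope to $\varphi(n)\prod_{p\mid n, p\nmid d}(1 - 1/(p-1))$. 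I would verify the bookkeeping of the $\varphi(n)^2/n$ prefactor against $\prod_{p\mid n}\varphi(p^{\nu_p})^2/p^{\nu_p}$ to confirm the claimed closed form.

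The main obstacle I anticipate is not any single hard idea but the careful tracking of the Fourier coefficients through the Cauchy convolution: one must be consistent about whether the indicator of $S_{d_1}$ is expanded with coefficients in terms of $c_{n/d_1}$ or $c_{d_1}$, and the normalization constants ($1/n$ factors, and the $n$ appearing in \eqref{Cauchy_Fourier_coeff} and \eqref{Cauchy_Raman}) must be combined correctly so that the final $1/n$ in \eqref{gamma_d1_d2} is exactly right. The specialization in part 2) is then a mechanical, if somewhat lengthy, multiplicative-function computation, where the only subtlety is handling the squarefree-support condition forced by $\mu^2$.
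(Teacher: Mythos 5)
Your part 1) is essentially the paper's own argument: the paper introduces the indicator functions $\varrho_1,\varrho_2$ of the classes $S_{d_1},S_{d_2}$, computes their Fourier coefficients $\alpha_{\varrho_j}(d)=\frac1n c_{n/d_j}(n/d)$ from \eqref{Ramanujan_Fourier_coeff}, applies Lemma \ref{Lemma_Cauchy_prod} to get $\alpha_{\varrho_1\otimes\varrho_2}(d)=\frac1n c_{n/d_1}(n/d)c_{n/d_2}(n/d)$, and then reads \eqref{gamma_d1_d2} off the expansion \eqref{Ramanujan_Fourier_exp} after the substitution $\delta=n/e$. Your outline, though vaguer about exactly which coefficient formula is invoked, is the same computation with the same normalizations.

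For part 2) the paper only says ``direct computations'', so your details are the substance here, and they contain an arithmetic slip that, as written, would make the verification fail. From
\begin{equation*}
\gamma(1,1,d)=\frac{\varphi(n)^2}{n}\sum_{e\mid n}\frac{\mu^2(e)}{\varphi(e)^2}\,c_e(d),
\end{equation*}
the Euler factor at a prime $p$ with $p^\nu\mid\mid n$ is $1+\frac{c_p(d)}{(p-1)^2}$ (the denominator is $\varphi(p)^2$), not $1+\frac{c_p(d)}{p-1}$ as you wrote. Hence for $p\nmid d$ the local factor is $1-\frac1{(p-1)^2}=\frac{p(p-2)}{(p-1)^2}$, and only after multiplying by the local prefactor $\frac{\varphi(p^\nu)^2}{p^\nu}=p^{\nu-2}(p-1)^2$ does one obtain $\varphi(p^\nu)\bigl(1-\frac1{p-1}\bigr)$; for $p\mid d$ the factor is $1+\frac{p-1}{(p-1)^2}=\frac{p}{p-1}$, which combines with the prefactor to give exactly $\varphi(p^\nu)$. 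With your factors ($1-\frac1{p-1}$ for $p\nmid d$ and $2$ for $p\mid d$) the product does not telescope to \eqref{gamma_1_1_d}. This is a local bookkeeping error, not a wrong method (and you flag the need to check precisely this point); once the $(p-1)^2$ is restored the computation closes and yields the stated formula.
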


Now, for the proof of Menon's identity we follow the arguments given by Menon \cite{Men1965}, and explain some more details by using properties of 
$n$-even functions. Note that Menon \cite{Men1965} did not use the concept of $n$-even functions, which has been introduced and investigated by Cohen in a series of papers, including \cite{Coh1955,Coh1958I,Coh1959II}.

Let $f$ be an arithmetic function, let $d_1\mid n$, $d_2\mid n$ and consider the sum
\begin{equation*}
F(n,d_1,d_2):= \sum_{\substack{a \text{ (mod $n$)} \\ (a,n)=d_1}} f((a+d_2,n)).
\end{equation*}

The following property is needed. See below its proof.
\begin{lemma} \label{Lemma_P}
The sum $F(n,d_1,d_2)$ is unchanged if $d_2$ is replaced by any integer $b$ such that $(b,n)=d_2$.
\end{lemma}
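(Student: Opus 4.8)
The plan is to reduce the statement to the single elementary fact that multiplication by a unit of $\Z_n$ changes neither the gcd of an integer with $n$ nor the value $f((k,n))$, since the latter depends only on $k$ modulo $n$.

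First I would invoke Lemma \ref{Lemma_cong}: because $(d_2,n)=d_2=(b,n)$, the congruence $d_2 x\equiv b\pmod n$ has a solution $x=u$ with $(u,n)=1$. Fix such a $u$ and let $v$ be its inverse modulo $n$, so $uv\equiv 1\pmod n$ and $(v,n)=1$. Since $(v,n)=1$ we have $(va,n)=(a,n)$ for every integer $a$, so the map $a\mapsto va\bmod n$ permutes the class $S_{d_1}=\{a:1\le a\le n,\ (a,n)=d_1\}$.

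Next I would carry out the substitution in $F(n,d_1,d_2)$. Starting from $\sum_{a\in S_{d_1}}f((a+b,n))$ and using $b\equiv d_2u\pmod n$ — together with the fact that $(k,n)$ only sees $k$ modulo $n$ — this becomes $\sum_{a\in S_{d_1}}f((a+d_2u,n))$. Now $a+d_2u\equiv u(va+d_2)\pmod n$, hence $(a+d_2u,n)=(u(va+d_2),n)=(va+d_2,n)$, so the sum equals $\sum_{a\in S_{d_1}}f((va+d_2,n))$. Re-indexing by $a'\equiv va\pmod n$, which runs over $S_{d_1}$ as $a$ does, turns this into $\sum_{a'\in S_{d_1}}f((a'+d_2,n))=F(n,d_1,d_2)$. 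Since $\sum_{a\in S_{d_1}}f((a+b,n))$ is exactly $F(n,d_1,d_2)$ with $d_2$ replaced by $b$, this is the assertion.

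I do not expect a genuine obstacle; the only thing needing care is the congruence bookkeeping — in particular, justifying that $a\mapsto va$ is a bijection of $S_{d_1}$ and that each step respects ``modulo $n$'' — all of which follow from $(u,n)=(v,n)=1$. A cosmetically cleaner alternative is to argue with multisets: modulo $n$ one has $\{a+b:a\in S_{d_1}\}=d_2u+S_{d_1}=u\cdot(d_2+S_{d_1})$, and then, applying $f((\,\cdot\,,n))$ — which is constant on each $\Z_n^*$-orbit — summing over the left-hand multiset gives the same value as summing over $d_2+S_{d_1}$, namely $F(n,d_1,d_2)$.
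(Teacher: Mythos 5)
Your proof is correct, but it follows a genuinely different route from the paper. The paper treats $F(n,d_1,\cdot)$ structurally: it writes $F(n,d_1,d_2)$ as the Cauchy convolution $(\varrho_{d_1}\otimes g)(d_2)$ of the two $n$-even functions $\varrho_{d_1}$ and $g(a)=f((a,n))$, and then invokes Lemma \ref{Lemma_Cauchy_prod} (itself proved via the Ramanujan--Fourier expansion) to conclude that $d_2\mapsto F(n,d_1,d_2)$ is $n$-even, whence $F(n,d_1,b)=F(n,d_1,(b,n))=F(n,d_1,d_2)$. You instead argue elementarily: Lemma \ref{Lemma_cong} supplies a unit $u$ with $d_2u\equiv b \pmod n$, and the change of variables $a\mapsto va$ (with $uv\equiv 1\pmod n$), which permutes $S_{d_1}$ because $(v,n)=1$, together with $(u(va+d_2),n)=(va+d_2,n)$, carries $\sum_{a\in S_{d_1}}f((a+b,n))$ directly onto $\sum_{a\in S_{d_1}}f((a+d_2,n))$; every step is justified, including the implicit convention that $(k,n)$ and hence $f((k,n))$ depend only on $k$ modulo $n$. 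What each approach buys: the paper's argument is essentially one line once the $n$-even/Cauchy-convolution machinery of Section 2.9 is available, and it exhibits the lemma as a special case of a general closure property that is reused elsewhere (e.g.\ in Lemma \ref{Lemma_coeff_Ramanujan}); your argument is self-contained and more elementary, needing only Lemma \ref{Lemma_cong} and no Ramanujan sums or Fourier coefficients, so it would stand on its own in a treatment that avoids even functions (indeed it is closer in spirit to a direct verification of Vaidyanathaswamy's invariance property).
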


By Lemma \ref{Lemma_P},
\begin{equation} \label{F_n_d}
F(n,d_1,d_2)= \sum_{\substack{a \text{ (mod $n$)} \\ (a,n)=d_1}} f((a+b,n)), \quad \text{ where $(b,n)=d_2$}.
\end{equation}

If we sum equations \eqref{F_n_d} over the values of $b$, note that there are $\varphi(n/d_2)$ such values of $b$ (mod $n$), 
we obtain
\begin{equation*}
F(n,d_1,d_2)= \frac1{\varphi(n/d_2)} \sum_{\substack{a,b \text{ (mod $n$)} \\ (a,n)=d_1\\ (b,n)=d_2}} f((a+b,n)),
\end{equation*}
and grouping the terms according to the values $a+b=c$ we have
\begin{equation*}
F(n,d_1,d_2)= \frac1{\varphi(n/d_2)} \sum_{c \text{ (mod $n$)}} f((c,n)) \sum_{\substack{a,b \text{ (mod $n$)} \\ (a,n)=d_1\\ (b,n)=d_2\\
a+b\equiv c \text{ (mod $n$}) }} 1.
\end{equation*}

Here the inner sum is exactly the Cauchy convolution $(\varrho_1 \otimes \varrho_2)(c)$ of the functions $\varrho_1$ and $\varrho_2$, where
\begin{equation} \label{funct_varrho}
  \varrho_j(a)=\begin{cases} 1, & \text{ if $(a,n)=d_j$}, \\ 0, & \text{ otherwise},
  \end{cases} \quad (j=1,2).
\end{equation}

Since $\varrho_1$ and $\varrho_2$ are $n$-even, $\varrho_1 \otimes \varrho_2$ is also $n$-even by Lemma \ref{Lemma_Cauchy_prod}, that is
$(\varrho_1 \otimes \varrho_2)(c)= (\varrho_1 \otimes \varrho_2)((c,n))$. We have
\begin{equation*}
F(n,d_1,d_2)= \frac1{\varphi(n/d_2)} \sum_{c \text{ (mod $n$)}} f((c,n)) (\varrho_1 \otimes \varrho_2)((c,n))
\end{equation*}
\begin{equation*}
= \frac1{\varphi(n/d_2)} \sum_{d\mid n} f(d) \varphi(n/d) (\varrho_1 \otimes \varrho_2)(d),
\end{equation*}
by grouping the terms according to the values $(c,n)=d$. Observe that $(\varrho_1 \otimes \varrho_2)(d)=\gamma(d_1,d_2,d)$, given by
\eqref{gamma_1_1_d}. This leads to the following general result of Menon \cite[Eq.\ (4.5)]{Men1965}.

\begin{theorem} Let $f$ be an arithmetic function, let $n\in \N$ and $d_1,d_2\mid n$. Then
\begin{equation*}
\sum_{\substack{a\, \textup{(mod $n$)} \\ (a,n)=d_1}} f((a+d_2,n))=
\frac1{\varphi(n/d_2)} \sum_{d\mid n} f(d) \gamma(d_1,d_2,d) \varphi(n/d).
\end{equation*}
\end{theorem}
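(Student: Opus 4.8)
The plan is to carry out the averaging argument that reduces the one‑parameter sum on the left to a Cauchy convolution of indicator functions. First I would apply Lemma~\ref{Lemma_P}, which says that $F(n,d_1,d_2):=\sum_{(a,n)=d_1} f((a+d_2,n))$ is unchanged when $d_2$ is replaced by any $b$ with $(b,n)=d_2$; thus
\[
F(n,d_1,d_2)=\sum_{\substack{a\ (\mathrm{mod}\ n)\\ (a,n)=d_1}} f((a+b,n)) \qquad\text{for every } b \text{ with } (b,n)=d_2 .
\]
There are exactly $\varphi(n/d_2)$ residues $b$ modulo $n$ with $(b,n)=d_2$, so summing these identities over all such $b$ and dividing by $\varphi(n/d_2)$ produces a symmetric double sum
\[
F(n,d_1,d_2)=\frac1{\varphi(n/d_2)} \sum_{\substack{a,b\ (\mathrm{mod}\ n)\\ (a,n)=d_1,\ (b,n)=d_2}} f((a+b,n)).
\]

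Next I would group the pairs $(a,b)$ according to the value $c\equiv a+b\ (\mathrm{mod}\ n)$, so that
\[
F(n,d_1,d_2)=\frac1{\varphi(n/d_2)} \sum_{c\ (\mathrm{mod}\ n)} f((c,n))\, (\varrho_1\otimes\varrho_2)(c),
\]
where $\varrho_1,\varrho_2$ are the $n$-even indicator functions defined in \eqref{funct_varrho} and $\otimes$ is the Cauchy convolution. Here is where the structural input enters: since $\varrho_1$ and $\varrho_2$ are $n$-even, Lemma~\ref{Lemma_Cauchy_prod} guarantees that $\varrho_1\otimes\varrho_2$ is $n$-even as well, hence $(\varrho_1\otimes\varrho_2)(c)$ depends only on $(c,n)$. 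Grouping the residues $c$ by $(c,n)=d$, of which there are $\varphi(n/d)$, and factoring out $f(d)$ gives
\[
F(n,d_1,d_2)=\frac1{\varphi(n/d_2)} \sum_{d\mid n} f(d)\,\varphi(n/d)\, (\varrho_1\otimes\varrho_2)(d).
\]

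It remains to recognize $(\varrho_1\otimes\varrho_2)(d)$ as $\gamma(d_1,d_2,d)$. By definition $(\varrho_1\otimes\varrho_2)(d)$ is the number of solutions of $x+y\equiv d\ (\mathrm{mod}\ n)$ with $x\in S_{d_1}$ and $y\in S_{d_2}$, which is exactly the number of times a fixed element of $S_d$ occurs in Vaidyanathaswamy's multiset sum $S_{d_1}\oplus S_{d_2}$; by Lemma~\ref{Lemma_coeff_Ramanujan} this count is $\gamma(d_1,d_2,d)$. Substituting completes the proof. I expect the main obstacle to lie in these two identifications rather than in the arithmetic: one must check that the ``average over $b$'' step is legitimate because $F(n,d_1,d_2)$ is genuinely constant on the class $(b,n)=d_2$ (Lemma~\ref{Lemma_P}), not merely constant on average, and one must match the combinatorial definition of $S_{d_1}\oplus S_{d_2}$ with the Cauchy‑convolution count so that the multiplicities line up with $\gamma(d_1,d_2,d)$. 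Everything else — the passage to the double sum, the grouping by $\gcd$, and the count $|S_d|=\varphi(n/d)$ — is the same routine manipulation used throughout the survey.
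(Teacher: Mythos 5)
Your proposal is correct and follows essentially the same route as the paper's own proof: averaging over the $\varphi(n/d_2)$ residues $b$ with $(b,n)=d_2$ via Lemma~\ref{Lemma_P}, recognizing the inner count as the Cauchy convolution $(\varrho_1\otimes\varrho_2)(c)$, invoking Lemma~\ref{Lemma_Cauchy_prod} to see it is $n$-even, grouping by $(c,n)=d$, and identifying $(\varrho_1\otimes\varrho_2)(d)=\gamma(d_1,d_2,d)$. No gaps; the argument matches the paper's step for step.
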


If $d_1=d_2=1$, then by \eqref{gamma_1_1_d} we deduce the next formula, given by Menon \cite[Eq.\ (4.8)]{Men1965}.

\begin{corollary} Let $f$ be an arithmetic function and $n\in \N$. Then
\begin{equation} \label{id_by_Vaidya_method}
\sum_{\substack{a\, \textup{(mod $n$)} \\ (a,n)=1}} f((a-1,n))=
\sum_{d\mid n} f(d) \varphi(n/d) \prod_{\substack{p\mid n\\ p\nmid d}} \left(1-\frac1{p-1} \right).
\end{equation}
\end{corollary}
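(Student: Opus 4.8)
The plan is to derive \eqref{id_by_Vaidya_method} as the immediate specialization $d_1=d_2=1$ of the Theorem that precedes it. Setting $d_1=d_2=1$ there gives
\begin{equation*}
\sum_{\substack{a\, \textup{(mod $n$)} \\ (a,n)=1}} f((a+1,n)) = \frac1{\varphi(n)} \sum_{d\mid n} f(d)\, \gamma(1,1,d)\, \varphi(n/d),
\end{equation*}
and I would then insert the closed form $\gamma(1,1,d)=\varphi(n)\prod_{p\mid n,\, p\nmid d}\bigl(1-\tfrac1{p-1}\bigr)$ supplied by \eqref{gamma_1_1_d}. The factor $\varphi(n)$ cancels against the $1/\varphi(n)$ in front, leaving exactly $\sum_{d\mid n} f(d)\varphi(n/d)\prod_{p\mid n,\, p\nmid d}(1-1/(p-1))$, which is the right-hand side of \eqref{id_by_Vaidya_method}.

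It remains to reconcile the left-hand sides: the Theorem yields $\sum f((a+1,n))$ whereas \eqref{id_by_Vaidya_method} has $\sum f((a-1,n))$. Here I would use that as $a$ ranges over a reduced residue system modulo $n$ so does $-a$ (equivalently $n-a$), together with the elementary identity $(1-a,n)=(a-1,n)$; replacing $a$ by $-a$ in $\sum_{(a,n)=1} f((a+1,n))$ turns it into $\sum_{(a,n)=1} f((1-a,n)) = \sum_{(a,n)=1} f((a-1,n))$. This completes the deduction.

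Since the Theorem and \eqref{gamma_1_1_d} are already available, the corollary itself is a one-line consequence; the only real content lies in \eqref{gamma_1_1_d}, i.e.\ the evaluation of the Cauchy convolution $(\varrho_1\otimes\varrho_2)(d)=\gamma(d_1,d_2,d)$ at $d_1=d_2=1$. For a self-contained argument one would either specialize the Ramanujan-sum expression \eqref{gamma_d1_d2} (noting $c_{n/1}(\delta)=c_n(\delta)$ and applying H\"older's evaluation \eqref{Holder}) and simplify the resulting divisor sum, or compute $(\varrho_1\otimes\varrho_2)(d)$ directly by counting pairs of reduced residues $x,y$ modulo $n$ with $x+y\equiv d$, via inclusion--exclusion over the primes dividing $n$. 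I expect this simplification of the $\delta$-indexed sum to be the one mildly technical point; everything else is bookkeeping.

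As a consistency check I would verify the case $f=\id$: the right-hand side becomes $\sum_{d\mid n} d\,\varphi(n/d)\prod_{p\mid n,\, p\nmid d}(1-1/(p-1))$, which must collapse to $\varphi(n)\tau(n)$; confirming this both validates the computation and exhibits yet another route to Menon's identity \eqref{Menon_id}.
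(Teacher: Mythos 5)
Your proposal is correct and follows essentially the same route as the paper: specialize the preceding Theorem to $d_1=d_2=1$, insert the evaluation \eqref{gamma_1_1_d} of $\gamma(1,1,d)$ (which is already available as Lemma \ref{Lemma_coeff_Ramanujan}), and cancel the factor $\varphi(n)$. Your substitution $a\mapsto -a$ to pass from $f((a+1,n))$ to $f((a-1,n))$ is a fine way to settle the sign, equivalent to what the paper gets implicitly from Lemma \ref{Lemma_P} (the sum depends on the shift only through its gcd with $n$, and $(-1,n)=1$).
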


We remark that it can be showed by direct computations that
\begin{equation} \label{varphi_n_d}
\varphi(n/d) \prod_{\substack{p\mid n\\ p\nmid d}}
\left(1-\frac1{p-1} \right) = \varphi(n) \sum_{\delta \mid n/d} \frac{\mu(\delta)}{\varphi(d\delta)},
\end{equation}
and that for any function $f$,
\begin{equation} \label{sum_f}
\sum_{d\mid n} f(d) \sum_{\delta \mid n/d} \frac{\mu(\delta)}{\varphi(d\delta)} = \sum_{d\mid n} \frac{(\mu*f)(d)}{\varphi(d)}, 
\end{equation}
therefore \eqref{id_by_Vaidya_method}, \eqref{varphi_n_d} and \eqref{sum_f} imply formula \eqref{Menon_f_convo_form}, which reduces to Menon's original identity in the case $f=\id$.

\begin{proof}[Proof of Lemma {\rm \ref{Lemma_coeff_Ramanujan}}] 1) According to its definition,
\begin{equation*}
\gamma(d_1,d_2,d) = \sum_{\substack{a+b\equiv d \text{ (mod $n$)}\\ (a,n)=d_1\\ (b,n)=d_2}} 1 = (\varrho_1 \otimes \varrho_2)(d),
\end{equation*}
representing the Cauchy convolution of the functions $\varrho_1$ and $\varrho_2$, given by \eqref{funct_varrho}. As mentioned above, the function 
$\varrho_1 \otimes \varrho_2$ is $n$-even. Also, the Fourier coefficients of $\varrho_j$ ($j=1,2$) are
\begin{equation*}
\alpha_{\varrho_j}(d)=\frac1{n} \sum_{e\mid n} \varrho_j(e)c_{n/e}(n/d) = \frac1{n} c_{n/d_j}(n/d) \quad (d\mid n).
\end{equation*}

Therefore, by Lemma \ref{Lemma_Cauchy_prod}, the Fourier coefficients of $\varrho_1 \otimes \varrho_2$ are
\begin{equation*}
\alpha_{\varrho_1\otimes \varrho_2}(d)=  \frac1{n} c_{n/d_1}(n/d) c_{n/d_2}(n/d) \quad (d\mid n),
\end{equation*}
and from the Fourier representation \eqref{Ramanujan_Fourier_exp} we obtain identity \eqref{gamma_d1_d2}.

2) If $d_1=d_2=1$, then \eqref{gamma_d1_d2} implies \eqref{gamma_1_1_d} by direct computations using properties of 
the Ramanujan sums.
\end{proof}

Let $d_1, d_2,d$ be divisors of $n$. It can be shown that $\gamma(d_1,d_2,d)=0$, unless $(d_1,d_2)=(d_1,d)=(d_2,d)=\delta$, when
\begin{equation*}
\gamma(d_1,d_2,d)= \frac{\varphi(n/d_1)\varphi(n/d_2)}{\varphi(n/d)} \prod_{\substack{p\mid n/\delta \\ p\nmid d/\delta\\ p\nmid d_1/\delta \\
p\nmid d_2/\delta}} \left(1-\frac1{p-1} \right).
\end{equation*}

For $d_1=d_2=1$ this recovers formula \eqref{gamma_1_1_d}. See Haukkanen \cite{Hau2000} and Sivaramakrishnan \cite[Ch. XV]{Siv1989} for more details.

\begin{proof}[Proof of Lemma {\rm \ref{Lemma_P}}] 
Consider the functions $g(a)=f((a,n))$ and $\varrho_{d_1}(a)$, defined by \eqref{funct_varrho}, which are $n$-even
functions. Also
\begin{equation*}
F(n,d_1,d_2)= \sum_{a \text{ (mod $n$)}} \varrho_{d_1}(a) g(a-d_2) = \sum_{a+b\equiv d_2 \text{ (mod $n$)}} \varrho_{d_1}(a) g(b)
= (\varrho_{d_1} \times g)(d_2),
\end{equation*}
the Cauchy convolution of the functions $\varrho_{d_1}$ and $g$. Therefore the function $d_2\mapsto F(n,d_1,d_2)$ is also $n$-even (where
$d_1$ is fixed). So, if $(b,n)=d_2$, then $F(n,d_1,b)=F(n,d_1,(b,n))=F(n,d_1,d_2)$.
\end{proof}

\section{Historical remarks} \label{Section_Historical_remarks}

\subsection{Remarks on the paper by Menon} \label{Sect_proof_Menon}

Menon \cite{Men1965} gave three different proofs of \eqref{Menon_id}. His first proof is by using the orbit counting lemma. See
Section \ref{Section_proof_orbit_counting}. The third proof is by using Vaidyana\-thas\-wa\-my's class division of integers (mod $n$). See Section \ref{Section_Vaidya_class_div}. The second proof given by Menon \cite{Men1965} is purely number-theoretic, by using multiplicative functions
of several variables. Namely, he proved the following result. See the survey paper by the author \cite{Tot2014} on multiplicative functions of several
variables. Also see Subbarao \cite{Sub1968}.

\begin{lemma}[{\cite[Lemma]{Men1965}}] \label{Lemma_Menon} Let $f$ be a multiplicative arithmetic function of $r$ variables and let
$P_i\in \Z[x]$ \textup{($1\le i\le r$)} be polynomials. Then the function
\begin{align} \label{function_F}
F(n):= \sum_{a=1}^n f((P_1(a),n),\ldots,(P_r(a),n))
\end{align}
is multiplicative in the single variable $n$, where $(P_i(a),n)$ \textup{($1\le i\le r$)} is the gcd of $P_i(a)$ and $n$.
\end{lemma}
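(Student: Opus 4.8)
The plan is to verify the two defining properties of a multiplicative function of one variable: that $F(1)=1$, and that $F(n_1n_2)=F(n_1)F(n_2)$ whenever $(n_1,n_2)=1$. The first is immediate: for $n=1$ the sum in \eqref{function_F} collapses to the single term $f((P_1(1),1),\ldots,(P_r(1),1))=f(1,\ldots,1)$, which equals $1$ because a multiplicative function of $r$ variables satisfies $f(1,\ldots,1)=1$ by definition.

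For the second property, I would fix coprime $n_1,n_2\in\N$ and invoke the Chinese remainder theorem: as $a$ runs over $\{1,\ldots,n_1n_2\}$, it is parametrized, via $a\equiv a_1\pmod{n_1}$ and $a\equiv a_2\pmod{n_2}$, by pairs $(a_1,a_2)$ with $a_1$ running over a complete residue system mod $n_1$ and $a_2$ over one mod $n_2$, independently. Two elementary observations then do the work. First, since $(n_1,n_2)=1$, for every integer $m$ one has $(m,n_1n_2)=(m,n_1)(m,n_2)$; I apply this with $m=P_i(a)$ for each $i$. Second, each $P_i$ has integer coefficients, so $P_i(a)\equiv P_i(a_1)\pmod{n_1}$, whence $(P_i(a),n_1)=(P_i(a_1),n_1)$, and likewise $(P_i(a),n_2)=(P_i(a_2),n_2)$. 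Combining these, the $i$-th argument of $f$ factors as $(P_i(a_1),n_1)\cdot(P_i(a_2),n_2)$.

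Since $(P_i(a_1),n_1)\mid n_1$ and $(P_i(a_2),n_2)\mid n_2$, the product $\prod_{i}(P_i(a_1),n_1)$ is coprime to $\prod_{i}(P_i(a_2),n_2)$, which is precisely the coprimality hypothesis under which the $r$-variable multiplicativity of $f$ may be invoked, giving
\[
f\bigl((P_1(a),n_1n_2),\ldots,(P_r(a),n_1n_2)\bigr)=f\bigl((P_1(a_1),n_1),\ldots,(P_r(a_1),n_1)\bigr)\,f\bigl((P_1(a_2),n_2),\ldots,(P_r(a_2),n_2)\bigr).
\]
Summing over the independent ranges of $a_1$ and $a_2$, and using that a sum of products of functions in separated variables splits as a product of sums, yields $F(n_1n_2)=F(n_1)F(n_2)$.

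I expect the only genuine point of care — the place a hasty reader might slip — to be the bookkeeping of coprimality: $(n_1,n_2)=1$ is used twice, once to split $(m,n_1n_2)$ as $(m,n_1)(m,n_2)$, and once to guarantee that the two $r$-tuples handed to $f$ meet the coprimality requirement in the definition of multiplicativity in several variables. Everything else reduces to the Chinese remainder theorem together with the fact that integer polynomials respect congruences. Specializing then recovers the cases used in Method I: taking $r=1$, $P_1(x)=x-1$, $f(m)=m$ gives the multiplicativity of $P(n)$, while taking $r=2$, $P_1(x)=x$, $P_2(x)=x-1$, and $f(u,v)=v$ if $u=1$ and $f(u,v)=0$ otherwise (which is easily checked to be multiplicative in two variables) gives that $M(n)$ is multiplicative.
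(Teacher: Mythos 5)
Your proof is correct and follows essentially the same route as the paper: the survey only records that Menon verified $F(n_1n_2)=F(n_1)F(n_2)$ for coprime $n_1,n_2$, and your CRT parametrization together with the splitting $(P_i(a),n_1n_2)=(P_i(a_1),n_1)\,(P_i(a_2),n_2)$ and the coprimality of the resulting $r$-tuples is exactly the argument the paper uses in Method I for the special case $M(n)$. Nothing to correct.
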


For the proof, he showed that for any $n_1,n_2\in \N$ with $(n_1,n_2)=1$ one has $F(n_1n_2)=F(n_1)F(n_2)$. Then, by applying Lemma
\ref{Lemma_Menon} in the case $r=2$, $P_1(x)=x-1$, $P_2(x)=x$ and $f(n_1,n_2)=f(n_1)E_0(n_2)$, where $E_0(n)=1$ for $n>1$, $E_0(1)=1$,
the following result is deduced.

\begin{theorem}[{\cite[Th.\ 1]{Men1965}}] \label{Th_1_Menon} Let $f$ be a multiplicative arithmetic function of a single variable. Then
\begin{equation} \label{Menon_f}
\sum_{\substack{a=1 \\ (a,n)=1}}^n f((a-1,n))= \prod_{p^\nu\mid \mid n} \left( \sum_{j=0}^\nu \varphi(p^{\nu-j})f(p^j)- 
p^{\nu-1} \right).
\end{equation}
\end{theorem}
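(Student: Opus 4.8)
The plan is to derive \eqref{Menon_f} from the multiplicativity statement of Lemma \ref{Lemma_Menon}, and then to evaluate the resulting multiplicative function of $n$ on prime powers by a direct counting argument, very much in the spirit of Method I.

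First I would encode the coprimality restriction as part of a two-variable multiplicative function. Let $E_0$ denote the arithmetic function with $E_0(1)=1$ and $E_0(m)=0$ for $m>1$, and set $g(m_1,m_2):=f(m_1)E_0(m_2)$. Since $f$ and $E_0$ are multiplicative, $g$ is a multiplicative function of two variables. Taking $P_1(x)=x-1$ and $P_2(x)=x$, one has
\begin{equation*}
\sum_{a=1}^n g\big((P_1(a),n),(P_2(a),n)\big)=\sum_{a=1}^n f((a-1,n))\,E_0((a,n))=\sum_{\substack{a=1\\ (a,n)=1}}^n f((a-1,n))=M_f(n),
\end{equation*}
because $E_0((a,n))$ equals $1$ exactly when $(a,n)=1$ and $0$ otherwise. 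Hence Lemma \ref{Lemma_Menon} applies with $r=2$ and shows that $n\mapsto M_f(n)$ is multiplicative.

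Next I would compute $M_f(p^\nu)$ for a prime power $p^\nu$, splitting the sum according to whether $p\mid a$:
\begin{equation*}
M_f(p^\nu)=\sum_{a=1}^{p^\nu} f((a-1,p^\nu))-\sum_{\substack{a=1\\ p\mid a}}^{p^\nu} f((a-1,p^\nu)).
\end{equation*}
In the first sum, substituting $a-1\mapsto a$ and grouping terms according to $(a,p^\nu)=p^j$ for $0\le j\le \nu$ — there being $\varphi(p^{\nu-j})$ values of $a$ with this gcd — gives $\sum_{j=0}^{\nu}\varphi(p^{\nu-j})f(p^j)$. In the second sum, writing $a=pb$ with $1\le b\le p^{\nu-1}$, one has $(pb-1,p^\nu)=1$, so $f((a-1,p^\nu))=f(1)=1$ and the sum equals $p^{\nu-1}$. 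Therefore $M_f(p^\nu)=\sum_{j=0}^{\nu}\varphi(p^{\nu-j})f(p^j)-p^{\nu-1}$, and taking the product over the prime power components of $n$ yields \eqref{Menon_f}. As a sanity check, with $f=\id$ the bracket becomes $P(p^\nu)-p^{\nu-1}=(\nu+1)(p^\nu-p^{\nu-1})$ by \eqref{Pillai_prime_power}, recovering \eqref{Menon_id}.

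The only genuinely nontrivial ingredient is the multiplicativity asserted in Lemma \ref{Lemma_Menon}, which we may assume; everything else is an elementary prime-power count together with the normalization $f(1)=1$. If one wanted a self-contained argument, the main obstacle would be proving that Lemma, i.e.\ that $F(n_1n_2)=F(n_1)F(n_2)$ whenever $(n_1,n_2)=1$: this rests on the Chinese Remainder Theorem, the factorization $(P_i(a),n_1n_2)=(P_i(a),n_1)(P_i(a),n_2)$, and the multiplicativity of $f$ in several variables, but checking that the reindexing $a\mapsto (a\bmod n_1,\,a\bmod n_2)$ is compatible with all the gcd conditions simultaneously requires some care.
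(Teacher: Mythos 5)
Your argument is correct and follows essentially the same route as the paper: multiplicativity of $M_f$ via Lemma \ref{Lemma_Menon} with $r=2$, $P_1(x)=x-1$, $P_2(x)=x$ and $f(n_1,n_2)=f(n_1)E_0(n_2)$, followed by the elementary prime-power evaluation that the paper leaves implicit. You even correct the paper's typo in the definition of $E_0$ (it should indeed be $E_0(m)=0$ for $m>1$, $E_0(1)=1$).
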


Here \eqref{Menon_f} is the same as identity \eqref{product_form_f}, deduced by different arguments, and it has been recovered by Sivaramakrishnan \cite[Th.\ 6.1]{Siv1978i} by the theory of $n$-even functions. 

By selecting $f(n)=n$ in Theorem \ref{Th_1_Menon} we have the original identity \eqref{Menon_id}. Some other particular choices of $f$ are also given. For example, see \cite[p.\ 159]{Men1965},
\begin{equation} \label{Menon_sigma}
\sum_{\substack{a=1 \\ (a,n)=1}}^n \sigma((a-1,n))= n \prod_{p^\nu\mid \mid n} \left(1+\nu-\frac1{p}\right),
\end{equation}
\begin{equation} \label{Menon_varphi}
\sum_{\substack{a=1 \\ (a,n)=1}}^n \varphi((a-1,n))= n \prod_{p^\nu \mid \mid n} \left(1+\nu - \frac{2\nu+1}{p}+ \frac{\nu-1}{p^2} \right).
\end{equation}

The case $f(n)=n^k$ has been treated by Sivaramakrishnan \cite{Siv1974}. It has been pointed out by T\u{a}rn\u{a}uceanu \cite[Cor.\ 2]{Tar2021arXiv}, applying the weighted form of the orbit counting lemma that
\begin{equation} \label{Menon_result_sigma}
\sum_{\substack{a=1 \\ (a,n)=1}}^n g((a-1,n))= \varphi(n)\sigma(n),
\end{equation}
where $g=\id \varphi*\1$, i.e., $g(n)=\sum_{d\mid n} d\varphi(d)$. Here \eqref{Menon_result_sigma} immediately follows from identity \eqref{Menon_f_convo_form}.

Menon also proved the following result, related to the gcd-sum function \eqref{Pillai}.

\begin{theorem}[{\cite[Th.\ 2]{Men1965}}] \label{Th_2_paper_Menon} Let $f$ be a multiplicative arithmetic function of a single variable. Then
\begin{equation} \label{id_a^2-a}
\sum_{a=1}^n f((a^2-a,n))= \prod_{p^\nu\mid \mid n} \left( 2 \sum_{j=0}^\nu \varphi(p^{\nu-j})f(p^j)-p^\nu \right).
\end{equation}
\end{theorem}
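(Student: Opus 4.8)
The plan is to mimic the structure of the proof of Theorem \ref{Th_1_Menon} but with the polynomial $P(x)=x^2-x$ in place of $x-1$. First I would invoke Lemma \ref{Lemma_Menon} with $r=1$ and $P_1(x)=x^2-x$ (no coprimality restriction is needed here, so there is no second polynomial), which immediately shows that $n\mapsto \sum_{a=1}^n f((a^2-a,n))$ is multiplicative in $n$. Hence it suffices to evaluate the sum at a prime power $p^\nu$ and then take the product over $p^\nu\mid\mid n$, matching the right-hand side of \eqref{id_a^2-a}.

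The core computation is therefore $\sum_{a=1}^{p^\nu} f((a^2-a,p^\nu)) = \sum_{a=1}^{p^\nu} f((a(a-1),p^\nu))$. The key observation is that $a$ and $a-1$ are consecutive integers, hence coprime, so $p$ divides at most one of them. This splits the range $1\le a\le p^\nu$ into three (overlapping only in an easily controlled way) parts: those $a$ with $p\nmid a$ and $p\nmid a-1$, those with $p\mid a$, and those with $p\mid a-1$. When $p\nmid a$ we have $(a(a-1),p^\nu)=(a-1,p^\nu)$, and when $p\nmid a-1$ we have $(a(a-1),p^\nu)=(a,p^\nu)$. Summing $f((a-1,p^\nu))$ over $a$ with $p\nmid a$ (i.e. $(a,p)=1$) gives exactly $M_f(p^\nu)$, which by \eqref{product_form_f}/\eqref{Menon_f} equals $\sum_{j=0}^\nu \varphi(p^{\nu-j})f(p^j)-p^{\nu-1}$; by the substitution $a\mapsto 1-a$ (a bijection mod $p^\nu$), summing $f((a,p^\nu))$ over $a$ with $p\nmid a-1$ gives the same quantity. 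The overlap — values of $a$ with both $p\mid a$ and $p\mid a-1$ — is empty, but one must be careful that in the first sum the terms with $p\mid a-1$ (there are $p^{\nu-1}$ of them, each contributing $f((a-1,p^\nu))=f(p^j)$ for appropriate $j$, in fact these reconstitute $\sum_{t=1}^{p^{\nu-1}} f((tp,p^\nu))$ type terms) have been included; so adding the two copies of $M_f(p^\nu)$ double-counts nothing of the coprime part but I must add back, exactly once, the contribution of the $a$ with $p\mid a$, namely $\sum_{p\mid a,\,1\le a\le p^\nu} f((a-1,p^\nu)\cdot\text{unit}) = \sum_{t=1}^{p^{\nu-1}} f(1)=p^{\nu-1}$ if $f$ is interpreted suitably — more precisely $(a(a-1),p^\nu)=(a,p^\nu)$ there, and $\sum_{t=1}^{p^{\nu-1}} f((tp,p^\nu))$ equals $\sum_{j=1}^{\nu}\varphi(p^{\nu-j})f(p^j) + $ the $j=0$ issue; I will need to bookkeep this term, call it $\Sigma$, and observe $\Sigma = \sum_{a=1}^{p^\nu}f((a,p^\nu)) - M_f(p^\nu) = (\id*f)(p^\nu) - M_f(p^\nu)$ is not quite what appears, so the cleaner route is the one below.

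A cleaner bookkeeping: write $\sum_{a=1}^{p^\nu} f((a(a-1),p^\nu))$ as $\sum_{(a,p)=1} f((a-1,p^\nu)) + \sum_{p\mid a} f((a,p^\nu))$, since the two index sets partition $\{1,\dots,p^\nu\}$ and on each set the stated simplification of the gcd holds. The first sum is $M_f(p^\nu) = \sum_{j=0}^\nu\varphi(p^{\nu-j})f(p^j) - p^{\nu-1}$ by Theorem \ref{Th_1_Menon}. For the second, $a=tp$ with $1\le t\le p^{\nu-1}$, and $(tp,p^\nu) = p\cdot(t,p^{\nu-1})$, so $\sum_{p\mid a}f((a,p^\nu)) = \sum_{t=1}^{p^{\nu-1}} f(p\cdot(t,p^{\nu-1}))$; grouping by $(t,p^{\nu-1})=p^{i}$ for $0\le i\le\nu-1$ gives $\sum_{i=0}^{\nu-1}\varphi(p^{\nu-1-i})f(p^{i+1}) = \sum_{j=1}^{\nu}\varphi(p^{\nu-j})f(p^j) = \sum_{j=0}^{\nu}\varphi(p^{\nu-j})f(p^j) - \varphi(p^\nu)f(1) = \sum_{j=0}^{\nu}\varphi(p^{\nu-j})f(p^j) - \varphi(p^\nu)$, using $f(1)=1$ for a multiplicative $f$. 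Adding the two contributions yields $2\sum_{j=0}^\nu\varphi(p^{\nu-j})f(p^j) - p^{\nu-1} - \varphi(p^\nu) = 2\sum_{j=0}^\nu\varphi(p^{\nu-j})f(p^j) - p^{\nu-1} - (p^\nu - p^{\nu-1}) = 2\sum_{j=0}^\nu\varphi(p^{\nu-j})f(p^j) - p^\nu$, which is precisely the local factor in \eqref{id_a^2-a}. Taking the product over $p^\nu\mid\mid n$ completes the proof.

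The main obstacle is purely combinatorial bookkeeping: correctly identifying that $\{1,\dots,p^\nu\}$ splits \emph{without overlap} into $\{p\nmid a\}$ and $\{p\mid a\}$, and that on the first set $(a(a-1),p^\nu)=(a-1,p^\nu)$ while on the second $(a(a-1),p^\nu)=(a,p^\nu)$ — both resting on the coprimality of $a$ and $a-1$ — and then recognizing the first partial sum as the already-known Menon-type sum $M_f(p^\nu)$ of Theorem \ref{Th_1_Menon}. Everything else is routine evaluation of $\id*f$ and $\varphi*f$ at prime powers. Note that, unlike \eqref{Menon_id}, there is no coprimality constraint on $a$ in \eqref{id_a^2-a}, which is what makes the second piece of the partition contribute a full $\id*f$-type term rather than vanishing.
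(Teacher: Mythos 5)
Your cleaner bookkeeping in the final paragraph is correct, and the identity checks out: the partition of $\{1,\dots,p^\nu\}$ into $\{a:p\nmid a\}$ and $\{a:p\mid a\}$ is exact, the gcd simplifications $(a(a-1),p^\nu)=(a-1,p^\nu)$ resp.\ $(a,p^\nu)$ rest correctly on the coprimality of $a$ and $a-1$, the first block is the Menon sum $M_f(p^\nu)=\sum_{j=0}^\nu\varphi(p^{\nu-j})f(p^j)-p^{\nu-1}$ of Theorem \ref{Th_1_Menon}, the second evaluates to $\sum_{j=1}^{\nu}\varphi(p^{\nu-j})f(p^j)$, and the two add to the stated local factor; multiplicativity via Lemma \ref{Lemma_Menon} with $r=1$, $P_1(x)=x^2-x$ then finishes the argument. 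The survey itself gives no proof of this theorem (it only cites Menon), but the route it indicates for such results is the same two-step scheme you follow: multiplicativity from Lemma \ref{Lemma_Menon} plus a prime-power evaluation; Menon's own computation would more likely take $r=2$, $P_1(x)=x$, $P_2(x)=x-1$ and use $(a^2-a,n)=(a,n)(a-1,n)$, whereas your reduction of the local computation to the already-proved sum $M_f(p^\nu)$ is a pleasant shortcut that reuses Theorem \ref{Th_1_Menon} instead of recomputing from scratch (one could equally get the same thing from the convolution formula \eqref{F_convo}). Your second paragraph is muddled (the talk of overlaps and of "adding back" terms is not needed, since the two index sets are disjoint from the outset), but you explicitly discard it in favour of the correct argument, so it does no harm; I would simply delete it.
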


Then he presented some of its corollaries. For example, see \cite[p.\ 161]{Men1965},
\begin{equation*} 
\sum_{a=1}^n \sigma((a^2-a,n))= n \tau(n^2).
\end{equation*}

Certain generalizations of \eqref{id_a^2-a} have been given by Venkatramaiah \cite{Ven1975}. 

Note that by the convolution method, similar to that given in Section \ref{Section_Method_IV}, for the function \eqref{function_F} one has
\begin{equation} \label{F_convo}
F(n)= n \sum_{d_1\mid n,\ldots, d_r\mid n} \frac{(\mu_r*_rf)(d_1,\ldots,d_r)}{[d_1,\ldots,d_r]}N(d_1,\ldots,d_r),
\end{equation}
valid for any function $f$ of $r$ variables, where $\mu_r$ is the $r$ variables M\"{o}bius function, defined by
$\mu_r(d_1,\ldots,d_r)=\mu(d_1)\cdots \mu(d_r)$, $*_r$ is the $r$ variables Dirichlet convolution, and
$N(d_1,\ldots,d_r)$ denotes the number of solutions (mod $[d_1,\ldots,d_r]$) of the simultaneous congruences
$P_1(x)\equiv 0$ (mod $d_1$), ..., $P_r(x)\equiv 0$ (mod $d_r$). If $f$ is multiplicative, then the convolution representation
\eqref{F_convo} shows that $F$ is also multiplicative, and its values can be computed for special choices of the polynomials $P_1(x), \ldots,
P_r(x)$. This is remarked by Haukkanen and T\'oth \cite[Remark\ 3.2]{HauTot2020}.

\subsection{Some more remarks}

The orbit counting lemma was also applied by Richards \cite{Ric1984} and Fung \cite{Fun1994} to deduce Menon's identity \eqref{Menon_id}. They do not cite the paper by Menon \cite{Men1965}. Richards \cite{Ric1984} also proved the following result: Let $G$ be a group of order $n$ and let
$c(G)$ denote the number of its cyclic subgroups. Then $c(G)\ge \tau(n)$ with equality if and only if $G$ is cyclic.

It was noted by Sivaramakrishnan \cite{Siv1969} that for every $n\in \N$,
\begin{equation} \label{id_b}
\sum_{a=1}^n b((a,n)) = \varphi(n)\tau(n),
\end{equation}
where $b(n) = \prod_{p^\nu \mid\mid n} (p^\nu-1)$, called Venkataraman's block-function in \cite{Siv1969}. We remark that 
$b(n)$ is identical to the unitary Euler function $\varphi^*(n)$, defined by Cohen \cite{Coh1960} as the number of integers 
$a$ such that $1\le a\le n$ and $(a,n)_*=1$, where $(a,n)_*=\max \{d\in \N: d\mid a, d\mid \mid n\}$.

In fact, if $f$ is an arbitrary function, then 
\begin{equation} \label{Pillai_f}
P_f(n):= \sum_{a=1}^n f((a,n)) =  \sum_{d\mid n} f(d) \varphi(n/d),
\end{equation}
the Dirichlet convolution of the function $f$ and the Euler function $\varphi$, which follows by grouping 
the terms according to the values $(a,n)=d$. If $f=\id$, then this reduces to identity \eqref{Pillai} 
regarding the Pillai function $P$. If $f$ 
is multiplicative, then $P_f$ is multiplicative as well. Now identity \eqref{id_b} is an easy consequence of 
\eqref{Pillai_f}.

Another direct consequence of \eqref{Pillai_f} is the polynomial identity
\begin{equation} \label{id_polynom}
\prod_{a=1}^n \left(x^{(a,n)}-1\right) =  \prod_{d\mid n} \left(x^d-1\right)^{\varphi(n/d)},
\end{equation}
valid for every $n\in \N$, which is formally obtained for $f(n)=\log (x^n-1)$, and may be compared to the well-known
identity
\begin{equation*} 
\Phi_n(x) =  \prod_{d\mid n} \left(x^d-1\right)^{\mu(n/d)}
\end{equation*}
concerning the cyclotomic polynomials $\Phi_n(x)$.

Menon's identity \eqref{Menon_id} and some of its generalizations are included in the following textbooks and monographs:
Freud and Gyarmati \cite[Ex.\ 6.1.16]{FG2020}, McCarthy \cite[Chs.\ 1,2]{McC1986}, Niven, Zuckerman and Montgomery \cite[Ex.\ 4.2.25]{NivZucMon1991}, Sivaramakrishnan \cite[Ch.\ VIII]{Siv1989}, \cite[Ch.\ 7]{Siv2007}.

\section{Generalizations and analogs}

\subsection{Identities for $f((a-1,n))$}

As already mentioned in Sections \ref{Section_Vaidya_class_div} and \ref{Sect_proof_Menon}, Menon \cite{Men1965} 
evaluated the sum
\begin{equation*}
M_f(n):= \sum_{\substack{a=1 \\ (a,n)=1}}^n f((a-1,n)).
\end{equation*}

In the case $f$ is an arbitrary arithmetic function, he deduced identity \eqref{id_by_Vaidya_method}, which may be compared 
to \eqref{Pillai_f}, concerning the gcd-sum function. If $f$ is a multiplicative function, then $M_f$ is also multiplicative, and Menon \cite{Men1965} obtained identity \eqref{Menon_f}.    

For an arbitrary function $f$, identity \eqref{Menon_f_convo_form} was proved by Sita Ramaiah \cite[Th. 9.1]{Sit1978} in a more general form, concerning regular systems of divisors (see Section \ref{Sect_Regular_convo}) and $k$-reduced residue systems (see Section \ref{Sect_general_gcd}).
For the special choices of $f=\sigma$ and $f=\varphi$ see identities \eqref{Menon_sigma} and \eqref{Menon_varphi}, respectively. Several other functions can be discussed. For example, if $f(n)=\omega(n)=\sum_{p\mid n} 1$, then one has
\begin{equation*}
M_{\omega}(n):= \sum_{\substack{a=1 \\ (a,n)=1}}^n \omega((a-1,n))= \varphi(n)  \sum_{p\mid n} \frac1{p-1},
\end{equation*}
and if $f(n)=\log (x^n-1)$, then one obtains
\begin{equation} \label{id_Menon_cyclot}
\prod_{\substack{a=1\\(a,n)=1}}^n \left(x^{(a-1,n)}-1\right) =  \prod_{d\mid n} \Phi_d(x)^{\varphi(n)/\varphi(d)},
\end{equation}
being a counterpart of identity \eqref{id_polynom}. We are not aware of any references for \eqref{id_polynom} and \eqref{id_Menon_cyclot}.

A slightly different type of identity is the following. Let $k,m_1,\ldots,m_k\in \N$, $M=[m_1,\ldots,m_k]$. T\'oth \cite{Tot2011} proved by number-theoretic 
arguments, as a special case of more general identities that
\begin{equation} \label{Menon_Toth}
\frac1{\varphi(M)} \sum_{\substack{a \text{ (mod $M$)}\\(a,M)=1}} (a-1,m_1)\cdots (a-1,m_k) = \sum_{d\mid m_1,\ldots, d_k\mid m_k} \frac{\varphi(d_1)\cdots \varphi(d_k)}{\varphi([d_1,\ldots,d_k])}.
\end{equation}

If $k=1$, then \eqref{Menon_Toth} reduces to Menon's identity \eqref{Menon_id}. Note that, according to the orbit counting lemma, 
\begin{equation*} 
\frac1{\varphi(q)} \sum_{\substack{a \text{ (mod $q$)}\\(a,q)=1}} (a-1,m_1)\cdots (a-1,m_k),
\end{equation*}
where $q=m_1\cdots m_k$, represents the number of cyclic subgroups of the group $\Cik_{m_1}\times \cdots 
\times \Cik_{m_k}$.

\subsection{Identities concerning $n$-even functions $f$}

The proof of the next identity only uses the definition of $n$-even functions and no further properties of them. 
It is given by T\'oth \cite[Th.\ 2.1]{Tot2018} in a more general form. 

\begin{theorem} \label{Th_even} 
Let $n\in \N$, $s\in \Z$ and let $f$ be an even function \textup{(mod $n$)}. Then
\begin{equation} \label{Menon_even}
\sum_{\substack{a \textup{ (mod $n$)}\\ (a,n)=1}} f(a-s) =  \varphi(n) \sum_{\substack{d\mid n\\ (d,s)=1}} \frac{(\mu*f)(d)}{\varphi(d)}. 
\end{equation}
\end{theorem}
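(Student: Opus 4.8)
The plan is to adapt Method IV almost verbatim, the only new ingredient being the full strength of Lemma \ref{Lemma_n_d} (with an arbitrary residue $x$ rather than $x=1$). Since $f$ is even (mod $n$), for every integer $a$ one has $f(a-s)=f((a-s,n))$, and because $(a-s,n)$ is a divisor of $n$ we may apply M\"obius inversion in the form $f(m)=\sum_{d\mid m}(\mu*f)(d)$, valid for $m\mid n$, to write
\begin{equation*}
f(a-s)=\sum_{d\mid (a-s,n)}(\mu*f)(d).
\end{equation*}
Here $\mu*f$ denotes the Dirichlet convolution; for divisors $d$ of $n$ its value depends only on the restriction of $f$ to divisors of $n$, so no ambiguity arises.

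First I would substitute this expansion into the left-hand side of \eqref{Menon_even} and interchange the order of summation:
\begin{equation*}
\sum_{\substack{a \text{ (mod $n$)}\\ (a,n)=1}} f(a-s)=\sum_{d\mid n}(\mu*f)(d)\sum_{\substack{a \text{ (mod $n$)}\\ (a,n)=1\\ d\mid a-s}}1 .
\end{equation*}
Since $d\mid n$, the condition $d\mid (a-s,n)$ is equivalent to $d\mid a-s$, i.e. to $a\equiv s\pmod d$; this is precisely the setting of Lemma \ref{Lemma_n_d} with $x=s$.

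Finally I would invoke Lemma \ref{Lemma_n_d}: the inner sum equals $\varphi(n)/\varphi(d)$ when $(s,d)=1$ and vanishes otherwise. Collecting the surviving terms gives
\begin{equation*}
\sum_{\substack{a \text{ (mod $n$)}\\ (a,n)=1}} f(a-s)=\varphi(n)\sum_{\substack{d\mid n\\ (d,s)=1}}\frac{(\mu*f)(d)}{\varphi(d)},
\end{equation*}
which is \eqref{Menon_even}. There is essentially no serious obstacle: the argument is pure bookkeeping, and the only subtlety is noticing that the coprimality constraint $(d,s)=1$ appearing in the answer is produced automatically by the general-$x$ case of Lemma \ref{Lemma_n_d}. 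As a sanity check, for $s=1$ every divisor satisfies $(d,1)=1$, and the identity collapses to \eqref{Menon_f_convo_form}, hence to Menon's identity \eqref{Menon_id} when $f=\id$.
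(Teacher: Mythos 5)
Your proposal is correct and follows essentially the same route as the paper's own proof: expand $f(a-s)=f((a-s,n))=\sum_{d\mid (a-s,n)}(\mu*f)(d)$ in the style of Method IV, interchange summations, and apply Lemma \ref{Lemma_n_d} with $x=s$, which supplies the condition $(d,s)=1$. No gaps; the argument matches the paper's proof of Theorem \ref{Th_even} step for step.
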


\begin{proof}[Proof of Theorem {\rm \ref{Th_even}}]
Since $f$ is an even function (mod $n$), for every $k\in \N$ we have
\begin{equation*} 
f(k) = f((k,n)) = \sum_{d\mid (k,n)} (\mu*f)(d),
\end{equation*}
and we follow Method IV:
\begin{equation*}
\sum_{\substack{a \text{ (mod $n$)}\\ (a,n)=1}} f(a-s) = 
\sum_{\substack{a=1\\ (a,n)=1}}^n \sum_{d\mid (a-s,n)} (\mu*f)(d) =
\sum_{\substack{a=1\\ (a,n)=1}}^n \sum_{\substack{d\mid a-s\\ d\mid n}} (\mu*f)(d)
\end{equation*}
\begin{equation*}
= \sum_{d\mid n} (\mu*f)(d) \sum_{\substack{a=1\\  (a,n)=1\\ a\equiv s\, \text{\rm (mod $d$)}}}^n 1.
\end{equation*}

By Lemma \ref{Lemma_n_d} the inner sum is $\varphi(n)/\varphi(d)$ if $(d,s)=1$ and $0$ otherwise, which proves identity \eqref{Menon_even}.
\end{proof}

If $f(a)=(a,n)$, then \eqref{Menon_even} gives
\begin{equation} \label{Menon_id_s}
\sum_{\substack{a \text{ (mod $n$)}\\ (a,n)=1}} (a-s,n) =  \varphi(n) \sum_{\substack{d\mid n\\ (d,s)=1}} 1, 
\end{equation}
which reduces to Menon's identity \eqref{Menon_id} for $s=1$. If $f(a)=c_n(a)$, the Ramanujan sum, then 
\eqref{Menon_even} gives the first identity of the formulas
\begin{equation} \label{Ramanujan_id_3_terms}
\sum_{\substack{a \text{ (mod $n$)} \\ (a,n)=1}} c_n(a-s) = \varphi(n) \sum_{\substack{d\mid n \\ (d,s)=1}}
\frac{d\mu(n/d)}{\varphi(d)} = \mu(n)c_n(s),
\end{equation}
the second one being the Brauer-Rademacher identity. See \cite[Ch.\ 2]{McC1986}. Compare \eqref{Ramanujan_id_3_terms} to \eqref{Ramanujan_sum_id}.

\subsection{Geometric and harmonic means of $(a-1,n)$ with $(a,n)=1$}

Menon's identity shows that $\tau(n)$ is the arithmetic mean of the values $(a-1,n)$, where $a$ runs over a reduced residue system (mod $n$).
The following identities are concerning the geometric and harmonic means of these values, denoted by $G(n)$ and $H(n)$, respectively. One has, for any $n\in \N$,
\begin{equation} \label{G}
G(n):= \biggl(\prod_{\substack{a=1\\ (a,n)=1}}^n (a-1,n)\biggr)^{1/\varphi(n)} =\prod_{p\in \P} p^{e_p(n)},
\end{equation}
where
\begin{equation*}
e_p(n) = \frac{p}{(p-1)^2}\left(1-\frac1{p^{\nu_p(n)}} \right)
\end{equation*}
and
\begin{equation} \label{H}
H(n):= \varphi(n) \biggl(\sum_{\substack{a=1\\ (a,n)=1}}^n \frac1{(a-1,n)} \biggr)^{-1} =
\prod_{p \in \P} \left(1-\frac{p}{p^2-1} \left( 1-\frac1{p^{2\nu_p(n)}} \right) \right)^{-1},
\end{equation}
using the notation $n=\prod_{p\in \P} p^{\nu_p(n)}$.

Identity \eqref{G} was obtained, in a more general form by Sita Ramaiah \cite[Cor.\ 9.3]{Sit1978}.
For the proof choose $f(n)=\log n$ in \eqref{Menon_f_convo_form} and use that $\mu*\log=\Lambda$, the von Mangoldt function. This gives
\begin{equation*}
\log G(n)= \sum_{\substack{a=1\\ (a,n)=1}}^n  \log \, (a-1,n) = \varphi(n) \sum_{d\mid n} \frac{\Lambda(d)}{\varphi(d)},
\end{equation*}
which is not multiplicative, but can be computed, leading to formula \eqref{G}.

To prove \eqref{H} choose $f(n)=1/n$ in \eqref{Menon_f_convo_form} and note that $(\mu*f)(p^\nu)=(1-p)/p^\nu$ for every prime power $p^\nu$ ($\nu \ge 1$).

\subsection{Identities using a polynomial $P$}

Richards \cite{Ric1984} stated without proof that for any polynomial $P$ with integer coefficients one has
\begin{equation} \label{Richards_id}
\sum_{\substack{a=1\\ (a,n)=1}}^n (P(a),n)=\varphi(n) \sum_{d\mid n} N_P(d),
\end{equation}
where $N_P(d)$ is the number of incongruent solutions $x$ of the congruence $P(x)\equiv 0$ (mod $d$) such that $(x,d)=1$.

A short proof is the following. By the Gauss formula $\sum_{d\mid n}\varphi(d)=n$ the left hand side of \eqref{Richards_id}, say $R$, is
\begin{equation*}
R= \sum_{\substack{a=1\\ (a,n)=1}}^n \sum_{d\mid (P(a),n)} \varphi(d) = \sum_{d\mid n} \varphi(d)
\sum_{\substack{a=1\\ (a,n)=1\\ P(a)\equiv 0 \text{ (mod $d$)}}}^n 1.
\end{equation*}

According to
Lemma \ref{Lemma_n_d}, to each such solution $x$ one correspond $\varphi(n)/\varphi(d)$ solutions $y$ (mod $n$) such that $(y,n)=1$. Hence
\begin{equation*}
R= \sum_{d\mid n} \varphi(d) \frac{\varphi(n)}{\varphi(d)}N_P(d)= \varphi(n) \sum_{d\mid n} N_P(d).
\end{equation*}

If $f(x)=x-s$ with $s\in \Z$, then \eqref{Richards_id} implies identity \eqref{Menon_id_s}.

In fact, we have the following more general result of which proof is similar.

\begin{theorem} If $f$ is an arbitrary function and $P(x)$ is any polynomial with integer coefficients, then
\begin{equation} \label{Richards_id_f}
M_{f,P}(n):=\sum_{\substack{a=1\\ (a,n)=1}}^n f(P(a),n)) = \varphi(n) \sum_{d\mid n} \frac{(\mu*f)(d)}{\varphi(d)} N_P(d).
\end{equation}
\end{theorem}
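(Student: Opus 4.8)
The plan is to imitate the short proof of \eqref{Richards_id} given above, and more generally Method IV of Section \ref{Section_Method_IV}; the only change is that the Gauss formula $n=\sum_{d\mid n}\varphi(d)$ gets replaced by the identity $f(m)=\sum_{d\mid m}(\mu*f)(d)$, valid for \emph{every} arithmetic function $f$ and every $m\in\N$ because $\1*(\mu*f)=(\1*\mu)*f=f$. First I would apply this with $m=(P(a),n)\ge 1$, insert it into the definition of $M_{f,P}(n)$, and interchange the order of summation:
\begin{equation*}
M_{f,P}(n)=\sum_{\substack{a=1\\(a,n)=1}}^n\sum_{d\mid (P(a),n)}(\mu*f)(d)
=\sum_{\substack{a=1\\(a,n)=1}}^n\sum_{\substack{d\mid P(a)\\ d\mid n}}(\mu*f)(d)
=\sum_{d\mid n}(\mu*f)(d)\sum_{\substack{a=1\\(a,n)=1\\ P(a)\equiv 0 \text{ (mod $d$)}}}^n 1.
\end{equation*}

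The main work is the evaluation of the inner sum $T_d$ for a fixed divisor $d$ of $n$. Since $P$ has integer coefficients, the condition $P(a)\equiv 0 \pmod d$ depends only on the residue class of $a$ modulo $d$, so I would group the $a$'s according to their residue $x$ modulo $d$. A class $x$ contributes only when $P(x)\equiv 0\pmod d$, and --- because $d\mid n$ together with $(a,n)=1$ forces $(a,d)=1$ --- only when $(x,d)=1$; there are precisely $N_P(d)$ such classes, by the definition of $N_P$. For each of them, Lemma \ref{Lemma_n_d} (applied with this residue $x$, which satisfies $(x,d)=1$) gives that the number of $a$ with $1\le a\le n$, $a\equiv x\pmod d$ and $(a,n)=1$ equals $\varphi(n)/\varphi(d)$. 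Hence $T_d=N_P(d)\,\varphi(n)/\varphi(d)$.

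Substituting back, I obtain
\begin{equation*}
M_{f,P}(n)=\sum_{d\mid n}(\mu*f)(d)\,N_P(d)\,\frac{\varphi(n)}{\varphi(d)}=\varphi(n)\sum_{d\mid n}\frac{(\mu*f)(d)}{\varphi(d)}\,N_P(d),
\end{equation*}
which is \eqref{Richards_id_f}. Taking $f=\id$, so that $\mu*f=\varphi$ and $(\mu*f)(d)/\varphi(d)=1$, recovers \eqref{Richards_id}; taking in addition $P(x)=x-s$ gives \eqref{Menon_id_s}, and $s=1$ returns \eqref{Menon_id}. I do not expect a genuine obstacle here: the only two points needing a little care are that $\1*(\mu*f)=f$ holds with no hypothesis whatsoever on $f$ (no multiplicativity, no regularity), and the bookkeeping that matches the coprimality requirement built into the definition of $N_P(d)$ with the hypothesis $(x,d)=1$ of Lemma \ref{Lemma_n_d}; both are routine.
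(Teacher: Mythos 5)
Your proof is correct and follows essentially the same route the paper indicates: it is exactly the short proof of \eqref{Richards_id} with the Gauss formula replaced by $f(m)=\sum_{d\mid m}(\mu*f)(d)$, together with Lemma \ref{Lemma_n_d} applied to the $N_P(d)$ admissible residue classes modulo $d$. No gaps; the handling of the coprimality condition matching the definition of $N_P(d)$ is done properly.
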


If $f$ is multiplicative, then $M_{f,P}$ is also multiplicative. The case of the polynomial $P(x)=x^j$ ($j\in \N$) and general identities concerning systems of polynomials have been discussed by T\'oth \cite{Tot2011}. 
 
\subsection{Identities concerning parameters $m,n,t$}

It was proved by Sivaramakrishnan \cite{Siv1969}, in a slightly different form, that
\begin{equation} \label{Menon_id_Siv}
M(m,n,t):= \sum_{\substack{a=1\\ (a,m)=1}}^t (a-1,n)=\frac{t\, \varphi(m)\tau(n)}{m} \prod_{p^\nu\mid \mid n_1}
\left(1-\frac{\nu}{(\nu+1)p} \right),
\end{equation}
where $m,n,t\in \N$ such that $m\mid t$, $n\mid t$ and $n_1=\max \{d\in \N: d\mid n, (d,m)=1\}$. If $m=n=t$, then
$M(n,n,n)=M(n)$, that is, \eqref{Menon_id_Siv} reduces to \eqref{Menon_id}. Sivaramakrishnan \cite{Siv1969} pointed 
out that if $t=[m,n]$, the least common multiple of $m$ and $n$, then $M(m,n,[m,n])$ is a multiplicative function of two variables. However, if $n\mid m$ and $t=m$, then it
follows from \eqref{Menon_id_Siv} that
\begin{equation} \label{Menon_n_divides_m}
\sum_{\substack{a=1\\ (a,m)=1}}^m (a-1,n) = \varphi(m) \tau(n).
\end{equation}

Identity \eqref{Menon_n_divides_m} was recently obtained by Jafari and Madadi \cite[Cor.\ 2.2]{JafMad2017}, as a corollary
of the following group theoretic result, without referring to the paper \cite{Siv1969}, by using the orbit counting lemma. Let $G$ be a finite group of order $m$ and let $n$ be a
divisor of $m$. Let $A_G(n)$ denote the number of
cyclic subgroups of $G$ of which orders divide $n$ and let $B_G(n)$ denote the number of solutions in $G$ of the equation $x^n = 1$. Then, see
\cite[Th.\ 2.1]{JafMad2017},
\begin{equation*}
A_G(n)= \frac1{\varphi(m)} \sum_{\substack{a=1\\ (a,m)=1}}^m B_G((a-1,n)).
\end{equation*}

A short direct proof of \eqref{Menon_n_divides_m} can be obtained, e.g., by Method IV:
\begin{equation*}
\sum_{\substack{a=1\\ (a,m)=1}}^m (a-1,n) = \sum_{\substack{a=1\\ (a,m)=1}}^m 
\sum_{d\mid (a-1,n)} \varphi(d) =
\sum_{d\mid n} \varphi(d) \sum_{\substack{a=1\\  (a,m)=1\\ a\equiv 1 \text{\rm (mod $d$)}}}^m 1.
\end{equation*}

Here $d\mid n$ and $n\mid m$ imply that $d\mid m$, and the inner sum is $\varphi(m)/\varphi(d)$ by Lemma \ref{Lemma_n_d}, with $m$ instead of $n$.

\subsection{Sums over $k$-reduced residue systems (mod $n^k$)} \label{Sect_general_gcd}

Let $k\in \N$ and let $(a,b)_k$ denote the greatest common $k$-th power divisor of $a,b\in \N$. A $k$-reduced residue system (mod $n^k$) is defined as the subset of elements $a$ of a complete residue system (mod $n^k$) such that $(a,n^k)_k=1$. If $k=1$, then this is the usual notion of a reduced residue system (mod $n$). 

The number of elements in a $k$-reduced residue system (mod $n^k$) is denoted by $\phi_k(n)$, function introduced by Cohen \cite{Coh1949}. Here $\phi_1(n)=\varphi(n)$. Also see Cohen \cite{Coh1958}.

Note that $\phi_k(n)=J_k(n)$, the Jordan function, for every $n,k\in \N$. A direct proof is the following: Consider the set
$A=\{a\in \N: 1\le a\le n^k \}$. Group the elements $a$ of $A$ according to the values $(a,n^k)_k=d^k$, where $d\mid n$.
Here $a=d^k b$, with $1\le b\le (n/d)^k$ and $(b,(n/d)^k)_k=1$. By the definition of the function $\phi_k$, the number of such elements $a$ is
exactly $\phi_k(n/d)$. Therefore, $n^k=\sum_{d\mid n} \phi_k(n/d)$. By M\"{o}bius inversion we deduce that $\phi_k(n)=\sum_{d\mid n} d^k \mu(n/d)=J_k(n)$.

Nageswara Rao \cite[Th.\ 2]{Nag1972} proved by using $n$-even functions that
\begin{equation*} 
\sum_{\substack{a \text{ (mod $n^k$)} \\ (a,n^k)_k=1}} (a-s,n^k)_k = \phi_k(n) \tau(n),
\end{equation*}
where $s\in \Z$, $(s,n^k)_k=1$. For $k=1$ one has Menon's identity.

\subsection{Identities with $(a,n)\in S$, where $S$ is a subset of $\N$}

Let $S$ be an arbitrary nonempty subset of $\N$. Define the function $\mu_S$ by $\mu_S*\1 =\varrho_S$, where
$\varrho_S$ is the characteristic function of $S$. That is,
\begin{equation*}
\sum_{d\mid n} \mu_S(n) = \begin{cases} 1, & \text{ if $n\in S$},\\ 0, & \text{ otherwise}.
\end{cases}    
\end{equation*}

Furthermore, let $\varphi_S(n)$ denote the number of integers $a$ (mod $n$) such that $(a,n)\in S$. It can be shown, 
see Cohen \cite[Cor.\ 4.1]{Coh1959}, that $\varphi_S=\mu_S*\id$, that is,
\begin{equation*}
\varphi_S(n)= \sum_{d\mid n} d \mu_S(n/d) \quad (n\in \N).    
\end{equation*}

If $S=\{1\}$, then $\mu_{\{1\}}=\mu$ and $\varphi_{\{1\}}=\varphi$ are the classical M\"obius function and Euler function, respectively. If $S$ is the set of squares, then $\mu_S=\lambda$ is the Liouville function and $\phi_S= \beta$, given by
\begin{equation*}
\beta(n)= \sum_{d\mid n} d\, \lambda(n/d),
\end{equation*}
which is identical to the alternating sum-of-divisors function. See the author \cite{Tot2013Sap}. Several other special choices of $S$ can be investigated. To mention a further one, if $S$ is the set of $k$-th power free integers, then $\varphi_S$ is the Klee function. See Klee \cite{Kle1948}.

If $S$ is an arbitrary subset, $n\in \N$ and $s\in \Z$ then
\begin{equation} \label{Menon_S_arbitrary}
\sum_{\substack{a \text{ (mod $n$)} \\ (a,n)\in S}} (a-1,n) = (\id * (\mu_S \times \varphi))(n),
\end{equation}
and 
\begin{equation} \label{Ramanujan_id_S}
\sum_{\substack{a \text{ (mod $n$)} \\ (a,n)\in S}} c(a-s,n) = \mu_S(n) c_n(s),
\end{equation}
which reduce to Menon's identity \eqref{Menon_id}, respectively \eqref{Ramanujan_sum_id} if $S=\{ 1\}$. 

A subset $S$ of $\N$ is called multiplicative if $1\in S$ and the characteristic function $\varrho_S$ is multiplicative. In this case
\begin{equation} \label{Menon_S_multipl}
\sum_{\substack{a \text{ (mod $n$)} \\ (a,n)\in S}} (a-1,n) = \prod_{p^\nu \mid\mid n} \left( P(p^\nu) +\varphi_S(p^\nu)-p^\nu \right),
\end{equation}
where $P$ is the gcd-sum function.

In particular, if $S$ is the set of squares, then we have
\begin{equation} \label{Menon_squares}
\sum_{\substack{a \text{ (mod $n$)} \\ (a,n) \text{ is a square}}} (a-1,n) = \prod_{p^\nu \mid\mid n} \left( P(p^\nu) - \beta(p^{\nu-1}) \right)
\end{equation}
and 
\begin{equation} \label{Ramanujan_squares}
\sum_{\substack{a \text{ (mod $n$)} \\ (a,n) \text{ is a square}}} c(a-s,n) = \lambda(n) c_n(s).
\end{equation}

Identities \eqref{Ramanujan_id_S} and \eqref{Menon_S_multipl} have been deduced by Haukkanen \cite[Th.\ 3.1, 3.2]{Hau1997} by finite Fourier-expansions. They can be proved by different arguments as well, e.g., by Method V, which also leads to the general identity \eqref{Menon_S_arbitrary}, to be compared to \eqref{Menon_convo}. 
In the case when $S$ is the set of squares, identity \eqref{Menon_squares} was obtained by
Sivaramakrishnan \cite[Th.\ 6.2]{Siv1979}, and identity \eqref{Ramanujan_squares} by Haukkanen and Sivaramakrishnan
\cite[Sect.\ 4]{HauSiv1991}.

A Menon-type identity related to Klee's arithmetic function was discussed by Chandran, Thomas, and Namboothiri \cite{CTNCzech}.

\subsection{Unitary analogs}

Let $(a,b)_*$ denote the greatest divisor of $a$ which is a unitary divisor of $b$.
Nageswara Rao \cite{Nag1966} proved via the ,,unitary'' version of Method X that
\begin{equation} \label{Menon_id_unit}
 \sum_{\substack{a \text{ (mod $n$)} \\ (a,n)_*=1}} (a-1,n)_* = \varphi^*(n) \tau^*(n),
\end{equation}
where $\varphi^*(n)=\prod_{p^\nu \mid \mid n} (p^\nu-1)$ is the unitary Euler function and $\tau^*(n)=\sum_{d\mid \mid n} 1$ is the unitary divisor function.
Note that $\tau^*(n)=2^{\omega(n)}$, where $\omega(n)=\sum_{p\mid n} 1$.

Short direct proofs can be given by Methods I-V as well, using properties of unitary functions. 
Some related identities are 
\begin{equation} \label{Menon_unit_usual}
 \sum_{\substack{a \text{ (mod $n$)} \\ (a,n)_*=1}} (a-1,n) = \prod_{p^\nu \mid\mid n} \left((\nu+1)p^\nu-\nu p^{\nu-1}-1 \right),
\end{equation}
\begin{equation} \label{Menon_usual_unit}
 \sum_{\substack{a \text{ (mod $n$)} \\ (a,n)=1}} (a-1,n)_* = \prod_{p^\nu \mid\mid n} \left(2p^\nu-p^{\nu-1}-1 \right),
\end{equation}
where \eqref{Menon_unit_usual} is a special case of identity \eqref{Menon_Hau}, and we are not aware of any reference for \eqref{Menon_usual_unit}.

\subsection{Regular systems of divisors} \label{Sect_Regular_convo}

Let $A(n)$ be a subset of the set of positive divisors of $n$ for every $n\in \N$. 
The $A$-convolution of the functions
$f, g : \N \to \C$ is defined by
\begin{equation*}
(f *_A g)(n) = \sum_{d\in A(n)} f(d)g(n/d) \quad (n\in  \N).
\end{equation*}

Let ${\cal A}$ denote the set of arithmetic functions $f:\N \to \C$. The system $A = (A(n))_{n\in \N}$ of divisors is called regular, cf. Narkiewicz \cite{Nar1963}, if the following conditions hold true:

(a) $({\cal A},+, *_A)$ is a commutative ring with unity,

(b) the $A$-convolution of multiplicative functions is multiplicative,

(c) the constant $1$ function has an inverse $\mu_A$ (generalized M\"{o}bius function)
with respect to $*_A$ and $\mu_A(p^\nu)\in  \{-1, 0\}$ for every prime power $p^\nu$ ($\nu \ge  1$).

It can be shown that the system $A = (A(n))_{n\in \N}$ is regular if and only if

(i) $A(mn) = \{de : d \in A(m), e\in  A(n)\}$ for every $m, n\in \N$ with $(m, n) = 1$,

(ii) for every prime power $p^\nu$ ($\nu \ge 1$) there exists a divisor $t = t_A(p^\nu)$ of $\nu$, called
the type of $p^\nu$ with respect to $A$, such that
\begin{equation*}
A(p^{it}) = \{1, p^t, p^{2t},\ldots , p^{it}\}
\end{equation*}
for every $i\in \{0, 1,\ldots, \nu/t\}$.

Examples of regular systems of divisors are $A = D$, where $D(n)$ is the set
of all positive divisors of $n$ and $A = U$, where $U(n)$ is the set of unitary divisors of $n$.
For every prime power $p^\nu$ ($\nu\ge 1$) one has $t_D(p^\nu) = 1$ and $t_U(p^\nu) = \nu$. Here $*_D$ and $*_U$ are the Dirichlet convolution and
the unitary convolution, respectively. 

For properties of regular convolutions and related arithmetical functions we refer to Narkiewicz \cite{Nar1963}, McCarthy \cite{McC1968,McC1986} and Sita Ramaiah \cite{Sit1978}.

Let $\tau_A(n)=\sum_{d\in A(n)}1 $ denote the corresponding divisor function and let
\begin{equation*}
\varphi_A(n) = \sum_{\substack{a \text{ (mod n)} \\ (a,n)_A=1}} 1,
\end{equation*}
be the generalized Euler function, where
\begin{equation*}
(a,n)_A = \max \{d\in \N: d\mid a, d\in A(n)\}.
\end{equation*}

If $A$ is a regular system of divisors, then one has
\begin{equation*} 
 \sum_{\substack{a \text{ (mod $n$)} \\ (a,n)_A=1}} (a-1,n)_A = \varphi_A(n) \tau_A(n),
\end{equation*}
proved by Sita Ramaiah \cite{Sit1978}, in a more general form, which recovers identities \eqref{Menon_id} 
and \eqref{Menon_id_unit} in the cases $A=D$ and $A=U$, respectively.  Further generalizations with respect to $n$-even functions of several variables were given by Haukkanen and McCarthy \cite{HauMcC1991}. 

\subsection{A generalized divisibility relation}

Haukkanen \cite{Hau1999} defined a generalized divisibility relation on $\N$ as follows. For every prime $p$ let $f_p:\N \to \N\cup \{0\}$ be a 
function and let $f = (f_p)_{p\in \P}$ be the system of these functions. We say that $\wr$ is a divisibility relation of type $f$ if

1) $1 \, \wr \, n$ for all $n\in \N$,

2) for every prime $p$ and $\nu\in \N \cup \{0\}$, if $d \, \wr \, p^\nu$, then $d = p^i$ for some $i \in \{0, 1, \ldots, \nu\}$,

3) $d \, \wr \, n$ if and only if $p^{\nu_p(d)} \, \wr \, p^{\nu_p(n)}$ for all primes $p$, where $d =\prod_{p\in \P} p^{\nu_p(d)}$ and
$n = \prod_{p\in \P} p^{\nu_p(n)}$ are the canonical factorizations of $d$ and $n$,

4) for every prime $p$ and $\nu \in \N$,  $f_p(\nu)$ is the smallest integer $i \in \{1, 2, \ldots, \nu \}$
such that $p^i \, \wr \, p^\nu$ if such $i$ exists, and $f_p(\nu) = 0$ otherwise.

It is clear that if $d \, \wr \, n$, then $d \mid n$. The usual divisibility relation $\mid$
is a divisibility relation of type $(\1, \1, \ldots)$, where $\1(\nu)  = 1$ for all $\nu \in \N$ and all $p\in \P$.
The unitary divisibility relation $\mid \mid$
is a divisibility relation of type $(\id, \id, \ldots)$, where $\id(\nu)  = \nu$ for all $\nu \in \N$ and all $p\in \P$.
More generally, if $A$ is a regular system of divisors (see Section \ref{Sect_Regular_convo}), then 
the $A$-divisibility relation $\mid_A$, where $d\mid_A n$ if and only if $d\in A(n)$ is a divisibility relation of type
$f=(f_p)$ with $f_p(\nu)= t_A(p^\nu)$, the type of $p^\nu$. Some more examples were also given in \cite{Hau1999}. 

Let $\wr$ be a divisibility relation of type $f$. We denote $(m, n)_{\wr} = \max\{d : d\mid m, d\, \wr \, n\}$. The generalized Euler function
$\phi_{\wr}(n)$ is defined as the number of integers $a$ (mod $n$) such that $(a,n)_{\wr}=1$.

Haukkanen \cite[Cor.\ 4.1]{Hau2005} proved that  for every $n\in \N$,
\begin{equation} \label{Menon_Hau}
 \sum_{\substack{a \text{ (mod $n$)} \\ (a,n)_{\wr}=1}} (a-1,n) = \phi_{\wr}(n) \sum_{d\mid n} \frac{\varphi(d)n_d}{d\phi_{\wr}(n_d)},
\end{equation}
where $\varphi$ is the classical Euler's function and $n_d=\prod_{p^\nu \mid \mid n, \, p\mid d} p^\nu$.

Schwab and Bede \cite{SchBed2014} considered the ”odd-divisibility” relation $\mid_{\otimes}$
defined by $d \mid_{\otimes} n $ if and only if $d$ is odd and $d\mid n$. They refer to the paper by 
Haukkanen \cite{Hau2005}, remark that this is a divisibility relation of type $(\theta,\1,\1,\ldots)$, where
$\theta(\nu)=0$ ($\nu \in \N$), and as an application of \eqref{Menon_Hau} deduce that
for every $n\in \N$,
\begin{equation*}
\sum_{\substack{a \text{ (mod $n$)} \\ (a,n)_{\otimes}=1}} (a-1,n) = \varphi_{\otimes}(n) \left(\tau(n)-\frac1{2}\tau_2(n)\right),
\end{equation*}
where $(a,n)_{\otimes}$ denotes the greatest common odd divisor of $a$ and $n$, $\varphi_{\otimes}(n)$ is the 
number of integers $a$ (mod n) such that $(a,n)_{\otimes}=1$, $\tau(n)$ is the number of divisors of $n$, 
and $\tau_2(n)$ is the number of even divisors of $n$.

\subsection{Regular integers (mod $n$)}

An integer $a$ is called regular (mod $n$), if there exists an integer $x$ such that $a^2x\equiv a$ (mod $n$), i.e., the residue class of $a$
is a regular element (in the sense of J. von Neumann) of the ring $\Z_n$ of residue classes (mod $n$).

Let $n>1$ be an integer with prime factorization $n=p_1^{\nu_1}\cdots p_r^{\nu_r}$. It can be shown that the following three statements are equivalent: (i) $a\in \N$ is regular (mod $n$), (ii) $(a,n)\mid \mid n$,
(iii) for every $i\in \{1,\ldots,r\}$ either $p_i\nmid a$ or $p_i^{\nu_i}\mid a$.

Let $\Reg_n=\{a: 1\le a\le n$, $a$ is regular (mod $n$)$\}$ and let $\varrho(n)=\# \Reg_n$ denote the number of regular integers $a$ (mod $n$) such that $1\le a\le n$. One has $\varrho(n)=\sum_{d\mid\mid n} \varphi(d)$. See T\'oth \cite{Tot2008} for more properties.

Apostol and T\'oth \cite[Prop.\ 5.3.1]{ApoTot2015} proved that
\begin{equation*}
\sum_{a \in  \Reg_n} (a-1,n)= \sum_{d\mid \mid n} \varphi(d)\tau(d) = \prod_{p^\nu \mid \mid n} 
\left(p^{\nu-1}(p-1)(\nu+1)+1 \right).
\end{equation*}

\subsection{Multiple sums}

Nageswara Rao \cite[Th.\ 1]{Nag1972} proved by the theory of $n$-even functions that
\begin{equation} \label{Nageswara_id}
\sum_{\substack{a_1,\ldots,a_k=1\\ (a_1,\ldots, a_k,n)=1}}^n
(a_1-s_1,\ldots,a_k-s_k,n)^k = J_k(n) \tau(n) \quad (k,n\in
\N),
\end{equation}
where $s_1,\ldots,s_k\in \Z$, $(s_1,\ldots,s_k,n)=1$ and
$J_k(n)=n^k\prod_{p\mid n} (1-1/p^k)$ is the Jordan function of
order $k$. 

Haukkanen and Sivaramakrishnan \cite[Th.\ 9]{HauSiv1994} deduced, also by using $n$-even functions that  
\begin{equation}  \label{HauSiv_id}
\sum_{\substack{a_1,\ldots,a_k=1\\ (a_1,\ldots, a_k,n)=1}}^n
(a_1-1,\ldots,a_k-1,n) = J_k(n) \sum_{d\mid n} \frac1{\varepsilon_k(d)} \quad (k,n\in
\N),
\end{equation}
where $\varepsilon_k(d)=J_k(d)/\varphi(d)$ is the number of cyclic subgroups of order $d$ of the group $\Z_d \times \cdots \times \Z_d$ with $k$ factors.

Sury \cite{Sur2009} used the orbit counting lemma to prove the following identity:
\begin{equation} \label{Sury_id}
\sum_{\substack{a_1,a_2,\ldots,a_k=1\\ \gcd(a_1,n)=1}}^n (a_1-1,a_2,\ldots,a_k,n)= \varphi(n) \sigma_{k-1}(n) \quad (k,n\in \N),
\end{equation}
where $\sigma_s(n)=\sum_{d\mid n} d^s$. 

For $k=1$ all of \eqref{Nageswara_id}, \eqref{HauSiv_id} and \eqref{Sury_id} recover Menon's identity.

Direct proofs of these and other similar identities (see, e.g., \cite{Ven1973}) can be given by Method IV, for example. 
To deduce \eqref{Nageswara_id} write
\begin{equation*}
T:= \sum_{\substack{a_1,\ldots,a_k=1\\ (a_1,\ldots, a_k,n)=1}}^n (a_1-s_1,\ldots,a_k-s_k,n)^k 
\end{equation*}
\begin{equation*}
=\sum_{\substack{a_1,\ldots,a_k=1\\ (a_1,\ldots, a_k,n)=1}}^n \sum_{d\mid (a_1-s_1,\ldots,a_k-s_k,n)} J_k(d) =
\end{equation*}
\begin{equation*}
= \sum_{d\mid n} J_k(d) \sum_{\substack{a_j\equiv s_j \text{ (mod $d$)}\\ 1\le j\le k }}
\sum_{\delta \mid (a_1,\ldots,a_k,n)=1} \mu(\delta)
\end{equation*}\(\)
\begin{equation*}
= \sum_{d\mid n} J_k(d) \sum_{\delta \mid n} \mu(\delta) \sum_{\substack{1\le b_j\le n/\delta\\ \delta b_j\equiv s_j \text{ (mod $d$)}\\
1\le j\le k }} 1,
\end{equation*}
where the linear congruences $\delta b_j\equiv s_j$ (mod $d$) ($1\le j\le k$) have solutions $b_j$ if and only if $(d,\delta)\mid s_j$ ($1\le j\le k$),
equivalent to $(d,\delta)=1$, since $(d,\delta)\mid n$ and $(s_1,\ldots,s_k,n)=1$ by the given condition. If $(d,\delta)=1$ is satisfied, then
each congruence has one solution (mod $d$) and $n/(d\delta)$ solutions (mod $n/\delta)$. Hence
\begin{equation*}
T= \sum_{d\mid n} J_k(d) \sum_{\substack{\delta \mid n\\ (d,\delta)=1}} \mu(\delta) \left(\frac{n}{d\delta} \right)^k
= n^k \sum_{d\mid n} \frac{J_k(d)}{d^k} \sum_{\substack{\delta \mid n\\ (d,\delta)=1}} \frac{\mu(\delta)}{\delta^k}
\end{equation*}
\begin{equation*}
= n^k \sum_{d\mid n} \frac{J_k(d)}{d^k} \frac{J_k(n)/n^k}{J_k(d)/d^k}= J_k(n) \tau(n),
\end{equation*}
completing the proof of \eqref{Nageswara_id}.

T\'oth \cite[Prop.\ 2.7]{Tot2013} proved by arithmetic arguments that for every $n,k\in \N$,
\begin{equation*}
\sum_{\substack{a_1,\ldots, a_k=1\\ (a_1\cdots a_k,n)=1}}^n (a_1\cdots a_k-1,n)= \varphi(n)^k \tau(n),
\end{equation*}
which reduces to Menon's original identity for $k=1$.

Another multidimensional generalization of Euler's arithmetic function and of Menon's identity has been obtained by T\'oth \cite{Tot2020}. For $k\in \N$ let define the function $\varphi_k(n)$ as
\begin{equation*} 
\varphi_k(n):= \sum_{\substack{a_1,\ldots, a_k=1 \\ (a_1\cdots a_k,n)=1\\ (a_1+\cdots+a_k,n)=1}}^n 1,
\end{equation*}
where $\varphi_1(n)=\varphi(n)$ is Euler's function.

Let $f$ be an arbitrary arithmetic function. Then for every $k,n\in \N$,
\begin{equation} \label{Menon_gen_f}
\sum_{\substack{a_1,\ldots, a_k=1 \\ (a_1\cdots a_k,n)=1 \\ (a_1+\cdots+a_k,n)=1}}^n f((a_1+\cdots +a_k-1,n)) = \varphi_k(n)\sum_{d\mid n} \frac{(\mu \ast f)(d)}{\varphi(d)}.
\end{equation}

If $f=\id$, then 
\begin{equation} \label{Menon_gen_k}
\sum_{\substack{a_1,\ldots, a_k=1 \\ (a_1\cdots a_k,n)=1 \\ (a_1+\cdots+a_k,n)=1}}^n (a_1+\cdots +a_k-1,n) = \varphi_k(n)\tau(n).
\end{equation}

In the case $k=2$ \eqref{Menon_gen_k} has been proved by Sita Ramaiah \cite[Cor.\ 10.4]{Sit1978} in the 
general setting of regular convolutions and $k$-reduced residue systems.

\subsection{Identities for arithmetic functions of several variables}

Let $f$ be an arithmetic function of $k$ variables. Haukkanen and T\'oth \cite{HauTot2020} considered
(in a more general form) sums of type 
\begin{equation*}
\sum_{\substack{1\le a_1\le n_1\\ (a_1,n_1)=1}} \cdots \sum_{\substack{1\le a_k\le n_k\\ (a_k,n_k)=1}}
f((a_1-1,n_1), \ldots,(a_k-1,n_k)).
\end{equation*}

In fact, they deduced common generalizations of the Menon-type identities by
Sivaramakrishnan \cite{Siv1969} and Li, Kim, Qiao \cite{LKQ2019}. In particular, an explicit formula was proved in the case $f(n_1,\ldots,n_k)=[n_1,\ldots,n_k]$, the lcm function. If $n_1=\cdots =n_k=n$, then it gives (see \cite[Cor.\ 4.2]{HauTot2020}): For any $n,k\in \N$,
\begin{equation*}
\sum_{\substack{1\le a_1\le n\\ (a_1,n)=1}} \cdots \sum_{\substack{1\le a_k\le n\\ (a_k,n)=1}}
[(a_1-1,n), \ldots,(a_k-1,n)]
\end{equation*}
\begin{equation*}
=  \varphi(n)^k \prod_{p^\nu \mid \mid  n }
\left( 1+k\nu + \sum_{j=1}^{k-1} (-1)^j \binom{k}{j+1} \frac{p^j}{(p-1)^j(p^j-1)} \left(1-\frac1{p^{\nu j}}\right) \right).
\end{equation*}

If $k=1$ we recover Menon's identity \eqref{Menon_id}.

See the paper by Haukkanen and Wang \cite{HW1996} for a general identity with respect to a set of polynomials.

\subsection{Identities concerning Dirichlet characters} \label{Sect_id_Dirichlet_char}

Zhao and Cao \cite{ZhaCao} proved that
\begin{equation} \label{Menon_id_char}
\sum_{a=1}^n (a-1,n) \chi(a)= \varphi(n)\tau(n/d),
\end{equation}
where $\chi$ is a Dirichlet character (mod $n$) with conductor $d$ ($n\in \N$, $d\mid n$). 
If $\chi$ is the principal character (mod $n$), that is
$d=1$, then \eqref{Menon_id_char} reduces to Menon's identity \eqref{Menon_id}.
If $\chi$ is a primitive character (mod $n$), i.e., $d=n$, then \eqref{Menon_id_char} gives
\begin{equation} \label{Menon_id_char_primit}
\sum_{a=1}^n (a-1,n) \chi(a)= \varphi(n).
\end{equation}

Note that the sums in \eqref{Menon_id_char} and \eqref{Menon_id_char_primit} are over $(a,n)=1$, since $\chi(a)=0$ for $(a,n)>1$.  Zhao and Cao \cite{ZhaCao} first proved formula \eqref{Menon_id_char_primit} and then deduced identity \eqref{Menon_id_char} by
using the fact that every Dirichlet character is induced by a primitive character. They showed
that the left hand sides of \eqref{Menon_id_char} and \eqref{Menon_id_char_primit} are multiplicative in $n$ and computed their values for prime powers.

Generalizations of \eqref{Menon_id_char} involving $n$-even functions have been deduced by T\'oth \cite{Tot2018}, using a different approach based on direct manipulations of the
corresponding sums, valid for any integer $n\in \N$.  We quote here only the following result, having a simple proof. See \cite[Th.\ 2.7]{Tot2018}. Let $\chi$ be a primitive Dirichlet character (mod $n$),
where $n\in \N$. Let $f$ be an even function (mod $n$) and  let $s\in \Z$. Then
\begin{equation*}
\sum_{k=1}^n f(k-s) \chi(k) = (\mu*f)(n) \chi(s).
\end{equation*}

For the proof, use that for every even function $f$,
\begin{equation*} 
f(k) = f((k,n)) = \sum_{d\mid (k,n)} (\mu*f)(d),
\end{equation*}
and that for a  primitive character $\chi$ (mod $n$), and for any $d\mid n$, $d<n$ and $s\in \Z$,
\begin{equation*}
\sum_{\substack{k=1\\ k\equiv s \text{ (mod $d$)} }}^n \chi(k)=0.
\end{equation*}

We can deduce
\begin{equation*}
\sum_{k=1}^n f(k-s)\chi(k)= \sum_{k=1}^n \chi(k) \sum_{e\mid (k-s,n)} (\mu*f)(e)
\end{equation*}
\begin{equation*}
=\sum_{e\mid n} (\mu*f)(e) \sum_{\substack{k=1\\ k\equiv s \text{\rm (mod $e$)}}}^n  \chi(k) =  (\mu*f)(n)\chi(s).
\end{equation*}

Further generalizations have been given by Chandran, Namboothiri, Tho\-mas \cite{CNT2021}, Chen \cite{Che2019}, Li, Hu, Kim 
\cite{LHK2018,LiHuKim2019Taiw}, T\'oth \cite{Tot2019}.

\subsection{Identities concerning additive characters}

We use the notation $e(x)=e^{2\pi i x}$ and $n=\prod_{p\in \P} p^{\nu_p(n)}$. For every $n\in \N$, $k\in \Z$, 
\begin{equation*}
S(n,k):= \sum_{a=1}^n (a-1,n)\, e(ak/n) = e(k/n) \sum_{d\mid (n,k)} d \varphi(n/d),
\end{equation*}
being an identity related to the discrete Fourier transform (DFT) of the gcd function. See Lemma
\ref{Lemma_gcd_exp}.

Li and  Kim \cite{LiKim2018} proved that for every $n\in \N$, $k\in \Z$ such that
$\nu_p(n)-\nu_p(k)\ne 1$ for every prime $p\mid n$ one has
\begin{equation*} 
S^*(n,k):= \sum_{\substack{a=1\\(a,n)=1}}^n (a-1,n)\, e(ak/n) = e(k/n) \varphi(n) \tau((n,k)),
\end{equation*}
which reduces to Menon's identity \eqref{Menon_id} in the case $n\mid k$. 
Furthermore, Li and Kim \cite[Th.\ 3.5]{LiKim2018} established a formula for $S^*(n,k)$,
valid for every $n\in \N$ and $k\in \Z$. This was generalized and proved using a different method by T\'oth \cite{Tot2020Arab}.

\subsection{Identities with respect to sets of units}

Let $\Z_n^*$ denote the group of units of the ring $\Z_n$. Let $S$ be an arbitrary nonempty subset of the group $\Z_n^*$ and let $\widehat{\Z_n^*}$ 
denote the character group of $\Z_n^*$.
Cai\'uve and Miguel \cite{CaiMig} proved, that for every $n\in \N$,
\begin{equation} \label{Menon_S_Miguel}
M_S(n):= \sum_{a\in S} (a-1,n) = \sum_{\chi \in \widehat{\Z_n^*}} \tau(n/d_{\chi}) \sum_{a\in S} \overline{\chi}(a),      
\end{equation}
where $d_{\chi}$ is the conductor of the character $\chi$, and $\overline{\chi}$ stands for the conjugate of $\chi$. If $S=\Z_n^*$, then
\eqref{Menon_S_Miguel} reduces to Menon's identity \eqref{Menon_id} by using \eqref{sum_values_charact}.

Here is the proof: According to relation \eqref{charct_orthog},
\begin{equation*} 
\delta_x(a):= \frac1{\varphi(n)} \sum_{\chi\in \widehat{Z_n^*}} \overline{\chi}(x) \chi(a) = \begin{cases} 1, & \text{ if $x\equiv a$ (mod $n$)}, \\ 0,  & \text{ otherwise},
\end{cases}  
\end{equation*}
hence
\begin{equation*} 
\delta_S(a):= \sum_{x\in S} \delta_x(a)=  \begin{cases} 1, & \text{ if $a\in S$}, \\ 0,  & \text{ otherwise},
\end{cases}  
\end{equation*}
and deduce that
\begin{equation*} 
M_S(n) = \sum_{a\in \Z_n^*} (a-1,n) \delta_S(a) = \sum_{a\in \Z_n^*} (a-1,n) \sum_{x\in S} \delta_x(a)  
\end{equation*}
\begin{equation*}
=\frac1{\varphi(n)} \sum_{a\in \Z_n^*} (a-1,n) \sum_{x\in S}  \sum_{\chi \in \widehat{\Z_n^*}} \overline{\chi}(x) \chi(a) 
\end{equation*}
\begin{equation*}
=\frac1{\varphi(n)} \sum_{x\in S}  \sum_{\chi \in \widehat{\Z_n^*}} \overline{\chi}(x) \sum_{a\in \Z_n^*} (a-1,n)  \chi(a). 
\end{equation*}

Now application of the result \eqref{Menon_id_char} by Zhao and Cao \cite{ZhaCao} finishes the proof of
\eqref{Menon_S_Miguel}.

We note that Cai\'uve and Miguel \cite{CaiMig} proved \eqref{Menon_S_Miguel} in a more general form, namely for multiple sums
and several subsets of $\Z_n^*$, and investigated the case when these subsets are subgroups of $\Z_n^*$.   

\subsection{Identities derived from group actions by the orbit counting lemma}

Another kind of generalizations of Menon’s identity, based on applying the orbit counting lemma to certain
group actions have been obtained by T\u{a}rn\u{a}\-ucea\-nu \cite{Tar2012JNT}, Li and Kim \cite{LiKim2017Lin}. Namely, they considered certain
subgroups of $\GL_r(\Z_n)$ acting on $\Z_n^r$. For example, taking the subgroup of the upper triangular matrices in $\GL_r(\Z_n)$, 
Li and Kim \cite[Th.\ 2.4]{LiKim2017Lin} proved the following result.

If $n,r\in \N$, then
\begin{equation} \label{Menon_id_LiKim}
\sum_{a_{ij}} \prod_{k=1}^r (d_k/d_{k-1},n) = n^{r(r-1)/2} \varphi(n)^r \tau_{r+1}(n),   
\end{equation}
where the sum is over all $a_{ij}$ ($1\le i,j\le r$) such that $1\le a_{ij}\le n$ ($1\le i,j\le r$) and $(a_{ii},n)=1$ ($1\le i\le r$), 
$d_k$ is the gcd of all $k\times k$ minors of the matrix 
\begin{equation*}
\begin{pmatrix}
a_{11}-1 & a_{12} & \ldots & a_{1r} \\
0 & a_{22}-1 & \ldots & a_{2r} \\
\vdots & \vdots & \ddots & \vdots \\
0 & 0 & \ldots & a_{rr}-1
\end{pmatrix}    
\end{equation*}
for $k\ge 1$, $d_0=1$, by convention $0/0=0$ for $d_k/d_{k-1}$, and $\tau_{r+1}(n) = \sum_{d_1\cdots d_rd_{r+1}=n} 1$ is the Piltz divisor function. 
Note that this corrects the corresponding result obtained by T\u{a}rn\u{a}uceanu \cite{Tar2012JNT}. If $r=1$, then \eqref{Menon_id_LiKim} reduces to
Menon's identity \eqref{Menon_id}.

Some other Menon-type identities have been obtained by Wang and Ji \cite{WJ2019A} and T\u{a}rn\u{a}uceanu \cite{Tar2020Austr}, by using the action of subgroups of $U(\Z_n)$ on the set $\Z_n$,
and the action of the power automorphism group of $G$ on $G$, where $G$ is a finite abelian group, respectively.

\subsection{Identities concerning subsets of the set $\{1,\ldots,n\}$}

For a nonempty subset $A$ of $\{1,2,\ldots, n\}$ let $(A)$
denote the gcd of the elements of $A$. The subset $A$ is said to be
relatively prime if $(A)=1$, i.e., the elements of $A$ are
relatively prime. Let $f(n)$ denote the number of relatively prime
subsets of $\{1,2,\ldots, n\}$. For every $n\in \N$ one has
\begin{equation} \label{f_relprime}
f(n)=\sum_{d=1}^n \mu(d)\left(2^{\lfloor n/d\rfloor}-1\right),
\end{equation}
where $\lfloor x \rfloor$ is the floor function of $x$. 

A similar formula is valid for the number $f_k(n)$ of relatively prime
$k$-subsets (subsets with $k$ elements) of $\{1,2,\ldots, n\}$. Namely, for every $n,k\in \N$ ($k\le n$),
\begin{equation} \label{f_relprime_k}
f_k(n)=\sum_{d=1}^n \mu(d) \binom{\lfloor n/d\rfloor}{k}.
\end{equation}

Furthermore, consider the Euler-type functions $\Phi(n)$ and
$\Phi_k(n)$, representing the number of nonempty subsets $A$ of
$\{1,2,\ldots, n\}$ and $k$-subsets $A$ of $\{1,2,\ldots, n\}$,
respectively, such that $(A)$ and $n$ are relatively prime. Observe that
$\Phi_1(n)=\varphi(n)$ is Euler's function. One has $\Phi(1)=1$, 
\begin{equation*} 
\Phi(n)=\sum_{d\mid n} \mu(d) 2^{n/d} \quad (n\in \N, n>1),
\end{equation*}
and for every $n,k\in \N$ ($k\le n$),
\begin{equation*} 
\Phi_k(n)=\sum_{d\mid n} \mu(d) \binom{n/d}{k}.
\end{equation*}

The functions $f$, $f_k$, $\Phi$ and $\Phi_k$ have been defined and investigated by Nathan\-son
\cite{Nat2007}. Also see T\'oth \cite{Tot2010Integers} and its references.

Define the sum $\overline{M}(n)$ by
\begin{equation*} 
\overline{M}(n):= \sum_{\substack{\emptyset \ne A \subseteq \{1,2,\ldots,n\} \\ ((A),n)=1}} ((A)-1,n),
\end{equation*}
taken over all nonempty subsets of $\{1,2\ldots,n\}$ such that $(A)$ and $n$ are relatively prime,  
where $((A)-1,n)$ denotes the gcd of $(A)-1$ and $n$. 
Also, for $1\le k\le n$ let
\begin{equation*} 
\overline{M}_k(n):= \sum_{\substack{A \subseteq \{1,2,\ldots,n\} \\ \# A =k \\ ((A),n)=1}} ((A)-1,n),
\end{equation*}
the sum being over the $k$-subsets of $\{1,2,\ldots,n\}$ such that $(A)$ and $n$ are relatively prime.
Observe that the sums $\overline{M}(n)$ and $\overline{M}_k(n)$ have $\Phi(n)$, respectively $\Phi_k(n)$ terms.

T\'oth \cite{Tot2021} proved the following identities. For every $n,k\in \N$,
\begin{equation*}
\overline{M}(n)=  \sum_{d\mid n} \varphi(d) \sum_{\substack{\delta \mid n\\ (\delta,d)=1}} \mu(\delta) \sum_{\substack{j=1\\ \delta j \equiv 1 
\textup{ (mod $d$)}}}^{n/\delta} 
f\left(\left\lfloor \frac{n}{j \delta}\right\rfloor\right),
\end{equation*}
\begin{equation} \label{overline_M_k}
\overline{M}_k(n)=  \sum_{d\mid n} \varphi(d) \sum_{\substack{\delta \mid n\\ (\delta,d)=1}} \mu(\delta) \sum_{\substack{j=1\\ \delta j \equiv 1 
\textup{ (mod $d$)}}}^{n/\delta} 
f_k\left(\left\lfloor \frac{n}{j \delta}\right\rfloor\right),
\end{equation}
where the functions $f$ and $f_k$ are given by \eqref{f_relprime} and \eqref{f_relprime_k}, respectively.

Note that if $k=1$, then $f_1(n)=1$ ($n\in \N$), the last sum in \eqref{overline_M_k} is $n/(d\delta)$, and this quickly leads to Menon's identity \eqref{Menon_id}. 
See the proof of Lemma \ref{Lemma_n_d}.

\subsection{Identities in Dedekind domains}

All generalizations and analogs discussed above are in the setting of 
rational integers. Miguel \cite{Mig2014,Mig2016} considered residually finite Dedekind domains, that is, Dedekind domains $\fD$ 
such that for every non-zero ideal $\fn$ of $\fD$, the residue class ring ${\fD}/{\fn}$ is finite. 
Let $N(n)=|{\fD}/{\fn}|$ denote the norm of the ideal $\fn$. Let $\varphi_{\fD}$ denote the generalized  
Euler function defined as 
\begin{equation*}
\varphi_{\fD}({\fn}) = \begin{cases} 1, & \text{ if $\fn={\fD}$},\\
|U({\fD}/\fn)|, & \text{ otherwise}.
\end{cases}    
\end{equation*}

Furthermore, let $\tau_{\fD}(n)= \sum_{\fd \mid \fn} 1$ be the corresponding divisor function. 
Note that $\varphi_{\fD}$
and $\tau_{\fD}$ are finite valued functions.
Miguel \cite[Th.\ 1.1]{Mig2014} proved by applying Burnside's lemma that for every non-zero ideal 
$\fn$ in ${\fD}$,
\begin{equation} \label{Menon_res_Dedekind}
\sum_{a\in \U({\fD}/{\fn})} N(\langle a-1 \rangle + \fn)  = \varphi_{\fD}(n) \tau_{\fD}(\fn),
\end{equation}
which reduces to Menon's identity \eqref{Menon_id} in the case ${\cal D}=\Z$. 

Miguel \cite[Th.\ 1.1]{Mig2016} also deduced, again by Burnside's lemma, the generalization of Sury's identity 
\eqref{Sury_id}. A further generalization was given by Li and Kim \cite{LiKim2017Ded}.

Identity \eqref{Menon_res_Dedekind} has been generalized with respect to regular systems of divisors and $k$-reduced residue systems
by Wang, Zhang and Ji \cite{WZJ2019B}. The Sita Ramaiah identity, which is the case $k=2$ of \eqref{Menon_gen_k} has been generalized
in the setting of residually finite Dedekind domains with respect to regular systems of divisors and $k$-reduced residue systems 
by Ji and Wang \cite{JW2020}. Also see Wang, Hu, and Ji \cite{WHJ2019} for some different results deduced by the orbit counting lemma,
and Chen and Zheng \cite{CheZhe2021} for some further generalizations.

\subsection{Identities in the ring of algebraic integers}

Let $K$ be an algebraic number field with the ring of integers denoted by ${\cal O}_K$. Then ${\cal O}_K$ is a residually finite Dedekind domain.
Let $\fn$ be a non-zero ideal of ${\cal O}_K$ and let $\chi$ be a character (mod $\fn$) with conductor $\fd$. 
Wang and Ji \cite[Cor.\ 1.3]{WJ2020} proved, as a corollary of more general results, that 
\begin{equation} \label{Menon_number_field}
 \sum_{a\in U({\cal O}_K/{\fn})} N(\langle a-1 \rangle +\fn ) \chi(a) = \varphi(\fn) \tau(\fn/\fd),
\end{equation}
where $N$, $\varphi$ and $\tau$ are the corresponding norm, Euler and divisor functions, respectively. If $\chi$ is the principal character, then 
\eqref{Menon_number_field} reduces to identity \eqref{Menon_res_Dedekind} in the case of algebraic integers, and if $K=\Q$, then \eqref{Menon_number_field} recovers \eqref{Menon_id_char}.

Identities \eqref{Menon_gen_f} and \eqref{Menon_gen_k} by T\'oth have been generalized by Sarkar \cite{Sar2021arXiv} to the ring of algebraic integers ${\cal O}_K$ and 
involving Dirichlet characters. The case $k=2$, has been earlier investigated by Wang and Ji \cite{WJ2020Raman},
Chattopadhyay and Sarkar \cite{ChaSar2020arXiv}.

\subsection{Final remarks}

Extensions of Menon-type identities, in particular of the general identity by T\'oth \cite{Tot2019}, to the polynomial ring over the a finite field $\F_q$ 
have been obtained Chen and Qi \cite{CQ2021}.

Some further papers related to Menon-type identities, not mentioned above in this survey are \cite{CHL2018arXiv, Che2019kinai, HauWan1997, Li2021Miskolc, MS2022, 
Nat2021, Venkataraman1974, Ven1984, Ven1997}.

\end{document}